\newtheorem{theorem}{Theorem}
\newtheorem{corollary}[theorem]{Corollary}
\newtheorem{definition}[theorem]{Definition}
\newtheorem{lemma}[theorem]{Lemma}
\newtheorem{proposition}[theorem]{Proposition}
\newtheorem{remark}[theorem]{Remark}
\newenvironment{proof}[1][Proof]{\noindent\textbf{#1.} }{\ \rule{0.5em}{0.5em}}
\begin{document}

\title{A metaplectic perspective of uncertainty principles in the Linear Canonical Transform domain}
\author{Nuno Costa Dias\footnote{Corresponding author; ncdias@meo.pt}\\University of Lisbon (GFM)
\and Maurice de Gosson\footnote{maurice.de.gosson@univie.ac.at}\\University of Vienna, Faculty of Mathematics (NuHAG)
\and Jo\~ao Nuno Prata\footnote{joao.prata@mail.telepac.pt}\\University of Lisbon (GFM)}
\maketitle

\begin{abstract}
We derive Heisenberg uncertainty principles for pairs of Linear Canonical Transforms of a given function, by resorting to the fact that these transforms are just metaplectic operators associated with free symplectic matrices. The results obtained synthesize and generalize previous results found in the literature, because they apply to all signals, in arbitrary dimension and any metaplectic operator (which includes Linear Canonical Transforms as particular cases). Moreover, we also obtain a generalization of the Robertson-Schr\"odinger uncertainty principle for Linear Canonical Transforms. We also propose a new quadratic phase-space distribution, which represents a signal along two intermediate directions in the time-frequency plane. The marginal distributions are always non-negative and permit a simple interpretation in terms of the Radon transform. We also give a geometric interpretation of this quadratic phase-space representation as a Wigner distribution obtained upon Weyl quantization on a non-standard symplectic vector space. Finally, we derive the multidimensional version of the Hardy uncertainty principle for metaplectic operators and the Paley-Wiener theorem for Linear Canonical Transforms.
\end{abstract}

{\bf Keywords}: Uncertainty principles; Linear canonical transforms; Metaplectic operators; Quantum phase-space distributions; Symplectic methods


\section{Introduction}

One of the fundamental results in harmonic analysis is the uncertainty principle. In simple terms it states that a function and its Fourier transform cannot be both sharply localized. It was originally stated by the physicist Werner Heisenberg \cite{Heisenberg} in the context of quantum mechanics in terms of some loosely defined concepts of noise and disturbance for an x-ray experiment. \textit{Wave mechanics} was proposed independently by Schr\"odinger. In this context the wave function is interpreted as the \textit{probability amplitude} for the position of a particle. The associated probability amplitude for the momentum of the particle was posited to be the Fourier transform of the wave function. This paved the way of the first mathematical statement of the uncertainty principle, which was proved by Weyl \cite{Weyl}, Kennard \cite{Kennard} and Robertson \cite{Robertson}. For any $L^2 $ function $f$, we have:
\begin{equation}
\left(\int_{\mathbb{R}} (x-a)^2 |f(x)|^2 dx \right) \cdot \left(\int_{\mathbb{R}} (\xi-b)^2 |\widehat{f}( \xi)|^2 d \xi \right) \geq \frac{\|f\|_2^4}{16 \pi^2} ~,
\label{eqIntro1}
\end{equation} 
for all $a,b \in \mathbb{R}$ and where $\widehat{f} $ denotes the Fourier transform of $f$, and $\|f\|_2$ is the $L^2$ norm. If we set
\begin{equation}
a= \langle x \rangle = \frac{\int_{\mathbb{R}} x |f(x)|^2 dx}{\|f\|_2^2}~, ~b= \langle \xi \rangle = \frac{\int_{\mathbb{R}} 
\xi |\widehat{f}( \xi)|^2 d\xi}{\|f\|_2^2}~,
\label{eqIntro2}
\end{equation} 
we obtain from (\ref{eqIntro1}):
\begin{equation}
\Delta x^2 \Delta \xi^2 \geq \frac{\|f\|_2^4}{16 \pi^2} ~,
\label{eqIntro3}
\end{equation} 
where the position and momentum (or time and frequency) spreads are given by:
\begin{equation}
\Delta x^2 = \int_{\mathbb{R}} (x-\langle x \rangle )^2 |f(x)|^2 dx~, ~\Delta \xi^2 = \int_{\mathbb{R}} (\xi-\langle \xi \rangle )^2 |\widehat{f}(\xi)|^2 d\xi~.
\label{eqIntro4}
\end{equation} 
One can think of the Fourier transform as a process by which the time-frequency content of a signal is rotated by $90^o$ from the time to the frequency axis. This interpretation will become clear later. Certain optical systems can rotate a signal by an arbitrary angle. Mathematically this is implemented by the \textit{fractional Fourier transform} (FRFT) \cite{Chen,GoLuFrac,Namias,Ozaktas}.

The FRFT is one of several transforms called \textit{linear canonical transforms} (LCT) which are widely used in optics and signal processing \cite{Bastiaans}. 

A LCT is a singular integral operator of the form
\begin{equation}
\mathcal{L}_S \left[f \right] (\xi)= \widehat{f}_S (\xi)= \frac{1}{\sqrt{2\pi i b}} \int_{\mathbb{R}} f(x) e^{\frac{i\pi}{b}\left(d \xi^2
+ a x^2 -2 x \xi\right)} dx~,
\label{eqIntro5}
\end{equation} 
for $f \in \mathcal{S} (\mathbb{R})$ and $a,c,d \in \mathbb{R}$, $b \in \mathbb{R} \backslash \left\{0 \right\}$.

The subscript $S$ stems from the fact that every LCT is associated to a (free) symplectic matrix
\begin{equation}
S= \left(
\begin{array}{c c}
a &b\\
c & d
\end{array}
\right)~.
\label{eqIntro6}
\end{equation} 
Notice that if we choose $a=d=0$ and $b=-c=1$, we recover (up to multiplicative constant) the Fourier transform. The associated symplectic matrix
\begin{equation}
J=\left(
\begin{array}{c c}
0 &1\\
-1 & 0
\end{array}
\right)~,
\label{eqIntro7}
\end{equation} 
also known as the \textit{standard symplectic matrix}, implements the rotation by $90^o$ in the time-frequency plane that we mentioned before:
\begin{equation}
\left(
\begin{array}{c c}
0 &1\\
-1 & 0
\end{array}
\right)~\left(
\begin{array}{c}
x\\
\xi
\end{array}
\right)=\left(
\begin{array}{c}
\xi\\
-x
\end{array}
\right)~.
\label{eqIntro8}
\end{equation} 
In that respect, the uncertainty principle (\ref{eqIntro3}) can be regarded as one of an infinite number of uncertainty principles expressing the impossibility of simultaneously localizing sharply the representation of a signal along two distinct directions in the time-frequency plane. More specifically, for two distinct symplectic matrices $S^{(1)}$ and $S^{(2)}$ $\left(S^{(1)} \neq \pm S^{(2)} \right)$ associated with the LCTs $\widehat{f}_{S^{(1)}} (x)$ and $\widehat{f}_{S^{(2)}} (\xi)$, respectively, one can anticipate that there exists $C>0$, such that (cf. the definition of $\Delta_{S^{(2)}} \xi^2$ in eq.(\ref{eq9})):
\begin{equation}
\Delta_{S^{(1)}} x^2 \Delta_{S^{(2)}} \xi^2 \geq C \|f\|_2^4~. 
\label{eqIntro9}
\end{equation} 
The constant $C$ can only depend on $S^{(1)}$, $S^{(2)}$ (and on the dimension in the higher-dimensional case). The uncertainty principle (\ref{eqIntro3}) would correspond to the case $S^{(1)}=I$ and $S^{(2)}=J$.

Various uncertainty principles have been proposed in one dimension \cite{Dang,Guanlei1,Guanlei2,Guanlei3,Kou,Ozaktas,Sharma,Yang,Zang,Zhao1,Zhao2} and in two dimensions \cite{Ding}. However, they can be only considered as partial results: they do not address the general $n$ dimensional case, some of them only consider real signals, or only a particular LCT, or else the constant in (\ref{eqIntro9}) is not really a constant as it depends on other quantities such as $\Delta x^2$ or $\Delta \xi^2$.

Here we derive an uncertainty principle of the form (\ref{eqIntro9}) which is valid in arbitrary dimension, for real or complex signals, and such that the constant $C$ depends only on $S^{(1)},~S^{(2)}$ and $n$ (see eq.(\ref{eq31})) and is optimal. 

Moreover, we shall take full advantage of the properties of the metaplectic group. Indeed, LCTs are just metaplectic operators which project onto free symplectic matrices. Using the symplectic covariance property of Weyl operators, we will derive a simple and compact formula for the uncertainty principle (\ref{eqIntro9}) in the most general case. In fact the uncertainty principles obtained are valid not just for LCT but for more general metaplectic transformations as well.

Uncertainty principles of the form (\ref{eqIntro3},\ref{eqIntro9}) do not account for the correlations $x-\xi$. If one performs a linear transformation of the observables $(x, \xi) \mapsto M (x, \xi)$, $M \in \operatorname*{Gl}(2n,\mathbb{R})$, the uncertainty principle is immediately changed. It is well known that one can obtain a stronger version of the uncertainty principle (\ref{eqIntro3}) known as the \textit{Robertson-Schr\"odinger uncertainty principle}, which is invariant under affine linear symplectic transformations. In the present work, we will derive a Robertson-Schr\"odinger version of the uncertainty principle (\ref{eqIntro9}) which is also invariant under a certain kind of linear transformations.

The strategy of our approach resides in the definition of a new kind of time-frequency representation. The Wigner-Ville function $W_{\sigma} f(x, \xi)$ of a signal $f$ is commonly assumed to represent the time-frequency content of the signal. It is not a joint probability density for time and frequency as it may (and in general it does) take on negative values. However it does have the correct marginals\footnote{Provided that $f, \widehat{f} \in L^1 (\mathbb{R}) \cap L^2 (\mathbb{R})$.}
\begin{equation} 
\int_{\mathbb{R}} W_{\sigma} f(x, \xi) d \xi= | f(x)|^2~, ~\int_{\mathbb{R}} W_{\sigma} f(x, \xi) d x= | \widehat{f}(\xi)|^2~,
\label{eqIntro10}
\end{equation} 
and permits the evaluation of the various moments (under suitable conditions on $f$) , 
\begin{equation}
\langle \xi^k \rangle=\int_{\mathbb{R}}\xi^k |\widehat{f} (\xi)|^2 d \xi~, \hspace{0.5 cm}\langle x^k \rangle=\int_{\mathbb{R}}x^k |f (x)|^2 d x~,~k \in \mathbb{N}
\label{eqIntro10A}
\end{equation} 
with the following suggestive formulae\footnote{See also eq.(\ref{eqReview25}).}:
\begin{equation} 
\langle \xi^k \rangle =\int_{\mathbb{R}}\int_{\mathbb{R}} \xi^k W_{\sigma} f(x, \xi) dx d \xi~, ~\langle x^k \rangle =\int_{\mathbb{R}}\int_{\mathbb{R}} x^k W_{\sigma} f(x, \xi) d x d \xi~.
\label{eqIntro11}
\end{equation} 
In the same vein, we will introduce a new distribution, $W_{\vartheta} f(x, \xi)$, for the \textit{rotated} variables $x$, $\xi$ associated with symplectic matrices $S^{(1)}$, $S^{(2)}$, such that
\begin{equation}
\langle x^k \rangle =\int_{\mathbb{R}}\int_{\mathbb{R}} x^k W_{\vartheta} f(x, \xi) d x d\xi~, ~ \langle \xi^k \rangle =~\int_{\mathbb{R}} \int_{\mathbb{R}} \xi^k W_{\vartheta} f(x,\xi) dx d \xi ~.
\end{equation}
Moreover:
\begin{equation} 
\int_{\mathbb{R}} W_{\vartheta} f(x,\xi) d \xi= | \widehat{f}_{S^{(1)}}(x)|^2~, ~\int_{\mathbb{R}} W_{\vartheta} f(x,\xi) d x= | \widehat{f}_{S^{(2)}} (\xi)|^2~.
\label{eqIntro12}
\end{equation} 
The last identities hold because the integrals on the left-hand side coincide with the symplectic Radon transform of the Wigner distribution $W_{\sigma} f(x, \xi)$ \cite{Gosson1}.
The subscripts $\sigma$ and $\vartheta$ will become clear in the sequel. In terms of the distribution $W_{\vartheta} f(x,\xi)$ we also compute the covariance matrix and obtain the uncertainty principle (\ref{eqIntro9}) in a straightforward way.

The reader familiar with time-frequency distributions, should be advised that this distribution is not in the so-called Cohen class \cite{Cohen}. However, in certain cases, it can be regarded as \textit{linear perturbation of the Wigner distribution} in the spirit of \cite{Cordero}. We also show that it emerges naturally as the result of using a process of \textit{Weyl quantization} on a non-standard symplectic vector space. The non-standard symplectic form $\vartheta$ is determined by the symplectic matrices $S^{(1)},~S^{(2)}$. The methods used in this approach were developed by the authors in \cite{Dias1,Dias2,Dias3,Dias4,Dias5} in the context of a deformation of ordinary quantum mechanics known as \textit{noncommutative quantum mechanics}.

Finally, we also consider the generalizations of the Hardy uncertainty principle \cite{Hardy} and the Paley-Wiener theorem \cite{Wiener} to the linear canonical transform domain. Recall that, in loose terms, Hardy's uncertainty principle precludes the possibility of a function $f$ and its Fourier transform $\widehat{f}$ decaying faster than a Gaussian of minimal uncertainty. We prove that a similar result holds for the pair $\widehat{f}_{S^{(1)}}$ and $\widehat{f}_{S^{(2)}}$. 

The Paley-Wiener theorem states that a function $f$ is of compact support if and only if its Fourier transform $\widehat{f}$ extends to an entire function of a certain exponential type. Again, we extend this result to the pair $\widehat{f}_{S^{(1)}}$ and $\widehat{f}_{S^{(2)}}$. 

\vspace{0.3 cm}
Here is a brief summary of our results.

\begin{itemize}
\item We derive the Heisenberg uncertainty principle for any pair of metaplectic operators in Theorem \ref{TheoremHUPLCT} and Corollary \ref{Corollary1}. The novelty of our result are the facts that it is valid in arbitrary dimension, for all metaplectic operators (not just LCT), and that the optimal constant is expressed in terms of the full symplectic matrices that determine the metaplectic operators.

\item We obtain a counterpart of the Robertson-Schr\"odinger uncertainty principle for metaplectic operators which takes into account all elements of the covariance matrix and not just the variances (Theorem \ref{TheoremRobertsonSchrodinger}). To the best of our knowledge, this is a new result in the multidimensional case.

\item We obtain the set of linear transformations that leave the previous Robertson-Schr\"odinger uncertainty principle invariant (Proposition \ref{Proposition1}). In this context they play the same r\^ole as the symplectic transformations in the case of the Fourier transform. Moreover, they are important for quantum optics, because they can be easily implemented experimentally (through mirrors, beam spliters, phase shifters). 

\item We define a new bilinear distribution in the LCT-phase space associated with two arbitrary metaplectic operators (Definition \ref{DefinitionNewBilinearDist}), and give an interpretation in terms of the Weyl quantization on a suitable non-standard symplectic vector space. This new distribution does not belong to Cohen's class (Theorem \ref{TheoremCohenClass}) but it may be a linear perturbation of the Wigner function in some cases (Theorem \ref{TheoremLinearPerturbation}). It permits us to compute the elements of the covariance matrix associated with a pair of metaplectic operators in very simple terms (Lemma \ref{LemmaCovarianceMatrix}). It yields the correct marginals for the two metaplectic operators (Theorem \ref{TheoremMarginal}), it establishes a promising connection with quantum tomography (Remark \ref{RemarkRadon}) and provides the means to define a new infinite family of Cohen classes of bilinear distributions - one Cohen class for each symplectic form (Remark \ref{RemarkNewCohenClass}). 

\item We prove a Hardy uncertainty principle for LCT in one dimension (Theorem \ref{TheoremHardyLCT2}, Corollary \ref{CorollaryHardy1}) and use it to prove the multidimensional version (Theorem \ref{TheoremMultidimensionalHardy}, Corollary \ref{CorollaryMultidimensionalHardyLCT}).

\item We prove a version of the Paley-Wiener theorem for the LCT  (Theorem \ref{TheoremPaleyWiener1}).
\end{itemize}

\section*{Notation}

The transpose of a matrix $A$ is $A^T$. We denote by $X_{j,k}$ the $(j,k)$-th entry of a matrix $X$. An element of the time-frequency plane (sometimes also called phase space) $\mathbb{R}^{2n}= \mathbb{R}_x^n \times \mathbb{R}_{\xi}^n$ is usually denoted by $z=(x, \xi)$, where (depending on the context) $x$ and $\xi$ may be time and frequency of a signal, or position and momentum of some quantum mechanical particle. We will reserve the Greek letters $\xi , \eta \in \mathbb{R}^n$ for the variable after some linear canonical transform. Given a symmetric $n \times n$ matrix $M$, we shall interchangeably use the notations $x \cdot M x$ or $Mx^2$ for the quadratic form $\sum_{j,k=1}^n M_{j,k}x_jx_k$.

We shall use Greek letters $\alpha, \beta = 1, \cdots, 2n$ for time-frequency (phase space) indices and Latin letters $j,k= 1, \cdots, n$ for time or frequency indices.

Given some probability density $\mu$ defined on $\mathbb{R}^n$,
\begin{equation}
\mu(x) \geq 0~, ~ \int_{\mathbb{R}^n} \mu(x) dx = 1~.
\label{eqNotation1}
\end{equation}
we define the expectation value of some function $f(x)$ as
\begin{equation}
\langle f(x) \rangle = \int_{\mathbb{R}^n} f(x) \mu(x) dx~,
\label{eqNotation2}
\end{equation}
whenever the previous integral is absolutely convergent.

Thus, for instance
\begin{equation}
\langle x_j \rangle = \int_{\mathbb{R}^d} x_j \mu(x) dx~,
\label{eqNotation3}
\end{equation}
$j=1,\cdots,n$. We write these components collectively as a vector:
\begin{equation}
\langle x \rangle= \left(\langle x_1 \rangle, \cdots,\langle x_n \rangle\right)~.
\label{eqNotation4}
\end{equation}
Likewise,
\begin{equation}
\left| x- \langle x \rangle \right|^2 = \sum_{j=1}^n \left(x_j-\langle x_j \rangle \right)^2 ~ . 
\label{eqNotation5}
\end{equation}
The \textit{spread or dispersion} of $x$ is:
\begin{equation}
\Delta x^2 = \int_{\mathbb{R}^n} \left| x- \langle x \rangle \right|^2 \mu(x) d x ~.
\label{eqNotation6}
\end{equation}
 
The "hat" on the upper case letter $\widehat{A}$ means that $\widehat{A}$ is an operator, whereas $\widehat{f}$ is the Fourier transform of $f \in L^1 (\mathbb{R})$,
\begin{equation}
\widehat{f} (\xi)=\left(\mathcal{F} f \right) (\xi)= \int_{\mathbb{R}^n} f(x)e^{-2 i \pi \xi\cdot x} dx ~,
\label{eqFourier1}
\end{equation} 
which extends to $L^2 (\mathbb{R}^n)$ by usual density arguments.
 
The inner product in $L^2 (\mathbb{R}^n)$ is 
\begin{equation}
\langle f, g\rangle = \int_{\mathbb{R}^n} f(x) \overline{g(x)} dx~,
\label{eqNotation7}
\end{equation}
and $\|f\|_2 = \sqrt{\langle f, f\rangle }$ is the norm.

The Schwartz space of test functions is denoted by $\mathcal{S}(\mathbb{R}^n)$ and its dual are the tempered distributions $\mathcal{S}^{\prime} (\mathbb{R}^n)$. The duality bracket is $<D,f>$, for $D \in \mathcal{S}^{\prime} (\mathbb{R}^n)$ and $f \in \mathcal{S}(\mathbb{R}^n)$. If $D,f \in L^2 (\mathbb{R}^n)$, then:
\begin{equation}
<D,f> = \int_{\mathbb{R}^n} D(x) f(x) dx = \langle D, \overline{f} \rangle ~.
\label{eqNotation8}
\end{equation}

If there is a constant $C>0$ such that $f(z) \leq C g(z)$ for all $z$, then we write simply $f(z) \lesssim g(z)$.

\section{Preliminaries}

\subsection{Symplectic geometry}\label{SectionSymGeom}

The symplectic form on $\mathbb{R}^{2n} \equiv \mathbb{R}^n \times \mathbb{R}^n$ is the skew-symmetric non-degenerate form defined by:
\begin{equation}
\sigma (z,z^{\prime})= z \cdot J^{-1}  z^{\prime}= \xi \cdot x^{\prime} -x \cdot \xi^{\prime} ~,
\label{eqSymp0}
\end{equation}
where 
\begin{equation}
J= \left(
\begin{array}{c c}
0 & I_n\\
-I_n & 0
\end{array}
\right)
\label{eqSymp0A}
\end{equation}
is the standard symplectic matrix, and $z=(x, \xi),~z^{\prime}=(x^{\prime}, \xi^{\prime}) \in \mathbb{R}^{2n}$. Notice that $J^{-1}=J^T=-J$.

The space $\mathbb{R}^{2n}$ endowed with the symplectic form $\sigma$ is called the \textit{standard symplectic vector space} and is denoted by $(\mathbb{R}^{2n}; \sigma)$.

The symplectic group $\operatorname*{Sp}(2n,\mathbb{R})$ consists of all linear automorphisms $s$ of $\mathbb{R}^{2n}$, such that
\begin{equation}
\sigma \left(s(z),s(z^{\prime}) \right)= \sigma(z,z^{\prime})~,
\label{eqSymp1}
\end{equation}
for all $z,z^{\prime} \in \mathbb{R}^{2n}$.

By abuse of notation, we shall frequently refer to the matrix $S$ representing $s$ in the canonical basis as the symplectic transformation:
\begin{equation}
s(z)=Sz~.
\label{eqSymp2}
\end{equation}
From (\ref{eqSymp1}), the matrix $S$ satisfies:
\begin{equation}
S^T J S =J~.
\label{eqSymp3}
\end{equation}

If we write a matrix $S \in  \operatorname*{Sp}(2n,\mathbb{R})$ in block form
\begin{equation}
S= \left(
\begin{array}{c c}
A & B\\
C & D
\end{array}
\right)~,
\label{eqSymp3.1}
\end{equation}
with $A,B,C,D$ some real $n \times n$ matrices, then we conclude from (\ref{eqSymp3}) that:
\begin{equation}
\begin{array}{l}
A^TC,~B^TD \text{ are symmetric, and } A^TD-C^TB=I_n\\
\\
AB^T,~CD^T \text{ are symmetric, and } AD^T-BC^T=I_n
\end{array}
\label{eqSymp3.2}
\end{equation}
If the matrix $B$ is invertible, then the matrix $S$ is said to be a \textit{free symplectic matrix}. To a free symplectic matrix $S_W$ is associated a generating function which is a quadratic form:
\begin{equation}
W(x,x^{\prime})=\frac{1}{2}x \cdot DB^{-1} x- x \cdot B^{-1} x^{\prime} + \frac{1}{2} x^{\prime} \cdot B^{-1} A x^{\prime}~.
\label{eqSymp3.3}
\end{equation}
This terminology comes from the fact that the knowledge of $W(x,x^{\prime})$ uniquely determines the free symplectic matrix $S$. We have:
\begin{equation}
\left(
\begin{array}{c}
x\\
\xi
\end{array}
\right)=\left(
\begin{array}{c c}
A & B\\
C & D
\end{array}
\right) \left(
\begin{array}{c}
x^{\prime}\\
\xi^{\prime}
\end{array}
\right)\Leftrightarrow \left\{
\begin{array}{l}
\xi= \nabla_x W(x,x^{\prime})\\
\xi^{\prime}=- \nabla_{x^{\prime}} W(x,x^{\prime})
\end{array}
\right.
\label{eqSymp3.4}
\end{equation}
The interest of free symplectic matrices resides in the fact that they generate the symplectic group $\operatorname*{Sp}(2n,\mathbb{R})$. More precisely, every $S \in \operatorname*{Sp}(2n,\mathbb{R})$ can be written as the product $S=S_W S_{W^{\prime}}$ of two free symplectic matrices $S_W$, $S_{W^{\prime}}$.

The standard symplectic form is of prime importance in symplectic geometry. Nevertheless it is not the unique symplectic form. Given a skew-symmetric,  $2n \times 2n$ matrix $\Omega \in \operatorname*{Gl}(2n,\mathbb{R})$, one defines the associated symplectic form on $\mathbb{R}^{2n}$:
\begin{equation}
\vartheta \left(z, z^{\prime} \right)= z \cdot \Omega^{-1} z^{\prime}~, ~ \forall z, z^{\prime} \in \mathbb{R}^{2n}.
\label{eqSymp4}
\end{equation}
A famous theorem in symplectic geometry \cite{Cannas} states that any such symplectic form can be brought to the standard form by a linear transformation\footnote{This is a particular case of Darboux' theorem.}. In other words, there exists $D \in \operatorname*{Gl}(2n,\mathbb{R})$, such that:
\begin{equation}
\vartheta \left(Dz , D z^{\prime} \right)= \sigma (z, z^{\prime})~, ~ \forall z, z^{\prime} \in \mathbb{R}^{2n} \Leftrightarrow \Omega = D J D^T~.
\label{eqSymp5}
\end{equation}
Notice that the matrix $D$ is not unique. In fact, it can be easily shown that, if $D$ and $D^{\prime}$ satisfy (\ref{eqSymp5}), then:
\begin{equation}
D^{-1}D^{\prime}  \in \operatorname*{Sp}(2n,\mathbb{R})~.
\label{eqSymp6}
\end{equation}
We shall call the set of all matrices $D$ which satisfy (\ref{eqSymp5}) the set of \textit{Darboux matrices} and denote it by $\mathcal{D}(2n; \vartheta)$. The \textit{non-standard symplectic vector space} $\mathbb{R}^{2n}$ with symplectic form $\vartheta$ is written $(\mathbb{R}^{2n}; \vartheta)$.

The set of automorphisms of the symplectic vector space $\left(\mathbb{R}^{2n}, \vartheta \right)$ with symplectic form $\vartheta$ is denoted by $\operatorname*{Sp}_{\vartheta}(2n,\mathbb{R})$. Strictly speaking we should write $\operatorname*{Sp}_{\sigma}(2n,\mathbb{R})$ instead of $\operatorname*{Sp}(2n,\mathbb{R})$, but we shall keep the simpler notation, as most part of the present work is concerned with the standard symplectic form $\sigma$. 

Thus, $P \in \operatorname*{Sp}_{\vartheta}(2n,\mathbb{R})$ if
\begin{equation}
\vartheta \left(Pz,Pz^{\prime} \right)= \vartheta \left(z, z^{\prime} \right) ~, ~ \forall z,z^{\prime} \in \mathbb{R}^{2n} \Leftrightarrow \Omega =P \Omega P^T~.
\label{eqSymp7}
\end{equation}
In view of (\ref{eqSymp5}), we conclude that $P \in \operatorname*{Sp}_{\vartheta}(2n,\mathbb{R})$, if and only if there exists a Darboux matrix $D \in \mathcal{D}(2n; )$, such that 
\begin{equation}
D^{-1} PD \in \operatorname*{Sp}(2n,\mathbb{R})~.
\label{eqSymp8}
\end{equation}

As an example of a non-standard symplectic form, we consider the magnetic symplectic form. For details, see \cite{GS}, \S 20. Let $B$ be a $2$-differential form on
$\mathbb{R}^{n}$ (hereafter called magnetic field); one associates to $B$ a
$1$-differential form $A$ (the magnetic potential) defined by $B=dA$. The
forms $A$ and $B$ appear naturally when considering a charged particle moving
in a magnetic field, the Hamiltonian function is then of the type
\[
H(x,p)=\sum_{j=1}^{n}\frac{1}{2m_{j}}(p_{j}-A_{j})^{2}%
\]
where the vector field $(A_{1},...,A_{n})$ is  identified with the
differential form $A=\sum_{j=1}^{n}A_{j}dx_{j}$. We have
\begin{align*}
B  & =dA=\sum_{j=1}^{n}\sum_{k=1}^{n}\frac{\partial A_{k}}{\partial x_{j}%
}dx_{j}\wedge dx_{k}\\
& =\sum_{j<k}\left(  \frac{\partial A_{k}}{\partial x_{j}}-\frac{\partial
A_{j}}{\partial x_{k}}\right)  dx_{j}\wedge dx_{k}.
\end{align*}
Identifying $B$ with the antisymmetric matrix $(B_{jk})_{1\leq j,k\leq n}$,
where
\[
B_{jk}=\frac{\partial A_{k}}{\partial x_{j}}-\frac{\partial A_{j}}{\partial
x_{k}}~,%
\]
we denote by $\sigma_{B}$ the antisymmetric bilinear form defined by
$\sigma_{B}(z,z^{\prime})=z^{T}J_{B}^{-1}(z^{\prime})$
where
\[
J_{B}^{-1}=%
\begin{pmatrix}
B& -I\\
I & 0
\end{pmatrix}
=J^{-1}+%
\begin{pmatrix}
B & 0\\
0 & 0
\end{pmatrix} \Leftrightarrow J_{B}=%
\begin{pmatrix}
0 & I\\
-I & B
\end{pmatrix}
~.
\]
One verifies (\cite{GS}, p.134) that $\sigma_{B}$ is non-degenerate, and hence
a symplectic form. Note that in differential form $\sigma_{B}=\sigma+\pi
^{\ast}B$ where $\pi$ is the projection $\mathbb{R}^{2n}\longrightarrow
\mathbb{R}^{n}$, one verifies that $d\sigma_{B}=d\sigma+\pi^{\ast}dB=0$. 

\subsection{The metaplectic group}

 The group
$\operatorname*{Sp}(2n,\mathbb{R})$ is a connected classical Lie
group; it has covering groups of all orders. Its double cover
admits a faithful (but not irreducible) representation by a group
of unitary operators
on $L^{2}(\mathbb{R}^{n})$, the \textit{metaplectic group} $\operatorname*{Mp}%
(2n,\mathbb{R})$. Thus, to every
$S\in\operatorname*{Sp}(2n,\mathbb{R})$ one
can associate two unitary operators $\widehat{S}$ and $-\widehat{S} \in\operatorname*{Mp}%
(2n,\mathbb{R})$. 

In particular, to every free symplectic matrix one can associate two operators $\widehat{S}_{W,m}$ by:

\begin{equation}
\widehat{S}_{W,m}f(x)=\tfrac{i^{m-n/2}}{\sqrt{|\det B|}}\int%
_{\mathbb{R}^{n}}e^{2 \pi iW(x,x^{\prime})}f(x^{\prime})dx^{\prime} \label{swm}%
\end{equation}
for $f\in\mathcal{S}(\mathbb{R}^{n})$ where $m=0$ mod $2$ if $\det B>0$, and $m=1$ mod $2$ if $\det B<0$. 

It can be shown (Leray \cite{Leray}, de Gosson
\cite{Gosson,wiley}) that these operators extend to unitary operators on $L^2 (\mathbb{R}^n)$ and that every element $\widehat{S}$ of $\operatorname*{Mp} (2n,\mathbb{R})$ can be written (non uniquely) as a product $\widehat{S}_{W,m}\widehat{S}_{W^{\prime},m^{\prime}}$.

It easily follows from the form of the generators (\ref{swm}) of
$\operatorname*{Mp}(2n,\mathbb{R})$ that metaplectic operators are continuous
mappings $\mathcal{S}(\mathbb{R}^{n})\longrightarrow\mathcal{S}(\mathbb{R}%
^{n})$ which extend by duality to continuous mappings
$\mathcal{S}^{\prime
}(\mathbb{R}^{n})\longrightarrow\mathcal{S}^{\prime}(\mathbb{R}^{n})$.
The inverse of the operator $S_{W,m}$ is given by
$S_{W,m}^{-1}=S_{W,m}^{\ast }=S_{W^{\ast},m^{\ast}}$ where
$W^{\ast}(x,x^{\prime})=-W(x^{\prime},x)$ and $m^{\ast}=n-m$.

\subsection{Weyl quantization on $(\mathbb{R}^{2n}; \sigma)$}

The \textit{symplectic Fourier transform} of a function $F \in L^1 (\mathbb{R}^{2n}) \cap L^2 (\mathbb{R}^{2n})$ is given by:
\begin{equation}
(\mathcal{F}_{\sigma} F) (\zeta) =  \int_{\mathbb{R}^{2n}} F(z) e^{- 2i \pi  \sigma (\zeta,z)} dz.
\label{eqReview9}
\end{equation}
It is related to the Fourier transform by:
\begin{equation}
(\mathcal{F}_{\sigma} F)(\zeta) = \widehat{F} (J \zeta)~.
\label{eqReview10}
\end{equation}
The symplectic Fourier transform is an involution which extends by duality to an involutive automorphism $\mathcal{S}^{\prime} (\mathbb{R}^{2n}) \to \mathcal{S}^{\prime} (\mathbb{R}^{2n})$.

Given a symbol $a^{\sigma} \in \mathcal{S}^{\prime} (\mathbb{R}^{2n})$, the associated \textit{Weyl operator} is given by the Bochner integral \cite{Birk,Gosson}:
\begin{equation}
\widehat{A}:=  \int_{\mathbb{R}^{2n}} (\mathcal{F}_{\sigma} a^{\sigma}) (z_0) \widehat{T}^{\sigma} (z_0 ) dz_0,
\label{eqReview11}
\end{equation}
\begin{equation}
(\widehat{T}^{\sigma} (z_0) f) (x)= e ^{ 2 i \pi \xi_0 \cdot\left(x- \frac{x_0}{2} \right)} f(x-x_0),
\label{eqReview12}
\end{equation}
for $z_0 = (x_0, \xi_0) \in \mathbb{R}^{2n}$ and $f \in \mathcal{S} (\mathbb{R}^n)$. We remark that the operator $\widehat{A}$ is formally self-adjoint if and only its symbol $a^{\sigma}$ is real.

The fundamental operators in Weyl quantization are the operators:
\begin{equation}
\left\{
\begin{array}{l l}
\left(\widehat{X}_l f\right)(x)=  x_l f(x),  & l=1, \cdots,n\\
& \\
\left(\widehat{P}_l f \right)(x)= \frac{1}{2 \pi i} \partial_l f(x),  & l=1, \cdots,n
\end{array}
\right.
\label{eq21}
\end{equation}
In quantum mechanics $\widehat{X}_l$ is interpreted as the $l$-th component of the position of a particle and $\widehat{P}_l$ as the $l$-th component of its momentum. We can write them collectively as $\widehat{Z}= \left(\widehat{X}, \widehat{P}\right)$, with $\widehat{Z}_l=\widehat{X}_l$, $\widehat{Z}_{n+l}=\widehat{P}_l$, $l=1, \cdots, n$.

They satisfy the following commutation relations, called the \textit{Heisenberg algebra}:
\begin{equation}
\left[\widehat{Z}_{\alpha},\widehat{Z}_{\beta} \right]= \frac{i}{2 \pi} J_{\alpha, \beta} \widehat{I}~, \hspace{0.5 cm} \alpha, \beta= 1, \cdots, 2n~,
\label{eq21A}
\end{equation}
where $\left(J_{\alpha, \beta} \right)_{\alpha, \beta}$ are the entries of the standard symplectic matrix $J$ and  $\widehat{I}$ is the identity operator.

If $S \in \operatorname*{Sp}(2n,\mathbb{R})$ and $\widehat{S}$ is any of the two metaplectic operators that project onto $S$, then we have:
\begin{equation}
\widehat{S}^{\ast} \widehat{Z}_{\alpha} \widehat{S}= \sum_{\beta=1}^{2n} S_{\alpha, \beta} 
\widehat{Z}_{\beta}~, 
\label{eqCovariance}
\end{equation}
for all $\alpha=1, \cdots, 2n$.

In terms of these operators, one can write:
\begin{equation}
\widehat{T}^{\sigma} (z_0)= e^{2i \pi \sigma (z_0, \widehat{Z})}~.
\label{eq21B}
\end{equation}
The exponentiation of (\ref{eq21A}) leads to the following commutation relations for the Heisenberg-Weyl displacement operators:
\begin{equation}
\widehat{T}^{\sigma} (z_1) \widehat{T}^{\sigma} (z_2)=e^{i \pi \sigma (z_1,z_2)} \widehat{T}^{\sigma} (z_1+z_2) = e^{2 i \pi \sigma (z_1,z_2)}\widehat{T}^{\sigma} (z_2) \widehat{T}^{\sigma} (z_1)~,
\label{eq21C}
\end{equation}
for all $z_1,z_2 \in \mathbb{R}^{2n}$.

The \textit{Weyl correspondence}, written $a^{\sigma} \overset{\mathrm{Weyl}}{\longleftrightarrow} \widehat{A}$ or $\widehat{A} \overset{\mathrm{Weyl}}{\longleftrightarrow} a^{\sigma}$, between an element $a^{\sigma} \in \mathcal{S}^{\prime} (\mathbb{R}^{2n})$ and the Weyl operator it defines is bijective; in fact the Weyl transform is one-to-one from $\mathcal{S}^{\prime} (\mathbb{R}^{2n})$ onto the space $\mathcal{L}\left( \mathcal{S}(\mathbb{R}^{n}),\mathcal{S}^{\prime} (\mathbb{R}^{n})\right)$ of linear continuous maps $\mathcal{S}(\mathbb{R}^{n}) \to \mathcal{S}^{\prime} (\mathbb{R}^{n})$ (see e.g. Maillard \cite{Maillard}, Unterberger \cite{Unterberger} or Wong \cite{Wong}). This can be proven using Schwartz's kernel theorem and the fact that the Weyl symbol $a^{\sigma}$ of the operator $\widehat{A}$ is related to the distributional kernel $K_A$ of that operator by the partial Fourier transform with respect to the y variable
\begin{equation}
a^{\sigma}(x, \xi) = \int_{\mathbb{R}^n} K_A \left( x+ \frac{y}{2},x- \frac{y}{2} \right) e^{- 2 i \pi \xi \cdot y} dy,
\label{eqReview13}
\end{equation}
where $K_A \in  \mathcal{S}^{\prime} (\mathbb{R}^n \times \mathbb{R}^n )$ and the Fourier transform is defined in the usual distributional sense. Conversely, the kernel $K_A$ is expressed in terms of the symbol $a^{\sigma}$ by the inverse Fourier transform
\begin{equation}
K_A(x, y) =  \int_{\mathbb{R}^n} a^{\sigma} \left(\frac{x+y}{2},\xi \right) e^{ 2 i \pi  \xi \cdot (x-y)} d \xi.
\label{eqReview14}
\end{equation}

Weyl operators enjoy the following symplectic covariance property \cite{Folland1,Birk,Gosson,Gro,Wong}. Let $S \in Sp(2n, \mathbb{R})$ and $\widehat{S} \in Mp(2n, \mathbb{R})$ be one of the two metaplectic operators that project onto $S$. If $\widehat{A}: \mathcal{S} (\mathbb{R}^n) \to \mathcal{S}^{\prime} (\mathbb{R}^n)$ is  a Weyl operator with symbol $a^{\sigma} \in  \mathcal{S}^{\prime} (\mathbb{R}^{2n})$, then we have
\begin{equation}
\widehat{S}^{-1} \widehat{A} \widehat{S} \overset{\mathrm{Weyl}}{\longleftrightarrow} a^{\sigma} \circ S .
\label{eqReview14.1}
\end{equation}

An important case consists of rank one operators of the form:
\begin{equation}
\left(\widehat{\rho}_{f,g} h \right) (x) = \langle h , g \rangle f (x),
\label{eqReview17}
\end{equation}
for fixed $f,g \in L^2 (\mathbb{R}^n)$ acting on $h \in L^2 (\mathbb{R}^n)$. They are Hilbert-Schmidt operators with kernel $K_{f,g}(x,y) = (f \otimes \overline{g}) (x,y)=f(x)  \overline{g(y)} $. According to (\ref{eqReview13}), the associated Weyl symbol is:
\begin{equation}
W_{\sigma}\left(f, g \right)(x,\xi) = \int_{\mathbb{R}^n} f \left( x + \frac{y}{2} \right) \overline{g \left( x - \frac{y}{2} \right)} e^{- 2 i \pi \xi \cdot y} dy.
\label{eqReview18}
\end{equation}
This is known as the \textit{cross-Wigner function}.

From (\ref{eqReview14.1}), we conclude that
\begin{equation}
W_{\sigma}(\widehat{S}f,\widehat{S}g)(z)=W_{\sigma}(f,g)(S^{-1} z).
\label{eqReview19.1}
\end{equation}
If $g=f$, we simply write $W_{\sigma}f$ meaning $W_{\sigma}(f,f)$:
\begin{equation}
W_{\sigma}f(x, \xi)=  \int_{\mathbb{R}^n} f \left( x + \frac{y}{2} \right) \overline{f \left( x - \frac{y}{2} \right)} e^{- 2 i \pi \xi \cdot y} dy.
\label{eqReview20}
\end{equation}
One aspect which makes the Wigner formalism very appealing is the fact that expectation values are computed with a formula akin to classical statistical mechanics \cite{Folland,Gosson,Wong}. Indeed, if $\widehat{A}$ is a self-adjoint Weyl operator with symbol $a^{\sigma} \in \mathcal{S} (\mathbb{R}^{2n})$, then it can be shown that
\begin{equation}
\langle \widehat{A} f ,g \rangle= \langle a^{\sigma},  W_{\sigma}(g,f)  \rangle,
\label{eqReview24}
\end{equation}
for $f,g \in  \mathcal{S} (\mathbb{R}^n)$. In particular, we have:
\begin{equation}
<\widehat{A} >_f=\langle f, \widehat{A} f \rangle = \int_{\mathbb{R}^{2n}} a^{\sigma}(x,\xi) W_{\sigma} f(x,\xi) dx d \xi~.
\label{eqReview25}
\end{equation}
Notice that equations (\ref{eqIntro11},\ref{eqIntro12}) are just particular instances of this formula.

We can also extend (\ref{eqReview24}) to the case $a^{\sigma} \in \mathcal{S}^{\prime} (\mathbb{R}^{2n})$ (see eq.(\ref{eqNotation8})):
\begin{equation}
< \widehat{A} f , \overline{g} >= < a^{\sigma},  W_{\sigma}(f,g)  >,
\label{eqReview25A}
\end{equation}
for $f,g \in  \mathcal{S} (\mathbb{R}^n)$, and where we used the fact that $\overline{W_{\sigma}(g,f)}=W_{\sigma}(f,g)$.

Before we complete this section, let us briefly mention the covariance properties of Wigner distributions under phase-space translations. It is a well known fact that \cite{Gosson}:
\begin{equation}
W_{\sigma} \left(\widehat{T}^{\sigma} (z_0) f \right) (z)= W_{\sigma} f (z-z_0)~,
\label{eqReview25A}
\end{equation}
for any $z_0 \in \mathbb{R}^{2n}$.  

\subsection{Weyl quantization on $(\mathbb{R}^{2n}; \vartheta)$}\label{SectionWeylNonStandard}

Alternatively, one may choose to quantize on a non-standard symplectic vector space  $(\mathbb{R}^{2n}; \vartheta)$ \cite{Dias1,Dias2,Dias3,Dias4,Dias5}.

Given a Darboux matrix $D \in \mathcal{D}(2n;\vartheta)$, we define the new operators:
\begin{equation}
\widehat{\Xi}= D \widehat{Z} ~,
\label{eqNonStandQuant1}
\end{equation}
which satisfy the commutation relations:
\begin{equation}
\left[\widehat{\Xi}_{\alpha},\widehat{\Xi}_{\beta} \right]= \frac{i}{2 \pi} \Omega_{\alpha, \beta} \widehat{I}~, \hspace{0.5 cm} \alpha, \beta= 1, \cdots, 2n ~,
\label{eqNonStandQuant2}
\end{equation}
where $\left(\Omega_{\alpha, \beta} \right)_{\alpha, \beta}$ are the entries of the matrix $\Omega$.

Upon exponentiation, we obtain the nonstandard Heisenberg-Weyl displacement operators:
\begin{equation}
\widehat{T}^{\vartheta} (z_0)= e^{2 i \pi \vartheta (z_0, \widehat{\Xi})}~.
\label{eqNonStandQuant3}
\end{equation}
Notice that in view of (\ref{eqNonStandQuant1}), we may write:
\begin{equation}
\widehat{T}^{\vartheta} (z_0)=e^{2 i \pi \vartheta (z_0, \widehat{\Xi})}=e^{2 i \pi \vartheta (z_0, D\widehat{Z})}=e^{2 i \pi \sigma (D^{-1}z_0, \widehat{Z})}=\widehat{T}^{\sigma} (D^{-1} z_0)~,
\label{eqNonStandQuant4}
\end{equation}
and it follows that:
\begin{equation}
\widehat{T}^{\vartheta} (z_1)\widehat{T}^{\vartheta} (z_2)= e^{i \pi \vartheta(z_1,z_2)} \widehat{T}^{\vartheta} (z_1+z_2)= e^{2 i \pi \vartheta(z_1,z_2)} \widehat{T}^{\vartheta} (z_2) \widehat{T}^{\vartheta} (z_1)~,
\label{eqNonStandQuant5}
\end{equation}
for all $z_1,z_2 \in \mathbb{R}^{2n}$.

Also, if we perform the substitution $z_0=D^{-1} z$ in (\ref{eqReview11}), we obtain:
\begin{equation}
\begin{array}{c}
\widehat{A}= \int_{\mathbb{R}^{2n}} \left(\mathcal{F}_{\sigma} a^{\sigma} \right) (z_0)\widehat{T}^{\sigma} (z_0) d z_0= \\
\\
=\frac{1}{\sqrt{\det \Omega}}\int_{\mathbb{R}^{2n}} \left(\mathcal{F}_{\sigma} a^{\sigma} \right) (D^{-1} z)\widehat{T}^{\vartheta} (z) d z~,
\end{array}
\label{eqNonStandQuant6}
\end{equation}
where we used the fact that $\det \Omega = \left(\det D \right)^2$.

Next notice that:
\begin{equation}
\begin{array}{c}
\left(\mathcal{F}_{\sigma} a^{\sigma} \right) (D^{-1} z)=\int_{\mathbb{R}^{2n}} a^{\sigma}(z_0) e^{- 2 i \pi  \sigma (D^{-1} z, z_0)} d z_0=\\
\\
=  \int_{\mathbb{R}^{2n}} a^{\sigma}(z_0) e^{- 2 i \pi  \vartheta (z, D z_0)} d z_0= \\
\\
=\frac{1}{\sqrt{\det \Omega}} \int_{\mathbb{R}^{2n}} a^{\sigma}(D^{-1} z_1) e^{- 2 i \pi  \vartheta (z, z_1)} d z_1~.
\end{array}
\label{eqNonStandQuant7}
\end{equation}
This suggests the definition of the $\vartheta$-symplectic Fourier transform \cite{Dias4,Dias5}:
\begin{equation}
\left(\mathcal{F}_{\vartheta} a \right) (\zeta):= \frac{1}{\sqrt{\det \Omega}} \int_{\mathbb{R}^{2n}} a(z) e^{-2 i \pi \vartheta (\zeta, z)} dz ~, 
\label{eqNonStandQuant8}
\end{equation}
in a distributional sense for $a \in \mathcal{S}^{\prime} (\mathbb{R}^{2n})$, and the $\vartheta$-Weyl symbol:
\begin{equation}
a^{\vartheta} (z):= a^{\sigma} (D^{-1} z)~.
\label{eqNonStandQuant9}
\end{equation}
Altogether from (\ref{eqNonStandQuant6}-\ref{eqNonStandQuant9}), we obtain:
\begin{equation}
\widehat{A}=\frac{1}{\sqrt{\det \Omega}} \int_{\mathbb{R}^{2n}} \left(\mathcal{F}_{\vartheta} a^{\vartheta} \right) (z) \widehat{T}^{\vartheta} (z) d z ~.
\label{eqNonStandQuant10}
\end{equation}
This defines the correspondence principle between an operator $\widehat{A}$ and its $\vartheta$-Weyl symbol $a^{\vartheta}$. 

We thus have for $a^{\sigma} (z) \in \mathcal{S}^{\prime} (\mathbb{R}^{2n})$ and $f, g \in \mathcal{S}(\mathbb{R}^n)$ (see eq.(\ref{eqReview25A})):
\begin{equation}
\begin{array}{c}
< \widehat{A} f,\overline{g}> = <a^{\sigma} ,W_{\sigma} (f,g)> = \\
\\
= \int_{\mathbb{R}^{2n}} a^{\sigma} (z) W_{\sigma}(f,g) (z) dz =\\
\\
= \frac{1}{\sqrt{\det \Omega}} \int_{\mathbb{R}^{2n}}  a^{\sigma} (D^{-1}z_0) W_{\sigma}(f,g) (D^{-1} z_0) dz_0=\\
\\
=\int_{\mathbb{R}^{2n}}  a^{\vartheta} (z) W_{\vartheta}(f,g) (z) dz= <a^{\vartheta} ,W_{\vartheta} (f,g)> ~,
\end{array}
\label{eqNonStandQuant11}
\end{equation}
where
\begin{equation}
W_{\vartheta} (f,g) (z) = \frac{1}{\sqrt{\det \Omega}} W_{\sigma} (f,g) (D^{-1}z) 
\label{eqNonStandQuant12}
\end{equation}
is the $\vartheta$-\textit{cross Wigner distribution} \cite{Dias4,Dias5}.  

In particular the $\vartheta$-Wigner function is:
\begin{equation}
W_{\vartheta} f (z): = W_{\vartheta} (f,f) (z) = \frac{1}{\sqrt{\det \Omega}} W_{\sigma} f (D^{-1}z)~. 
\label{eqNonStandQuant13}
\end{equation}

Regarding the covariance property with respect to phase-space translations, the counterpart of (\ref{eqReview25A}) is:
\begin{equation}
W_{\vartheta} \left(\widehat{T}^{\vartheta} (z_0) f \right) (z)= W_{\vartheta} f (z-z_0)~,
\label{eqReview25B}
\end{equation}
for any $z_0 \in \mathbb{R}^{2n}$.  

Indeed, from (\ref{eqReview25A},\ref{eqNonStandQuant4},\ref{eqNonStandQuant13}):
\begin{equation}
\begin{array}{c}
W_{\vartheta} \left(\widehat{T}^{\vartheta} (z_0) f \right) (z)=\frac{1}{\sqrt{\det \Omega}} W_{\sigma} \left(\widehat{T}^{\sigma} (D^{-1}z_0) f \right) (D^{-1}z)=\\
\\
=\frac{1}{\sqrt{\det \Omega}} W_{\sigma}f \left(D^{-1}z-D^{-1}z_0 \right)=\frac{1}{\sqrt{\det \Omega}} W_{\sigma}f \left(D^{-1}(z-z_0) \right)=\\
\\
=W_{\vartheta} f (z-z_0)~.
\end{array}
\label{eqReview25C}
\end{equation}

\subsection{Linear canonical transform}

Given a free symplectic matrix $S \in \operatorname*{Sp}(2n,\mathbb{R})$, the associated linear canonical transform (LCT) is defined by:
\begin{equation}
\begin{array}{c}
f \in  \mathcal{S}(\mathbb{R}^{n})
\mapsto \mathcal{L}_{S} \left[f \right] (\xi) =\widehat{f}_{S} (\xi) =\\
\\
= \frac{1}{i^{n/2}\sqrt{ \det (B)}} \int_{\mathbb{R}^n} f(x) e^{i \pi \left(\xi \cdot D  B^{-1} \xi +  x \cdot B^{-1} Ax \right) -2 \pi i x \cdot  B^{-1} \xi} d x,
\end{array}
\label{eq1}
\end{equation}
where the symplectic matrix is written in block form (\ref{eqSymp3.1}) satisfying (\ref{eqSymp3.2}).
In view of (\ref{eq1}) we must also require that $\det (B) \neq 0$. In other words, $S$ is a free symplectic matrix. Therefore, we should write it as $S_W$, where $W$ is the corresponding generating function (\ref{eqSymp3.3}).

By comparison with the metaplectic representation (cf.(\ref{swm})), we conclude that:
\begin{equation}
\left( \widehat{S}_{W,m} f\right) (\xi) = \mathcal{L}_{S_W} \left[f \right] (\xi)~.
\label{eqMetaplectic2}
\end{equation}
There is admittedly an ambiguity in the choice of the value of $m$ in (\ref{eqMetaplectic2}). But as this amounts only to a sign, all the relevant results of the present work are independent of it, and we will therefore ignore it.

If $S^{(1)}$, $S^{(2)}$ are two symplectic matrices, such that $\det (B_2 A_1^T-A_2 B_1^T) \neq 0$  (see the block form (\ref{eqSymp3.1})), then:
\begin{equation}
\mathcal{L}_{S^{(1)}S^{(2)}}=\mathcal{L}_{S^{(1)}}\mathcal{L}_{S^{(2)}}~. 
\label{swmextra}
\end{equation}

Notice also that we can writte the LCT transform (\ref{eq1}) as:
\begin{equation}
\widehat{f}_S(\xi)=\mathcal{L}_Sf (\xi)= \frac{1}{\sqrt{| \det B|}} \gamma_1 (\xi) \mathcal{F}\left[\gamma_2 f \right] (B^{-1} \xi )~,
\label{eqExtraLCT1}
\end{equation}
where, as before, $\mathcal{F}$ is the Fourier transform, and $\gamma_1,\gamma_2$ are the two chirp functions: 
\begin{equation}
\gamma_1(\xi)= e^{i \pi \xi \cdot DB^{-1} \xi}~,  
\hspace{0.5 cm} \gamma_2(x)=e^{i \pi x \cdot B^{-1}A x }~.
\label{eqExtraLCT2}
\end{equation}
In view of (\ref{eqExtraLCT1},\ref{eqExtraLCT2}), the LCT  satisfies:
\begin{equation}
\|\widehat{f}_{S} \|_2 = \|f\|_2,
\label{eqMetaplectic4}
\end{equation}
and thus extends to a unitary transformation on $L^2 (\mathbb{R}^n)$.

Particular cases include:

\begin{enumerate}
\item {\bf The Fourier transform:} If we choose $A=D=0$, $B=-C=I_n$, or equivalently
\begin{equation}
S=J= \left(
\begin{array}{c c}
0 & I_n\\
-I_n & 0
\end{array}
\right),
\label{eq4}
\end{equation}
where $J$ is the standard symplectic matrix, then we obtain up to a multiplicative constant the Fourier transform:
\begin{equation}
\mathcal{L}_{J} \left[f \right]  (\xi) = i^{-n/2}\widehat{f} (\xi)= i^{-n/2} \int_{\mathbb{R}^n} f(x) e^{-2 i \pi x \cdot \xi} dx~.
\label{eq5}
\end{equation}

\vspace{0.3 cm}
\item {\bf The fractional Fourier transform:} If we choose $A=D= \text{diag} \left(\cos \theta_1, \cdots ,\cos \theta_n  \right)$ and $B=-C= \text{diag} \left(\sin \theta_1, \cdots ,\sin \theta_n  \right)$, where $\theta_j \in \left(0, 2 \pi \right) \backslash \left\{\pi \right\}$, then we obtain the fractional Fourier transform:
\begin{equation}
\mathcal{L}_{S_{{\bf \theta}}} \left[f \right]  (\xi) = \frac{1}{i^{n/2}\sqrt{\prod_{j=1}^n \sin \theta_j}} \int_{\mathbb{R}^n} f(x) e^{i \pi \sum_{k=1}^n \sin^{-1} \theta_k \left[\left(x_k^2 + \xi_k^2 \right) \cos \theta_k -2 \theta_k x_k \xi_k\right]} dx~.
\label{eq6}
\end{equation}

\vspace{0.3 cm}
\item {\bf The Fresnel transform:} If we choose $A=D= I_n$, $B= \text{diag} (b_1, \cdots,b_n)$ where $b_j \neq 0$,  and $C=0$, then we obtain the Fresnel transform:
\begin{equation}
\mathcal{L}_{S_{{\bf b}}} \left[f \right]  (\xi) = \frac{1}{i^{n/2}\sqrt{\prod_{j=1}^n b_j}} \int_{\mathbb{R}^n} f(x) e^{i \pi \sum_{k=1}^n b_k (x_k - \xi_k)^2} dx~.
\label{eq7}
\end{equation}

\vspace{0.3 cm}
\item {\bf The Lorentz transform:} If we choose $A=D= \text{diag} \left(\cosh \phi_1, \cdots ,\cosh \phi_n  \right)$ and $B=C= \text{diag} \left(\sinh \phi_1, \cdots ,\sinh \phi_n  \right)$, where $\phi_j \neq 0$, then we obtain the Lorentz transform:
\begin{equation}
\mathcal{L}_{S_{{\bf \phi}}} \left[f \right]  (\xi) = \frac{1}{i^{n/2}\sqrt{\prod_{j=1}^n \sinh \phi_j}} \int_{\mathbb{R}^n} f(x) e^{i \pi \sum_{k=1}^n \sinh^{-1} \phi_k \left[\left(x_k^2 + \xi_k^2 \right) \cosh \phi_k -2  \phi_k x_k \xi_k\right]} dx~.
\label{eq8}
\end{equation}

\end{enumerate}

\section{Uncertainty principles}

If $f, |x|f \in L^2 (\mathbb{R}^n)$, then we define the spread in the time domain:
\begin{equation}
\Delta x^2 = \int_{\mathbb{R}^n} |x- <x> |^2 |f(x)|^2dx,
\label{eq9}
\end{equation}
where $<x>$ is as in (\ref{eqNotation4}) and $<x_j>$  is the expectation value:
\begin{equation}
<x_j> = \int_{\mathbb{R}^n} x_j  \frac{|f(x)|^2}{\|f\|_2^2} dx, \hspace{0.5 cm} j=1, \cdots,n~.
\label{eq10}
\end{equation}
Likewise, if $\widehat{f}, |\xi|\widehat{f} \in L^2 (\mathbb{R}^n)$, then we define the spread in the frequency domain:
\begin{equation}
\Delta \xi^2 = \int_{\mathbb{R}^n} |\xi- < \xi> |^2 |\widehat{f}(\xi)|^2 d \xi,
\label{eq9}
\end{equation}
where $<\xi>=(<\xi_1>, \cdots, <\xi_n>)$ is the expectation value:
\begin{equation}
<\xi_j> = \int_{\mathbb{R}^n} \xi_j  \frac{|\widehat{f}(\xi)|^2}{\|f\|_2^2} d \xi, \hspace{0.5 cm} j=1, \cdots,n.
\label{eq10}
\end{equation}
By the same token, given a symplectic matrix $S$ and the associated metaplectic operators $\pm \widehat{S}$, suppose that if $(\widehat{S}f)(\xi) , |\xi|(\widehat{S}f) (\xi) \in L^2 (\mathbb{R}^n)$, then we define the spread in the LCT domain:
\begin{equation}
\Delta_S \xi^2 = \int_{\mathbb{R}^n} |\xi- < \xi> |^2 |(\widehat{S} f)(\xi)|^2 d \xi,
\label{eq9}
\end{equation}
where $<\xi>=(<\xi_1>, \cdots, <\xi_n>)$ is the expectation value:
\begin{equation}
<\xi_j> = \int_{\mathbb{R}^n} \xi_j  \frac{|(\widehat{S}f)(\xi)|^2}{\|f\|_2^2} d \xi, \hspace{0.5 cm} j=1, \cdots,n.
\label{eq10}
\end{equation}
The Heisenberg uncertainty principle in its classic form can be easily derived from the following proposition (for a proof see e.g. Proposition 2.1 \cite{Folland}).
\begin{proposition}\label{PropositionHeisenberg}
Let $\widehat{A}$ and $\widehat{B}$ be essentially self-adjoint operators on a Hilbert space $\mathcal{H}$ and $a,b \in \mathbb{C}$. Then:
\begin{equation}
\|(\widehat{A}-a)f \|_{\mathcal{H}} ~\|(\widehat{B}-b)f \|_{\mathcal{H}} \geq \frac{1}{2} \left| \langle \left[\widehat{A},\widehat{B} \right]f,f \rangle \right|,
\label{eq11}
\end{equation}
for all $f \in \text{Dom} \left(\left[ \widehat{A}, \widehat{B} \right]\right)$. Moreover, we have an equality if and only if
\begin{equation}
\left(\widehat{A} -a \right) f= i c \left(\widehat{B}-b \right)f,
\label{eq12}
\end{equation}
for some $c \in \mathbb{R}$.
\end{proposition}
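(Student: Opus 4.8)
The plan is to derive (\ref{eq11}) by the classical Cauchy--Schwarz argument (this is Robertson's uncertainty inequality) and then to read off the equality case by tracking when each step is saturated. Two preparatory remarks set the stage. First, scalars are central, so $[\widehat{A}-a,\widehat{B}-b]=[\widehat{A},\widehat{B}]$ and the right-hand side of (\ref{eq11}) is unaffected by the shifts $a,b$. Second, one may assume $a,b\in\mathbb{R}$: since $\widehat{A}-\operatorname{Re}(a)$ is essentially self-adjoint, $\langle(\widehat{A}-\operatorname{Re}(a))f,f\rangle$ is real, and expanding the norm yields the Pythagoras-type identity
\[
\|(\widehat{A}-a)f\|_{\mathcal{H}}^{2}=\|(\widehat{A}-\operatorname{Re}(a))f\|_{\mathcal{H}}^{2}+(\operatorname{Im}(a))^{2}\|f\|_{\mathcal{H}}^{2}\ \geq\ \|(\widehat{A}-\operatorname{Re}(a))f\|_{\mathcal{H}}^{2},
\]
and likewise for $\widehat{B}$; hence it suffices to prove the inequality for real $a,b$, in which case $P:=\widehat{A}-a$ and $Q:=\widehat{B}-b$ are essentially self-adjoint with $[P,Q]=[\widehat{A},\widehat{B}]$.

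For $f\in\operatorname{Dom}([\widehat{A},\widehat{B}])$ I would transfer $P$ and $Q$ across the inner product using their self-adjointness, obtaining the key identity
\[
\langle[\widehat{A},\widehat{B}]f,f\rangle=\langle Qf,Pf\rangle-\langle Pf,Qf\rangle=2i\operatorname{Im}\langle Qf,Pf\rangle .
\]
Passing to absolute values, using $|\operatorname{Im}\langle Qf,Pf\rangle|\leq|\langle Qf,Pf\rangle|$ and then Cauchy--Schwarz, one obtains $|\langle[\widehat{A},\widehat{B}]f,f\rangle|\leq 2\|Pf\|_{\mathcal{H}}\|Qf\|_{\mathcal{H}}\leq 2\|(\widehat{A}-a)f\|_{\mathcal{H}}\|(\widehat{B}-b)f\|_{\mathcal{H}}$, which is (\ref{eq11}) after dividing by $2$.

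The equality statement follows by demanding that all of these inequalities be equalities; assume $f\neq0$, the case $f=0$ being trivial. Saturation of the Pythagoras step (and its $\widehat{B}$-analogue) forces $\operatorname{Im}(a)=\operatorname{Im}(b)=0$; saturation of Cauchy--Schwarz forces $Pf$ and $Qf$ to be linearly dependent; and saturation of $|\operatorname{Im}\langle Qf,Pf\rangle|\leq|\langle Qf,Pf\rangle|$ forces $\langle Qf,Pf\rangle$ to be purely imaginary. Writing $Pf=\lambda Qf$ (the degenerate possibility $Qf=0$, in which both sides of (\ref{eq11}) vanish, being dealt with separately) then gives $\langle Qf,Pf\rangle=\overline{\lambda}\,\|Qf\|_{\mathcal{H}}^{2}$, so $\lambda=ic$ with $c\in\mathbb{R}$, that is $(\widehat{A}-a)f=ic(\widehat{B}-b)f$; conversely, inserting this relation (with the now-real $a,b$) makes both sides of (\ref{eq11}) equal to $|c|\,\|(\widehat{B}-b)f\|_{\mathcal{H}}^{2}$. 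I expect the only real subtlety to be the domain bookkeeping required to justify the manipulations in the key identity on $\operatorname{Dom}([\widehat{A},\widehat{B}])$ --- which is exactly what essential self-adjointness, rather than mere symmetry, delivers --- along with the separate treatment of the degenerate case $Qf=0$; otherwise the content is just the two-line Cauchy--Schwarz computation.
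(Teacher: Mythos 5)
Your proof is correct and is exactly the standard Cauchy--Schwarz/Robertson argument that the paper itself does not reproduce but delegates to Proposition 2.1 of Folland--Sitaram \cite{Folland}. The only loose end is the degenerate case $(\widehat{B}-b)f=0$ with $(\widehat{A}-a)f\neq 0$, where both sides of (\ref{eq11}) vanish yet (\ref{eq12}) fails for every finite $c$; this is an imprecision already present in the statement (and in the cited source), not a defect of your argument, though your ``dealt with separately'' should acknowledge it explicitly rather than defer it.
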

If we set $\left(\widehat{A}f \right)(x)=\left(\widehat{X}_jf \right)(x)=x_j f(x)$ and $\left(\widehat{B}f \right)(x)=\left(\widehat{P}_j f \right)(x)= \frac{1}{2 \pi i} \partial_j f(x)$ on the intersection of their maximal domains for fixed $j=1, \cdots,n$, then we recover the Heisenberg uncertainty principle:
\begin{equation}
\int_{\mathbb{R}^n} (x_j - a_j)^2 |f(x)|^2 dx ~ \int_{\mathbb{R}^n} (\xi_j - b_j)^2 |\widehat{f}( \xi)|^2 d \xi \geq \frac{\|f\|_2^4}{16 \pi^2},
\label{eq13}
\end{equation}
for $ j=1, \cdots, n$, and $a=(a_1, \cdots,a_n), ~b = (b_1, \cdots, b_n) \in \mathbb{R}^n$. We have an equality in (\ref{eq13}) for all $j=1, \cdots, n$, if and only if $f$ is a generalized Gaussian of the form:
\begin{equation}
f(x)=Ce^{ix \cdot b} e^{-(x-a) \cdot K (x-a)},
\label{eq14}
\end{equation}
where $C \in \mathbb{C}$ and $K= \text{diag} (k_1, \cdots, k_n)$, with $k_j >0$, $j=1, \cdots, n$.

If we set $a=<x>$, $b=<\xi>$, take the square root of both sides of (\ref{eq13}), sum over $j$ and use the Cauchy-Schwarz inequality for vectors in $\mathbb{C}^n$, we arrive at the following $n$-dimensional version of the Heisenberg uncertainty principle:
\begin{corollary}\label{CorollaryHeisenberg}
Let $f \in L^2 (\mathbb{R}^n)$. Then we have:
\begin{equation}
\Delta x^2 ~\Delta \xi^2 \geq  \frac{n^2 }{16 \pi^2} \|f\|_2^4.
\label{eq15}
\end{equation}
Moreover, we have an equality in (\ref{eq15}), if and only if,
\begin{equation}
f(x)=Ce^{i x \cdot < \xi>} e^{-K |x-<x>|^2},
\label{eq16}
\end{equation}
where $C \in \mathbb{C}$ and $K>0$.
\end{corollary}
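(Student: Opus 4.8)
The plan is to bootstrap the coordinatewise estimate (\ref{eq13}) — already derived from Proposition \ref{PropositionHeisenberg} by taking $\widehat A=\widehat X_j$ and $\widehat B=\widehat P_j$ — into the isotropic $n$-dimensional bound by summing over the coordinate index $j$ and then collapsing that sum with the Cauchy–Schwarz inequality in $\mathbb R^n$. We may assume $\Delta x^2$ and $\Delta\xi^2$ are both finite (otherwise (\ref{eq15}) is trivial), so that each coordinate spread $\Delta x_j^2:=\int_{\mathbb R^n}(x_j-\langle x_j\rangle)^2|f(x)|^2\,dx$ and $\Delta\xi_j^2:=\int_{\mathbb R^n}(\xi_j-\langle\xi_j\rangle)^2|\widehat f(\xi)|^2\,d\xi$ is finite; moreover, by the definition (\ref{eqNotation5}) of the Euclidean norm one has the additivity relations $\Delta x^2=\sum_{j=1}^n\Delta x_j^2$ and $\Delta\xi^2=\sum_{j=1}^n\Delta\xi_j^2$.

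First I would set $a_j=\langle x_j\rangle$ and $b_j=\langle\xi_j\rangle$ in (\ref{eq13}) and take square roots, obtaining $\Delta x_j\,\Delta\xi_j\ge\|f\|_2^2/(4\pi)$ for each $j$; summing over $j$ gives $\sum_{j=1}^n\Delta x_j\,\Delta\xi_j\ge n\|f\|_2^2/(4\pi)$. On the other hand, Cauchy–Schwarz applied to the vectors $(\Delta x_1,\dots,\Delta x_n)$ and $(\Delta\xi_1,\dots,\Delta\xi_n)$ in $\mathbb R^n$ yields $\sum_{j=1}^n\Delta x_j\,\Delta\xi_j\le\bigl(\sum_{j}\Delta x_j^2\bigr)^{1/2}\bigl(\sum_{j}\Delta\xi_j^2\bigr)^{1/2}=\Delta x\,\Delta\xi$. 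Chaining these two inequalities and squaring gives $\Delta x^2\,\Delta\xi^2\ge n^2\|f\|_2^4/(16\pi^2)$, which is (\ref{eq15}).

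For the equality case one argues that equality in (\ref{eq15}) forces equality at both of the preceding steps. Equality in (\ref{eq13}) for every $j$ forces, via the equality clause of Proposition \ref{PropositionHeisenberg} (or directly via the characterization (\ref{eq14})), that $f$ be a generalized Gaussian $f(x)=C e^{ix\cdot\langle\xi\rangle}\exp\bigl(-\sum_{j=1}^n k_j(x_j-\langle x_j\rangle)^2\bigr)$ with $k_j>0$, obtained by solving the first-order linear relations $(\widehat X_j-\langle x_j\rangle)f=ic_j(\widehat P_j-\langle\xi_j\rangle)f$ coordinatewise and discarding the signs of $c_j$ incompatible with $f\in L^2$. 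For such an $f$ a direct Gaussian computation gives $\Delta x_j^2\propto k_j^{-1}$ and $\Delta\xi_j^2\propto k_j$ with $j$-independent proportionality constants, so the Cauchy–Schwarz equality condition — proportionality of the vectors $(\Delta x_j)_j$ and $(\Delta\xi_j)_j$ — holds precisely when all the $k_j$ coincide, say $k_j\equiv K>0$; this is exactly the isotropic form (\ref{eq16}). The converse (that (\ref{eq16}) saturates (\ref{eq15})) is then a routine Gaussian integral. I expect this last reduction to be the main obstacle: one must actually compute the coordinate spreads of an anisotropic Gaussian to see that the rigidity introduced by Cauchy–Schwarz — which has no one-dimensional analogue — forces the extremizer to be round rather than merely a product of one-dimensional minimizers.
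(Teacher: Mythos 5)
Your argument is correct and is exactly the route the paper takes: the paper derives (\ref{eq15}) by setting $a=\langle x\rangle$, $b=\langle\xi\rangle$ in (\ref{eq13}), taking square roots, summing over $j$, and applying the Cauchy--Schwarz inequality. Your treatment of the equality case --- equality in each coordinatewise bound giving the anisotropic Gaussian (\ref{eq14}), with the Cauchy--Schwarz rigidity then forcing all the $k_j$ to coincide --- is a correct elaboration of a claim the paper states without proof.
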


If we take (\ref{eq9}) into account, we realize that we can write (\ref{eq15}) in the following unusual form:
\begin{equation}
\Delta_{I_n} x^2 ~\Delta_J \xi^2 \geq  \frac{n^2 }{16 \pi^2} \|f\|_2^4.
\label{eq17}
\end{equation}

It thus seems natural to look for uncertainty principles of the form:
\begin{equation}
\Delta_{S^{(1)}} x^2 ~\Delta_{S^{(2)}} \xi^2 \geq  C \|f\|_2^4,
\label{eq18}
\end{equation}
where $S^{(1)}, ~S^{(2)} \in Sp(2n, \mathbb{R})$ and $C>0$ is a positive constant which depends only on $S^{(1)}, ~S^{(2)} $ and $n$.

A straightforward way to obtain the uncertainty principle in the LCT domain is suggested in \cite{Jaming}. Let us briefly discuss this approach. We consider the one-dimensional case. For simplicity, we shall assume that the expectation values vanish, $<\xi>=<x>=0$. Let $g(x)= \gamma_2(x)f(x)$, where $\gamma_2$ is defined in (\ref{eqExtraLCT2}). From (\ref{eqExtraLCT1},\ref{eqExtraLCT2},\ref{eqMetaplectic4}), we have:
\begin{equation}
\begin{array}{c}
\Delta_S \xi^2 = \int_{\mathbb{R}}\xi^2 |\widehat{f}_S(\xi)|^2 d \xi=\frac{1}{|b|}\int_{\mathbb{R}}\xi^2 \left|\widehat{g}\left(\frac{\xi}{b}\right) \right|^2 d \xi=\\
\\
=b^2  \int_{\mathbb{R}}\xi^2 |\widehat{g}(\xi)|^2 d \xi=b^2 \|~|\xi| \widehat{g} \|_2^2~.
\end{array}
\label{eqHUPExtra1}
\end{equation}
On the other hand we also have:
\begin{equation}
\Delta x^2 = \| ~ |x| f\|_2^2=\| ~ |x| g\|_2^2~, \hspace{0.5 cm} \|f\|_2=\|g\|_2~.
\label{eqHUPExtra2}
\end{equation}
From the Heinsenberg uncertainty principle for the Fourier transform, we have:
\begin{equation}
\|~|x| g\|_2^2 ~ \|~|\xi| \widehat{g}\|_2^2 \geq \frac{\|g\|_2^4}{16 \pi^2}~.
\label{eqHUPExtra3}
\end{equation}
Substituting (\ref{eqHUPExtra1},\ref{eqHUPExtra2}) in (\ref{eqHUPExtra3}), we obtain:
\begin{equation}
\Delta x^2 \Delta_S \xi^2 \geq \frac{b^2 \|f\|_2^4}{16 \pi^2}~,
\label{eqHUPExtra4}
\end{equation}
which is the uncertainty principle obtained by Stern \cite{Stern}.

This is certainly an expedite way to obtain the uncertainty principle in the LCT domain. However, this approach does not work well in the multidimensional case. Indeed, if we follow the previous approach for $n \geq 2$, we will obtain the following inequality:
\begin{equation}
Tr \left[B^{-1} \text{Cov}_{\xi} (\widehat{f}_S) (B^{-1})^T \right] ~\| |x|~ f\|_2^2 \geq \frac{n^2 \|f\|_2^4}{16 \pi^2}~,
\label{eqHUPExtra5}
\end{equation}
where $\text{Cov}_{\xi} (\widehat{f}_S)$ is the matrix with entries:
\begin{equation}
\left[\text{Cov}_{\xi} (\widehat{f}_S)\right]_{jk}= \int_{\mathbb{R}}   \xi_j \xi_k |\widehat{f}_S(\xi)|^2 d \xi ~.
\label{eqHUPExtra6}
\end{equation}
Admittedly, inequality (\ref{eqHUPExtra5}) can be viewed as an uncertainty principle, but it is not the usual Heisenberg uncertainty principle in terms of variances of $x$ and $\xi$, which would be the analogue of (\ref{eq13}) or (\ref{eq15}). Moreover, it is asymmetric in $x$ and $\xi$, because it uses the variance as a measure of dispersion for $x$ but a totally different measure of dispersion for $\xi$.

Finally, let us also remark that with this direct approach, the optimal constant on the right-hand side of (\ref{eqHUPExtra4}) depends only on one of the entries of the symplectic matrix $S$ and it is not clear why this happens.

Here we wish to devise an alternative method which makes full use of the symplectic invariance associated with the metaplectic operators and permits us to obtain the Heisenberg uncertainty principle in the LCT domain in arbitrary dimension. Moreover, we obtain the dependence of the optimal constant on the full symplectic matrix.

\begin{theorem}\label{TheoremHUPLCT}
For any $j,k=1, \cdots,n$, constants $a_j, b_k \in \mathbb{R}$ and all $f \in L^2 (\mathbb{R}^d)$ the following inequality holds:
\begin{equation}
\begin{array}{c}
\int_{\mathbb{R}^n} (x_j -a_j)^2 | (\widehat{S^{(1)}}f)(x)|^2 d x ~\int_{\mathbb{R}^n} (\xi_k -b_k)^2 | (\widehat{S^{(2)}}f) (\xi)|^2 d \xi  \geq \\
\\
\geq \frac{\|f\|_2^4}{16 \pi^2} \left|\left(S^{(1)} J \left(S^{(2)} \right)^T \right)_{jk} \right|^2,
\end{array}
\label{eq19A}
\end{equation}
where $S^{(1)},S^{(2)} \in Sp(2n, \mathbb{R})$.

Moreover, this inequality is sharp.
\end{theorem}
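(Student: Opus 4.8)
The plan is to reduce the statement to the standard Heisenberg inequality of Proposition \ref{PropositionHeisenberg} via the symplectic covariance of the metaplectic representation. First I would fix $j,k$ and introduce the self-adjoint operators $\widehat{A} := (\widehat{S^{(1)}})^{-1}\widehat{X}_j\,\widehat{S^{(1)}}$ acting on $\widehat{S^{(1)}}f$-land, but it is cleaner to conjugate everything onto $f$ itself: set $g = \widehat{S^{(1)}}f$ and $h = \widehat{S^{(2)}}f$, so that the left-hand side is $\|(\widehat{X}_j - a_j)g\|_2^2\,\|(\widehat{X}_k - b_k)h\|_2^2$ where here $\widehat{X}_k$ is really multiplication by $\xi_k$ on the $\widehat{S^{(2)}}f$ side. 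Writing $h = \widehat{S^{(2)}}(\widehat{S^{(1)}})^{-1}g =: \widehat{T}g$ with $T = S^{(2)}(S^{(1)})^{-1} \in Sp(2n,\mathbb{R})$, the second factor becomes $\|\widehat{T}^{-1}(\widehat{X}_k - b_k)\widehat{T}g\|_2^2$ since metaplectic operators are unitary. By the covariance relation (\ref{eqCovariance}), $\widehat{T}^{-1}\widehat{X}_k\widehat{T} = \widehat{T}^{-1}\widehat{Z}_k\widehat{T} = \sum_{\beta}T_{k\beta}\widehat{Z}_\beta$, a linear combination of the fundamental operators.

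Next I would apply Proposition \ref{PropositionHeisenberg} to the pair $\widehat{A} = \widehat{X}_j - a_j = \widehat{Z}_j - a_j$ and $\widehat{B} = \sum_\beta T_{k\beta}\widehat{Z}_\beta - b_k$, both acting on $g$, with the constants $a_j$ and $b_k$ playing the roles of $a$ and $b$. The commutator is $[\widehat{A},\widehat{B}] = \sum_\beta T_{k\beta}[\widehat{Z}_j,\widehat{Z}_\beta] = \frac{i}{2\pi}\sum_\beta T_{k\beta}J_{j\beta}\widehat{I} = \frac{i}{2\pi}(T J^T)_{kj}\widehat{I} = -\frac{i}{2\pi}(TJ)_{kj}\widehat{I}$ by (\ref{eq21A}) and antisymmetry of $J$. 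Hence Proposition \ref{PropositionHeisenberg} gives
\begin{equation*}
\|(\widehat{X}_j - a_j)g\|_2\,\Bigl\|\Bigl(\textstyle\sum_\beta T_{k\beta}\widehat{Z}_\beta - b_k\Bigr)g\Bigr\|_2 \geq \frac{1}{2}\cdot\frac{1}{2\pi}\,|(TJ)_{kj}|\,\|g\|_2^2 = \frac{\|f\|_2^2}{4\pi}\,|(TJ)_{kj}|,
\end{equation*}
using $\|g\|_2 = \|f\|_2$. Squaring, and undoing the conjugation to recognize the second norm as $\|(\xi_k - b_k)h\|_2 = \|(\xi_k - b_k)\widehat{S^{(2)}}f\|_2$, yields the left-hand side of (\ref{eq19A}) bounded below by $\frac{\|f\|_2^4}{16\pi^2}|(TJ)_{kj}|^2$. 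It remains to compute the constant: $(TJ)_{kj} = (S^{(2)}(S^{(1)})^{-1}J)_{kj}$, and using the symplectic identity $(S^{(1)})^{-1} = -J(S^{(1)})^T J = J^{-1}(S^{(1)})^T J$ (from $S^TJS = J$), one gets $TJ = S^{(2)}J^{-1}(S^{(1)})^T J J = S^{(2)}J^{-1}(S^{(1)})^T J^2$; a cleaner route is $(S^{(1)})^{-1}J = J(S^{(1)})^T$ (equivalently $S^{(1)}J(S^{(1)})^T = J$, which is just (\ref{eqSymp3}) rearranged), giving $TJ = S^{(2)}J(S^{(1)})^T$ and thus $|(TJ)_{kj}| = |(S^{(2)}J(S^{(1)})^T)_{kj}| = |(S^{(1)}J(S^{(2)})^T)_{jk}|$ by transposition and $J^T = -J$. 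This matches the claimed constant.

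For sharpness, I would exhibit equality cases. By the equality clause of Proposition \ref{PropositionHeisenberg}, equality holds iff $(\widehat{X}_j - a_j)g = ic(\sum_\beta T_{k\beta}\widehat{Z}_\beta - b_k)g$ for some real $c$; pulling this back through $\widehat{S^{(1)}}$ and using that the solutions of such first-order equations are Gaussians, one finds that taking $f = (\widehat{S^{(1)}})^{-1}\phi$ for a suitable generalized Gaussian $\phi$ (of the form (\ref{eq14}) adapted to the index pair and the linear combination) saturates the bound; concretely, when $S^{(1)} = I_n$ and $S^{(2)} = J$ this reduces to the classical minimizer (\ref{eq14}). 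The main obstacle I anticipate is purely bookkeeping: correctly tracking which ``$\widehat{X}_j$'' lives on which $L^2$ copy and verifying the chain of symplectic identities so that the constant comes out as $(S^{(1)}J(S^{(2)})^T)_{jk}$ rather than some transpose or sign variant — the conceptual content is entirely contained in (\ref{eqCovariance}), (\ref{eq21A}), and Proposition \ref{PropositionHeisenberg}, with attention also needed for the domain issues (the inequality is asserted for all $f \in L^2$, with both sides possibly $+\infty$, so the finite-moment reduction is routine).
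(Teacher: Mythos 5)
Your proof is correct and follows essentially the same route as the paper: both reduce the claim to Proposition \ref{PropositionHeisenberg} using the commutation relations (\ref{eq21A}) and the covariance property (\ref{eqCovariance}), and both obtain sharpness from the equality clause of that proposition. The only (cosmetic) difference is that you conjugate everything onto $g=\widehat{S^{(1)}}f$ and work with the single matrix $T=S^{(2)}(S^{(1)})^{-1}$, which forces the extra symplectic identity $(S^{(1)})^{-1}J=J(S^{(1)})^{T}$ to recover the constant, whereas the paper applies the proposition symmetrically to $\widehat{A}=\sum_{\alpha}S^{(1)}_{j,\alpha}\widehat{Z}_{\alpha}$ and $\widehat{B}=\sum_{\beta}S^{(2)}_{k,\beta}\widehat{Z}_{\beta}$ acting on $f$ itself, so that $\bigl(S^{(1)}J(S^{(2)})^{T}\bigr)_{jk}$ appears directly.
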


\begin{proof}
If either $\int_{\mathbb{R}^n} (x_j -a_j)^2 | (\widehat{S^{(1)}}f) (x)|^2 d x= \infty$ or $ \int_{\mathbb{R}^n} (\xi_k -b_k)^2 | (\widehat{S^{(2)}}f) (\xi)|^2 d \xi  = \infty$, then there is nothing to prove. So we shall assume that both are finite.

To prove (\ref{eq19A}), we shall apply (\ref{eq11}) to the operators
\begin{equation}
\widehat{A}=\sum_{\alpha=1}^{2n} S^{(1)}_{j, \alpha} \widehat{Z}_{\alpha}, \hspace{1 cm}
\widehat{B}=\sum_{\beta=1}^{2n} S^{(2)}_{k, \beta} \widehat{Z}_{\beta},
\label{eq20}
\end{equation}
for some fixed $j,k=1, \cdots, n$ and where the operators $\widehat{Z}_{\alpha}$ are given by (\ref{eq21}).

We have from (\ref{eq21A}):
\begin{equation}
\begin{array}{c}
\left[\widehat{A},\widehat{B} \right]= \sum_{1 \leq \alpha, \beta \leq 2n} S^{(1)}_{j, \alpha}S^{(2)}_{k, \beta} \left[\widehat{Z}_{\alpha},\widehat{Z}_{\beta} \right] = \\
\\
=\frac{i}{2 \pi} \sum_{1 \leq \alpha, \beta \leq 2n} S^{(1)}_{j, \alpha}S^{(2)}_{k, \beta} J_{\alpha \beta}= \frac{i}{2 \pi} \left(S^{(1)} J \left(S^{(2)}\right)^T \right)_{jk}.
\end{array}
\label{eq22}
\end{equation}
On the other hand, we have from (\ref{eqCovariance}) and (\ref{eq20}) that:
\begin{equation}
\widehat{A}= \left(\widehat{S^{(1)}}\right)^{\ast} \widehat{X}_j \widehat{S^{(1)}}~, \hspace{0.5 cm} \widehat{B}= \left(\widehat{S^{(2)}}\right)^{\ast} \widehat{X}_k \widehat{S^{(2)}}~. 
\label{eq22A}
\end{equation}
Since $\widehat{S^{(1)}}$ and $\widehat{S^{(2)}}$ are unitary operators, we conclude that:
\begin{equation}
\begin{array}{c}
\|(\widehat{A}-a_j)f \|_2^2 = \| \left(\left(\widehat{S^{(1)}}\right)^{\ast} \widehat{X}_j \widehat{S^{(1)}} -a_j \right)f \|_2^2= \\
\\
=\|\left(\widehat{X}_j -a_j \right)\widehat{S^{(1)}}f \|_2^2= \int_{\mathbb{R}^n} |x_j-a_j|^2 |(\widehat{S^{(1)}}f) (x)|^2 d x~,
\end{array}
\label{eq23}
\end{equation}
and
\begin{equation}
\begin{array}{c}
\|(\widehat{B}-b_k)f \|_2^2 = 
\| \left(\left(\widehat{S^{(2)}}\right)^{\ast} \widehat{X}_k \widehat{S^{(2)}} -b_k \right)f \|_2^2= \\
\\
=\|\left(\widehat{X}_k -b_k \right)\widehat{S^{(2)}}f \|_2^2= \int_{\mathbb{R}^n} |\xi_k-b_k|^2 |(\widehat{S^{(2)}}f) (\xi)|^2 d \xi~.
\end{array}
\label{eq24}
\end{equation}
From (\ref{eq11},\ref{eq22},\ref{eq23},\ref{eq24}), we recover (\ref{eq19A}).

That this inequality is sharp is an immediate consequence of Proposition \ref{PropositionHeisenberg}. To obtain the optimizers, one just has to apply (\ref{eq12}). 
\end{proof}

\begin{remark}\label{Remark0}
Notice that the author of \cite{Zhang1,Zhang2} already used a metaplectic approach to the uncertainty principles in the LCT domain and obtained interesting results. However, we feel that our approach is more geometric and leads to a much more compact form of these uncertainty principles for all signals (real or complex), in arbitrary dimension and it holds for all metaplectic transformations and not just linear canonical transforms.
\end{remark}

\begin{remark}\label{Remark1}
As a particular case, if we take $S^{(1)}=I_n$ and $S^{(2)}=J$, we recover:
\begin{equation}
\int_{\mathbb{R}^n}(x_j-a_j)^2 |f(x)|^2 dx ~ \int_{\mathbb{R}^n}(\xi_k-b_k)^2 |\widehat{f}( \xi)|^2 d \xi  \geq \frac{\|f\|_2^4}{16 \pi^2} \delta_{j,k},
\label{eq30}
\end{equation}
as expected.

Another example is in dimension $n=1$, with $S^{(1)}=I_2$ and
\begin{equation}
S^{(2)}= \left(
\begin{array}{c c}
a& b\\
c& d
\end{array}
\right)~,
\label{eqExample1}
\end{equation}
with $b \neq 0$ and $ad-bc =1$.

We then have from eq.(\ref{eq19}) that:
\begin{equation}
\Delta x^2 ~ \Delta_{S^{(2)}} \xi^2 \geq \frac{b^2}{16 \pi^2} \| f\|_2^4,
\label{eqExample1}
\end{equation}
which is the result obtained by Stern \cite{Stern}.
\end{remark}

As a Corollary we obtain the following $n$-dimensional version of the uncertainty principle in the LCT domain.

\begin{corollary}\label{Corollary1}
Let $f \in L^2 (\mathbb{R}^n)$. Then we have:
\begin{equation}
\Delta_{S^{(1)}} x \Delta_{S^{(2)}} \xi \geq  \frac{\|f\|_2^2}{4 \pi} \sum_{j=1}^n \left|\left( S^{(1)} J \left(S^{(2)} \right)^T \right)_{jj} \right|.
\label{eq31}
\end{equation}
\end{corollary}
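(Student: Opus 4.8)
The plan is to derive Corollary \ref{Corollary1} from Theorem \ref{TheoremHUPLCT} exactly in the way the $n$-dimensional Heisenberg principle (Corollary \ref{CorollaryHeisenberg}) was derived from the scalar inequality (\ref{eq13}): namely, specialize the constants to the expectation values, take square roots, sum the diagonal terms, and collapse the sum of products into a product of sums via Cauchy--Schwarz. First I would set $a_j = \langle x_j\rangle$ (the expectation value computed from $|(\widehat{S^{(1)}}f)(x)|^2/\|f\|_2^2$) and $b_k = \langle\xi_k\rangle$ (computed from $|(\widehat{S^{(2)}}f)(\xi)|^2/\|f\|_2^2$), and restrict attention to the diagonal $j=k$ in (\ref{eq19A}). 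This gives, for each $j=1,\dots,n$,
\begin{equation}
\left(\int_{\mathbb{R}^n}(x_j-\langle x_j\rangle)^2|(\widehat{S^{(1)}}f)(x)|^2\,dx\right)\left(\int_{\mathbb{R}^n}(\xi_j-\langle\xi_j\rangle)^2|(\widehat{S^{(2)}}f)(\xi)|^2\,d\xi\right)\geq \frac{\|f\|_2^4}{16\pi^2}\left|\left(S^{(1)}J(S^{(2)})^T\right)_{jj}\right|^2.
\label{eqPlanA}
\end{equation}

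Next I would take square roots of both sides, writing $u_j$ for the square root of the first factor and $v_j$ for the square root of the second, so that $u_j v_j \geq \frac{\|f\|_2^2}{4\pi}\bigl|(S^{(1)}J(S^{(2)})^T)_{jj}\bigr|$ for each $j$. Summing over $j$ and then applying the Cauchy--Schwarz inequality $\sum_j u_j v_j \leq \bigl(\sum_j u_j^2\bigr)^{1/2}\bigl(\sum_j v_j^2\bigr)^{1/2}$, one gets
\begin{equation}
\left(\sum_{j=1}^n u_j^2\right)^{1/2}\left(\sum_{j=1}^n v_j^2\right)^{1/2}\geq \sum_{j=1}^n u_j v_j \geq \frac{\|f\|_2^2}{4\pi}\sum_{j=1}^n\left|\left(S^{(1)}J(S^{(2)})^T\right)_{jj}\right|.
\end{equation}
The final step is to recognize $\sum_j u_j^2 = \Delta_{S^{(1)}}x^2$ and $\sum_j v_j^2 = \Delta_{S^{(2)}}\xi^2$ directly from the definition (\ref{eq9}) — indeed $\Delta_{S^{(1)}}x^2 = \int|x-\langle x\rangle|^2|(\widehat{S^{(1)}}f)|^2 = \sum_j\int(x_j-\langle x_j\rangle)^2|(\widehat{S^{(1)}}f)|^2$ — so the left-hand side is exactly $\Delta_{S^{(1)}}x\cdot\Delta_{S^{(2)}}\xi$, yielding (\ref{eq31}).

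There is no serious obstacle here; the only points requiring a word of care are the degenerate cases. If either dispersion is infinite the inequality is trivial, and if $\Delta_{S^{(1)}}x = 0$ or $\Delta_{S^{(2)}}\xi = 0$ then some $u_j v_j$ vanishes in a way forcing the right side to vanish too (or the statement is vacuous), so one may assume both are finite and positive. One should also note that the Cauchy--Schwarz step is the reason the bound in (\ref{eq31}) is generally not sharp even though (\ref{eq19A}) is: equality would require the vectors $(u_j)$ and $(v_j)$ to be proportional in addition to equality in each (\ref{eqPlanA}), which over-constrains $f$ unless the diagonal structure of $S^{(1)}J(S^{(2)})^T$ is compatible. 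I would simply remark this in passing rather than chase the optimizers, since the Corollary as stated only claims the inequality.
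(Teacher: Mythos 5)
Your proposal is correct and follows exactly the paper's own argument: set $j=k$ in (\ref{eq19A}) with $a_j,b_j$ equal to the expectation values, take square roots, sum over $j$, and apply the Cauchy--Schwarz inequality to bound the sum of products by $\Delta_{S^{(1)}}x\,\Delta_{S^{(2)}}\xi$. Your closing remark on why sharpness is generally lost in the Cauchy--Schwarz step is a sensible observation not made in the paper, but the proof itself is the same.
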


\begin{proof}
If we set $j=k$ in (\ref{eq19A}), take the square root and add over $j$, we obtain:
\begin{equation}
\begin{array}{c}
\sum_{j=1}^n \left( \int_{\mathbb{R}^n} (x_j -a_j)^2 | (\widehat{S^{(1)}}f) (x)|^2 d x \right)^{1/2} \left(\int_{\mathbb{R}^n} (\xi_j -b_j)^2 | (\widehat{S^{(2)}}f) (\xi)|^2 d \xi  \right)^{1/2} \geq \\
 \\
 \geq \frac{\|f\|_2^2}{4 \pi} \sum_{j=1}^n  \left|\left(S^{(1)} J \left(S^{(2)} \right)^T \right)_{jj} \right|
\end{array}
\label{eq32}
\end{equation}
By the Cauchy-Schwarz inequality, the left-hand side of (\ref{eq32}) is dominated by
\begin{equation}
\begin{array}{c}
\Delta_{S^{(1)}} x \Delta_{S^{(2)}} \xi=  \\
\\
=\left( \sum_{j=1}^n  \int_{\mathbb{R}^n} (x_j -a_j)^2 | (\widehat{S^{(1)}}f) (x)|^2 d x \right)^{1/2} \left(\sum_{k=1}^n \int_{\mathbb{R}^n} (\xi_k -b_k)^2 | (\widehat{S^{(2)}}f) (\xi)|^2 d \xi  \right)^{1/2}
\end{array}
\label{eq33}
\end{equation}
and we recover (\ref{eq31}).
\end{proof}

\section{Uncertainty principle with covariance matrices}

The Heisenberg uncertainty principle does not include the time-frequency correlations. Thus its form will change in general under a linear canonical transformation. The Robertson-Schr\"odinger uncertainty principle \cite{Gosson} on the other hand is invariant under linear symplectic transformations and is stronger than the Heisenberg uncertainty principle.

Let $\Sigma=(\Sigma_{\alpha,\beta})$ be the covariance matrix
\begin{equation}
\begin{array}{c}
\Sigma_{\alpha, \beta}=\langle \left(\frac{\widehat{Z}_{\alpha}\widehat{Z}_{\beta}+\widehat{Z}_{\beta} \widehat{Z}_{\alpha}}{2}\right) f,f \rangle=\\
\\
= \int_{\mathbb{R}^{2n}} z_{\alpha}z_{\beta}W_{\sigma} f (z) dz, \hspace{0.5 cm} \alpha, \beta=1, \cdots, 2n~,
\end{array}
\label{eq38}
\end{equation}
where the operators $\widehat{Z}_{\alpha}$ are as in (\ref{eq21},\ref{eq21A}), and where we assumed that the expectation value of $\widehat{Z}_{\alpha}$ is $<\widehat{Z}_{\alpha}>=0$. This can be easily achieved by a phase-space translation. We have also assumed that the function $f$ is normalized: $\|f\|_2=1$.

The Robertson-Schr\"odinger uncertainty principle is expressed in terms of the following matrix inequality:
\begin{equation}
\Sigma + \frac{i}{4 \pi} J \geq 0~.
\label{eq41}
\end{equation}
It is easy to check that this inequality is invariant under the linear symplectic transformation:
\begin{equation}
z \mapsto Sz~,\hspace{1 cm} S \in  \operatorname*{Sp}(2n,\mathbb{R})~.
\label{eq38A}
\end{equation}
Moreover it is stronger than the Heisenberg uncertainty principle (\ref{eq15}). To illustrate this let us consider the $n=1$ case. We set:
\begin{equation}
\Sigma_{1,1}= \Delta x^2~, \hspace{0.5 cm} \Sigma_{2,2}= \Delta \xi^2~. 
\label{eq38B}
\end{equation}
From (\ref{eq41}), we have:
\begin{equation}
\det \left(
\begin{array}{c c}
\Delta x^2 & \Sigma_{1,2}+\frac{i}{4\pi}\\
\Sigma_{1,2}-\frac{i}{4 \pi} &\Delta \xi^2  
\end{array}
\right) \geq 0      \Leftrightarrow \Delta x^2\Delta \xi^2 \geq  \Sigma_{1,2}^2+ \frac{1}{16 \pi^2}~,
\label{eq38C}
\end{equation}
where $\Sigma_{1,2}= \langle \left(\frac{\widehat{X}\widehat{P}+\widehat{P} \widehat{X}}{2}\right) f,f \rangle$, and where we used the fact that $\Sigma_{1,2}=\Sigma_{2,1}$.
This inequality is manifestly tighter than (\ref{eq15}).

Let us now see how this can be generalized for our framework.

The uncertainty principle (\ref{eq31}) does not include the correlations between the variables $\xi_j^{(1)}= \sum_{\alpha=1}^{2n} S^{(1)}_{j, \alpha} z_{\alpha}$ and $\xi_k^{(2)}= \sum_{\beta=1}^{2n} S^{(2)}_{k, \beta} z_{\beta}$. Thus the form of (\ref{eq31}) will change in general under a linear coordinate transformation. In analogy with the Robertson-Schr\"odinger uncertainty principle, we will now derive an uncertainty principle involving the correlations of $\xi_j^{(1)}$ and $\xi_j^{(2)}$ and which is invariant under a specific group of affine linear coordinate transformations. Moreover, it is stronger than (\ref{eq31}). As previously, we shall always assume that the signal is normalized $\|f\|_2=1$.

Let us define the joint variable $\gamma=(\xi^{(1)},\xi^{(2)})= \left( \xi_1^{(1)}, \cdots,\xi_n^{(1)}, \xi_1^{(2)}, \cdots,\xi_n^{(2)}\right) \in \mathbb{R}^{2n}$, which is the Weyl-symbol of the operator $\widehat{\Gamma}= (\widehat{\Gamma}_1,\cdots, \widehat{\Gamma}_{2n})$:
\begin{equation}
\widehat{\Gamma}_j= \sum_{\alpha=1}^{2n} S^{(1)}_{j, \alpha}
\widehat{Z}_{\alpha} ~,  \hspace{0.5 cm} \widehat{\Gamma}_{n+k}= \sum_{\beta=1}^{2n} S^{(2)}_{k, \beta} \widehat{Z}_{\beta}~, ~j,k=1, \cdots, n~.
\label{eq33A}
\end{equation} 

If we set (as in eq.(\ref{eqSymp3.1})),
\begin{equation}
S^{(1)}= \left(
\begin{array}{c c}
A^{(1)} &B^{(1)}\\
C^{(1)} & D^{(1)}
\end{array}
\right)~, \hspace{0.5 cm} S^{(2)}= \left(
\begin{array}{c c}
A^{(2)} &B^{(2)}\\
C^{(2)} & D^{(2)}
\end{array}
\right)~, 
\label{eqMatrices}
\end{equation}
then we can write:
\begin{equation}
\gamma = D^{(1,2)} z,
\label{eq34}
\end{equation}
where
\begin{equation}
 D^{(1,2)} =\left(
 \begin{array}{c c}
 A^{(1)} & B^{(1)}\\
 & \\
 A^{(2)} &  B^{(2)}
 \end{array}
 \right).
\label{eq35}
\end{equation}
Since $<\gamma> = D^{(1,2)} <z>$, we may, as before, perform a time-frequency translation such that $<\gamma>= <z>=0$. The covariance matrix $\Upsilon= (\Upsilon_{\alpha \beta})$ of the variables $\gamma$ may thus be computed as:
\begin{equation}
\begin{array}{c}
\Upsilon_{\alpha \beta}= \langle \left(\frac{\widehat{\Gamma}_{\alpha}\widehat{\Gamma}_{\beta}+ \widehat{\Gamma}_{\beta}\widehat{\Gamma}_{\alpha}}{2}\right) f,f \rangle =\\
\\
= \int_{\mathbb{R}^{2n}} \gamma_{\alpha}\gamma_{\beta}W_{\sigma} f (z) dz = \sum_{1 \leq \varpi, \lambda \leq 2n} D_{\alpha, \tau}^{(1,2)}D_{\beta, \lambda}^{(1,2)} \int_{\mathbb{R}^{2n}} z_{\tau}z_{\lambda}W_{\sigma}f (z) dz.
\end{array}
\label{eq36}
\end{equation}
That is:
\begin{equation}
\Upsilon = D^{(1,2)} \Sigma \left(D^{(1,2)}\right)^T,
\label{eq37}
\end{equation}

We then have the following uncertainty principle.
\begin{theorem}\label{TheoremRobertsonSchrodinger}
Let $f$ be such that $\|f\|_2=1$ and $\int_{\mathbb{R}^{2n}} (1+ |z|^2) W_{\sigma} f(z) < \infty$. Let $\Upsilon$ be the matrix defined as in (\ref{eq37}). Then we have the following matrix inequality:
\begin{equation}
\Upsilon + \frac{i}{4 \pi} \xi \geq 0,
\label{eq39}
\end{equation}
where
\begin{equation}
\Omega= D^{(1,2)} J \left(D^{(1,2)} \right)^T.
\label{eq40}
\end{equation}
\end{theorem}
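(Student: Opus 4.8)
The plan is to obtain (\ref{eq39}) as a direct consequence of the classical Robertson--Schr\"odinger matrix inequality (\ref{eq41}), by transporting it through the linear map $D^{(1,2)}$ of (\ref{eq35}). This is the exact analogue, at the level of covariance matrices, of the computation (\ref{eq36})--(\ref{eq37}) that produced $\Upsilon$ from $\Sigma$, and it is the same mechanism that makes (\ref{eq41}) itself invariant under $\operatorname*{Sp}(2n,\mathbb{R})$.

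First I would record the algebraic identity relating the two ``shifted'' covariance matrices. Combining (\ref{eq37}) with (\ref{eq40}),
\begin{equation}
\Upsilon + \frac{i}{4\pi}\Omega
= D^{(1,2)}\Sigma\bigl(D^{(1,2)}\bigr)^{T} + \frac{i}{4\pi}\,D^{(1,2)}J\bigl(D^{(1,2)}\bigr)^{T}
= D^{(1,2)}\Bigl(\Sigma + \frac{i}{4\pi}J\Bigr)\bigl(D^{(1,2)}\bigr)^{T}.
\end{equation}
Thus $\Upsilon + \frac{i}{4\pi}\Omega$ is the congruence of the Hermitian matrix $M := \Sigma + \frac{i}{4\pi}J$ by the \emph{real} matrix $D^{(1,2)}$. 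Two elementary facts then finish the argument. (i) Because $D^{(1,2)}$ is real, $D^{(1,2)}M\bigl(D^{(1,2)}\bigr)^{T}$ is again Hermitian --- consistently, $\Upsilon$ is real symmetric as a covariance matrix and $\Omega$ is real antisymmetric since $J$ is, so $\frac{i}{4\pi}\Omega$ is Hermitian. (ii) Congruence by any real matrix preserves positive semidefiniteness: given $v\in\mathbb{C}^{2n}$ put $u=\bigl(D^{(1,2)}\bigr)^{T}v$; then $u^{*}=v^{*}D^{(1,2)}$ (again by reality of $D^{(1,2)}$), whence $v^{*}\bigl(\Upsilon + \frac{i}{4\pi}\Omega\bigr)v = u^{*}Mu \geq 0$ by (\ref{eq41}). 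Note that invertibility of $D^{(1,2)}$ is not needed, which is important since $D^{(1,2)}$ need not be symplectic or even square-invertible.

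Second, I would check the standing hypotheses. The assumption $\|f\|_2=1$ together with $\int_{\mathbb{R}^{2n}}(1+|z|^{2})W_{\sigma}f(z)\,dz<\infty$ guarantees that the second moments in (\ref{eq38}) converge absolutely, so $\Sigma$ --- and hence $\Upsilon=D^{(1,2)}\Sigma\bigl(D^{(1,2)}\bigr)^{T}$ --- is well defined, and that (\ref{eq41}) is applicable to $f$. The normalization $<\gamma>=<z>=0$ used in defining $\Upsilon$ via (\ref{eq34}) costs nothing: it is achieved by a phase-space translation $\widehat{T}^{\sigma}(z_{0})$, which is unitary and merely shifts $W_{\sigma}f$ (cf. (\ref{eqReview25A})), leaving $\|f\|_2$ and the central moments unchanged. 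There is essentially no hard step here: the only care needed is the real-versus-complex bookkeeping ensuring the congruence lands back among Hermitian positive matrices. (Incidentally, the ``$\xi$'' appearing in (\ref{eq39}) is a misprint for the matrix $\Omega$ of (\ref{eq40}).) Should a self-contained derivation be preferred over quoting (\ref{eq41}), the same conclusion follows by applying Proposition \ref{PropositionHeisenberg} together with the Heisenberg relations (\ref{eq21A}) to arbitrary real linear combinations of the operators $\widehat{\Gamma}_{\alpha}=\sum_{\beta}D^{(1,2)}_{\alpha\beta}\widehat{Z}_{\beta}$ of (\ref{eq33A}) --- which is just the operator-level shadow of the congruence above.
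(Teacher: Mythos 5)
Your proposal is correct and follows exactly the paper's own (very terse) argument: transport the classical Robertson--Schr\"odinger inequality (\ref{eq41}) through the congruence $\Upsilon+\frac{i}{4\pi}\Omega=D^{(1,2)}\bigl(\Sigma+\frac{i}{4\pi}J\bigr)\bigl(D^{(1,2)}\bigr)^{T}$, using (\ref{eq37}) and (\ref{eq40}). You merely spell out the details the paper leaves implicit (Hermiticity, preservation of positive semidefiniteness under real congruence, convergence of the moments), and you correctly identify the ``$\xi$'' in (\ref{eq39}) as a typo for $\Omega$.
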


\begin{proof}
The covariance matrix $\Sigma$ satisfies the Robertson-Schr\"odinger uncertainty principle (\ref{eq41}).
From (\ref{eq41}), (\ref{eq37}) and (\ref{eq40}), we conclude that $\Upsilon$ satisfies (\ref{eq39}).
\end{proof}

\begin{remark}\label{Remark2}
As a particular example, let us consider again the one dimensional case with
\begin{equation}
S^{(p)} = \left(
\begin{array}{c c}
a_p & b_p\\
c_p & d_p
\end{array}
\right)~,
\label{eq42}
\end{equation}
for $p=1,2$, and
\begin{equation}
D^{(1,2)} = \left(
\begin{array}{c c}
a_1 & b_1\\
a_2 & b_2
\end{array}
\right)~.
\label{eq42A}
\end{equation}
A simple calculation yields:
\begin{equation}
\Omega = (a_2 b_1-a_1 b_2 ) J.
\label{eq43}
\end{equation}
Consequently, (\ref{eq39}) reads:
\begin{equation}
\left(
\begin{array}{c c}
\Upsilon_{1,1} & \Upsilon_{1,2}+ \frac{i}{4\pi} (a_2 b_1-a_1 b_2 ) \\
& \\
\Upsilon_{1,2}- \frac{i}{4\pi} (a_2 b_1-a_1 b_2 ) & \Upsilon_{2,2}
\end{array}
\right) \geq 0.
\label{eq44}
\end{equation}
Since $\Upsilon_{1,1}= \left(\Delta \xi^{(1)}\right)^2$ and $\Upsilon_{2,2}= \left(\Delta \xi^{(2)}\right)^2$, we obtain from (\ref{eq44}) that:
\begin{equation}
 \left(\Delta \xi^{(1)}\right)^2 ~ \left(\Delta \xi^{(2)}\right)^2 \geq \left(\Upsilon_{1,2}\right)^2 + \frac{1}{16 \pi^2} (a_2 b_1-a_1 b_2 )^2,
\label{eq45}
\end{equation}
which is the result of \cite{Guanlei3}.
\end{remark}

\section{An interpretation in terms of non-standard symplectic vector spaces}

It is interesting and possibly fruitful to interpret inequalities such as (\ref{eq19A}) or (\ref{eq39}) as arising from a set of generalized coordinates which do not satisfy the ordinary canonical commutation relations. Let us consider the canonical variables $\widehat{Z}= \left( \widehat{X},\widehat{P} \right)$ as defined in (\ref{eq21},\ref{eq21A}).

On the other hand, let us consider the variables $(\Gamma_{\alpha})$ as in (\ref{eq34}):
\begin{equation}
\widehat{\Gamma}_{\alpha} = \sum_{\beta =1}^{2n} D_{\alpha,\beta}^{(1,2)} \widehat{Z}_{\beta}, \hspace{1 cm} \alpha=1, \cdots, 2n.
\label{eq48}
\end{equation}
In that case we obtain the commutation relations:
\begin{equation}
\left[\widehat{\Gamma}_{\alpha},\widehat{\Gamma}_{\beta} \right]= \frac{i}{2 \pi} \Omega_{\alpha, \beta}, \hspace{1 cm} \alpha, \beta=1, \cdots,2n,
\label{eq49}
\end{equation}
where the matrix $\Omega= \left( \Omega_{\alpha, \beta} \right)$ is given by (\ref{eq40}).

If $\det D^{(1,2)} \neq 0$, then the matrix $\Omega$ defines a symplectic form. We have the two symplectic forms on $\mathbb{R}^{2n}$:
\begin{equation}
\sigma (z, z^{\prime})= z \cdot J^{-1} z, \hspace{0.5 cm} \text{ and } \hspace{0.5 cm}\vartheta (z, z^{\prime})= z \cdot \Omega^{-1} z,
\label{eq50}
\end{equation}
for $z,z^{\prime} \in \mathbb{R}^{2n}$, and $D^{(1,2)}$ is a corresponding Darboux matrix. Recall that (see section \ref{SectionSymGeom}), the $\vartheta$-symplectic group $\operatorname*{Sp}(2n,\vartheta)$ is the group of $2n \times 2n$ real matrices $P$ that satisfy (\ref{eqSymp7}). 

We then have:

\begin{proposition}\label{Proposition1}
The uncertainty principle (\ref{eq39}) is invariant under the linear transformation
\begin{equation}
\gamma \mapsto P \gamma,
\label{eq54}
\end{equation}
where $P \in \operatorname*{Sp}(2n,\vartheta)$.
\end{proposition}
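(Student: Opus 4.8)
The plan is to deduce the invariance of (\ref{eq39}) directly from the defining relation (\ref{eqSymp7}) of the $\vartheta$-symplectic group together with the transformation law (\ref{eq37}) for the covariance matrix. The key observation is that the inequality (\ref{eq39}) has exactly the same algebraic shape as the standard Robertson--Schr\"odinger inequality (\ref{eq41}), with $J$ replaced by $\Omega$, so one expects $\operatorname*{Sp}(2n,\vartheta)$ to play the role that $\operatorname*{Sp}(2n,\mathbb{R})$ plays for (\ref{eq41}).

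First I would fix $P \in \operatorname*{Sp}(2n,\vartheta)$ and consider the transformed variable $\gamma' = P\gamma$. Its covariance matrix is $\Upsilon' = P \Upsilon P^T$, by the same elementary computation (linearity of the integral against $W_\sigma f$) that gave (\ref{eq36})--(\ref{eq37}); equivalently $\gamma' = (P D^{(1,2)}) z$, so $\Upsilon' = (PD^{(1,2)}) \Sigma (PD^{(1,2)})^T = P\Upsilon P^T$. Next I would conjugate the inequality (\ref{eq39}) by $P$: since $P$ is a real invertible matrix and Hermitian positive semidefiniteness is preserved under the congruence $M \mapsto P M P^T$ (for $v \in \mathbb{C}^{2n}$, $v^\dagger (P M P^T) v = (P^T v)^\dagger \,\overline{M}^{\,T}\,$-type manipulation; more directly $v^* P M P^T v = w^* M w$ with $w = P^T v$, using $M = M^\dagger$), we get from $\Upsilon + \tfrac{i}{4\pi}\Omega \ge 0$ that
\begin{equation}
P\Upsilon P^T + \frac{i}{4\pi} P \Omega P^T \geq 0.
\label{eqProofPropConj}
\end{equation}
Now invoke (\ref{eqSymp7}): $P \in \operatorname*{Sp}(2n,\vartheta)$ means precisely $P\Omega P^T = \Omega$. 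Substituting this and $P\Upsilon P^T = \Upsilon'$ into (\ref{eqProofPropConj}) yields $\Upsilon' + \tfrac{i}{4\pi}\Omega \ge 0$, which is exactly the uncertainty principle (\ref{eq39}) for the transformed variables. This establishes the claimed invariance.

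The only genuinely delicate point is the direction of the hypothesis in (\ref{eqSymp7}): the displayed equivalence there reads $\vartheta(Pz,Pz') = \vartheta(z,z') \Leftrightarrow \Omega = P\Omega P^T$, and one should double-check that this is the form actually needed (as opposed to $P^T \Omega^{-1} P = \Omega^{-1}$, which is the same statement after inverting and transposing, using $\Omega^T = -\Omega$). Everything else is routine: the congruence-invariance of positive semidefiniteness is standard linear algebra, and the transformation rule $\Upsilon \mapsto P\Upsilon P^T$ is immediate from the definition (\ref{eq36}) since $\widehat{\Gamma} \mapsto P\widehat{\Gamma}$ componentwise and the symmetrized products transform accordingly. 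I would also remark in passing that, because (\ref{eq39}) implies (\ref{eq31}) by the same $2\times 2$ minor argument used in (\ref{eq38C}), this proposition also identifies a natural symmetry group under which the Heisenberg-type bound (\ref{eq31}) is preserved, though strictly only the stronger statement (\ref{eq39}) is invariant as an identity of matrix inequalities.
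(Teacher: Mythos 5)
Your argument is correct and is essentially identical to the paper's: both rest on the transformation law $\Upsilon \mapsto P\Upsilon P^T$, the invariance of Hermitian positive semidefiniteness under the congruence $M \mapsto PMP^T$, and the defining relation $P\Omega P^T = \Omega$ from (\ref{eqSymp7}) (the paper merely writes the same computation by factoring $P$ and $P^T$ out of the sum rather than conjugating the inequality forward). Nothing further is needed.
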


\begin{proof}
Under the linear transformation (\ref{eq54}), we have:
\begin{equation}
\Upsilon \mapsto P \Upsilon P^T.
\label{eq55}
\end{equation}
Consequently:
\begin{equation}
P \Upsilon P^T + \frac{i}{4 \pi} \Omega = P \left(\Upsilon + \frac{i}{4 \pi} P^{-1}\Omega (P^{-1})^T \right) P^T= P \left(\Upsilon + \frac{i}{4 \pi} \Omega \right) P^T \geq 0,
\label{eq56}
\end{equation}
which proves the result.
\end{proof}

From section \ref{SectionWeylNonStandard}, we have:

\begin{definition}\label{DefinitionNewBilinearDist}
The Wigner function on the symplectic vector space $\left(\mathbb{R}^{2n}; \vartheta \right)$ associated with a given signal $f \in L^2 (\mathbb{R}^n)$ is given by:
\begin{equation}
W_{\vartheta} f (z)= \frac{1}{\sqrt{\det \Omega}} W_{\sigma} f\left( \left(D^{(1,2)} \right)^{-1} z \right)~.
\label{eq56}
\end{equation}
\end{definition}

This definition provides a practical interpretation for the problems we addressed in this paper. We just have to think that we now have a Wigner function $W_{\vartheta} f$ defined for a different symplectic structure. Thus, for example, we may compute the covariance matrix $\Upsilon$ as we have done in (\ref{eq36}):
\begin{equation}
\Upsilon_{\alpha \beta}= \int_{\mathbb{R}^{2n}} \gamma_{\alpha}\gamma_{\beta}W_{\sigma} f (z) dz = \sum_{1 \leq \tau, \lambda \leq 2n} D_{\alpha, \tau}^{(1,2)}D_{\beta, \lambda}^{(1,2)} \int_{\mathbb{R}^{2n}} z_{\tau}z_{\lambda}W_{\sigma} f (z) dz,
\label{eq57}
\end{equation}
or, alternatively, we may evaluate it as an ordinary covariance matrix:
\begin{lemma}\label{LemmaCovarianceMatrix}
The covariance matrix (\ref{eq57}) can be evaluated as an ordinary covariance matrix for the bilinear distribution $W_{\vartheta}f$:
\begin{equation}
\Upsilon_{\alpha \beta}= \int_{\mathbb{R}^{2n}} z_{\alpha}z_{\beta}W_{\vartheta} f (z) dz .
\label{eq58}
\end{equation}
\end{lemma}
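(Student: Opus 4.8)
The plan is to prove Lemma \ref{LemmaCovarianceMatrix} by a single invertible linear change of variables in the definition (\ref{eq57}), the point being to match the Jacobian factor against the normalization $(\det\Omega)^{-1/2}$ that is already built into $W_\vartheta f$. First I would record the two elementary facts that make the bookkeeping close: from (\ref{eq40}), $\Omega = D^{(1,2)} J (D^{(1,2)})^T$, so $\det\Omega = (\det D^{(1,2)})^2$, which is positive since $\Omega$ is nondegenerate and skew-symmetric (its determinant is the square of its Pfaffian); hence $|\det D^{(1,2)}| = \sqrt{\det\Omega}$. Second, the variables are related linearly by $\gamma = D^{(1,2)} z$, so under the substitution $w = D^{(1,2)} z$ the component $\gamma_\alpha$ simply becomes the coordinate function $w_\alpha$.

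Then I would start from $\Upsilon_{\alpha\beta} = \int_{\mathbb{R}^{2n}} \gamma_\alpha \gamma_\beta\, W_\sigma f(z)\, dz$ and carry out the substitution $w = D^{(1,2)} z$, so that $z = (D^{(1,2)})^{-1} w$ and $dz = (\det\Omega)^{-1/2}\, dw$. This turns the integral into $(\det\Omega)^{-1/2} \int_{\mathbb{R}^{2n}} w_\alpha w_\beta\, W_\sigma f\big((D^{(1,2)})^{-1} w\big)\, dw$, and by Definition \ref{DefinitionNewBilinearDist} the integrand is exactly $w_\alpha w_\beta\, W_\vartheta f(w)$. Renaming $w$ as $z$ then gives (\ref{eq58}).

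The only step requiring a word of care — and it is a justification rather than a genuine obstacle — is the legitimacy of the change of variables, since $W_\sigma f$ is not a positive measure. Here I would invoke the standing hypothesis inherited from Theorem \ref{TheoremRobertsonSchrodinger}, namely that $(1+|z|^2)\, W_\sigma f$ is integrable; since $D^{(1,2)}$ is linear and invertible, this makes $\gamma_\alpha \gamma_\beta\, W_\sigma f$ absolutely integrable and licenses both the substitution and the implicit use of Fubini when reading off the entries of $\Upsilon$. Apart from that, the argument is purely a matter of tracking the constant $(\det\Omega)^{-1/2}$, so I anticipate no real difficulty.
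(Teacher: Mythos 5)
Your proposal is correct and is essentially the paper's own argument: the paper performs the identical change of variables $z\mapsto D^{(1,2)}z$ (merely starting from the right-hand side of (\ref{eq58}) rather than from (\ref{eq57})) and uses the same identity $\sqrt{\det\Omega}=\left|\det D^{(1,2)}\right|$. Your extra remark on absolute integrability under the hypothesis $\int(1+|z|^2)\,W_{\sigma}f(z)\,dz<\infty$ is a harmless refinement the paper leaves implicit.
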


\begin{proof}
The equivalence of the two expression in (\ref{eq57}) and (\ref{eq58}) is easily established, using the fact that $\sqrt{\det \Omega}= \left|\det D^{(1,2)} \right|$:
\begin{equation}
\begin{array}{c}
 \int_{\mathbb{R}^{2n}} z_{\alpha}z_{\beta}W_{\vartheta} f (z) dz = \\
 \\
 =\frac{1}{\sqrt{\det \Omega}}  \int_{\mathbb{R}^{2n}} z_{\alpha}z_{\beta}W_{\sigma} f \left(\left(D^{(1,2)} \right)^{-1} z \right)) dz = \\
 \\
 =\int_{\mathbb{R}^{2n}} \left(D^{(1,2)}z \right)_{\alpha} \left(D^{(1,2)}z \right)_{\beta}W_{\sigma} f (z) dz=\\
 \\
  =\int_{\mathbb{R}^{2n}} \gamma_{\alpha}\gamma_{\beta}W_{\sigma} f (z) dz.
\end{array}
\label{eq59}
\end{equation}
\end{proof}

All the other relevant quantities associated  with $\widehat{S^{(1)}}f$ and $\widehat{S^{(2)}}f$ can also be obtained from $W_{\vartheta} f$, as illustrated by the following theorem.
  
\begin{theorem}\label{TheoremMarginal}
Let $f \in \mathcal{S} (\mathbb{R}^n)$.
We have:
\begin{enumerate}
\item \begin{equation}
\int_{\mathbb{R}^n}\int_{\mathbb{R}^n} x^{\alpha} W_{\vartheta} f (x,\xi) d x d\xi = \int_{\mathbb{R}^n}  x^{\alpha} |(\widehat{S^{(1)}}f) (x)|^2 d x~, 
\label{eq59A}
\end{equation}
for all multi-indices $\alpha=(\alpha_1, \cdots, \alpha_n) \in \mathbb{N}_0^n$, with $|\alpha|= \alpha_1 + \cdots +\alpha_n >0$.

\item \begin{equation}
\int_{\mathbb{R}^n}\int_{\mathbb{R}^n} \xi^{\alpha} W_{\vartheta} f (x,\xi) dx d \xi = \int_{\mathbb{R}^n}  \xi^{\alpha} |(\widehat{S^{(2)}}f) (\xi)|^2 d \xi~, 
\label{eq59B}
\end{equation}
for all multi-indices $\alpha=(\alpha_1, \cdots, \alpha_n) \in \mathbb{N}_0^n$, with $|\alpha|= \alpha_1 + \cdots\alpha_n >0$.

\item \begin{equation}
\int_{\mathbb{R}^n}  W_{\vartheta} f (x,\xi) d \xi  =  |(\widehat{S^{(1)}}f) (x)|^2 ~, 
\label{eq59C}
\end{equation}
for all $ x\in \mathbb{R}^n$.

\item \begin{equation}
\int_{\mathbb{R}^n} W_{\vartheta} f (x,\xi) d x  =  |(\widehat{S^{(2)}}f)(\xi)|^2 ~, 
\label{eq59D}
\end{equation}
for all $ \xi \in \mathbb{R}^n$.

\end{enumerate}
\end{theorem}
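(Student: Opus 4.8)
The plan is as follows. First I would observe that items 1 and 2 are automatic consequences of items 3 and 4: since $f\in\mathcal{S}(\mathbb{R}^n)$ the cross-Wigner function $W_\sigma f$ lies in $\mathcal{S}(\mathbb{R}^{2n})$, and hence, by Definition \ref{DefinitionNewBilinearDist} together with the invertibility of $D^{(1,2)}$, so does $W_\vartheta f$. Consequently $x^\alpha W_\vartheta f$ and $\xi^\alpha W_\vartheta f$ are absolutely integrable on $\mathbb{R}^{2n}$, Fubini's theorem applies, and one recovers \eqref{eq59A} (resp.\ \eqref{eq59B}) simply by multiplying the marginal identity \eqref{eq59C} (resp.\ \eqref{eq59D}) by the monomial and integrating. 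So the whole content is the pair of marginal formulas \eqref{eq59C} and \eqref{eq59D}.

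To prove \eqref{eq59C}, I would start from $\int_{\mathbb{R}^n} W_\vartheta f(x,\xi)\,d\xi=\tfrac{1}{|\det D^{(1,2)}|}\int_{\mathbb{R}^n} W_\sigma f\bigl((D^{(1,2)})^{-1}(x,\xi)\bigr)\,d\xi$, using $\sqrt{\det\Omega}=|\det D^{(1,2)}|$. The key point is that, by \eqref{eq35} and \eqref{eqMatrices}, the top $n$ rows of $D^{(1,2)}$ are precisely the top $n$ rows of $S^{(1)}$; hence the matrix $M:=S^{(1)}(D^{(1,2)})^{-1}$, which satisfies $MD^{(1,2)}=S^{(1)}$, must have the block form $M=\left(\begin{smallmatrix} I_n & 0\\ M_1 & M_2\end{smallmatrix}\right)$, and since $\det S^{(1)}=1$ (as $S^{(1)}\in\operatorname*{Sp}(2n,\mathbb{R})$) one gets $|\det M_2|=|\det M|=1/|\det D^{(1,2)}|$. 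Writing $(D^{(1,2)})^{-1}=(S^{(1)})^{-1}M$ and $M(x,\xi)=(x,\,M_1x+M_2\xi)$, the substitution $\eta=M_1x+M_2\xi$ (for $x$ fixed) has Jacobian $|\det D^{(1,2)}|$, which cancels the prefactor and leaves $\int_{\mathbb{R}^n} W_\sigma f\bigl((S^{(1)})^{-1}(x,\eta)\bigr)\,d\eta$. By the symplectic covariance \eqref{eqReview19.1} this equals $\int_{\mathbb{R}^n} W_\sigma(\widehat{S^{(1)}}f)(x,\eta)\,d\eta$, and since $\widehat{S^{(1)}}f\in\mathcal{S}(\mathbb{R}^n)$ (so it and its Fourier transform lie in $L^1\cap L^2$) the marginal identity \eqref{eqIntro10} gives exactly $|(\widehat{S^{(1)}}f)(x)|^2$.

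The proof of \eqref{eq59D} is entirely parallel; the only change is that now the \emph{bottom} $n$ rows of $D^{(1,2)}$ coincide with the top $n$ rows of $S^{(2)}$, so the matrix $N:=S^{(2)}(D^{(1,2)})^{-1}$ has block form $N=\left(\begin{smallmatrix} 0 & I_n\\ N_1 & N_2\end{smallmatrix}\right)$ with $|\det N_1|=1/|\det D^{(1,2)}|$. One then integrates over $x$ with $\xi$ fixed, substitutes $y=N_1x+N_2\xi$, applies \eqref{eqReview19.1}, and finishes with the other marginal in \eqref{eqIntro10} applied to $\widehat{S^{(2)}}f$, obtaining $|(\widehat{S^{(2)}}f)(\xi)|^2$.

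I expect the main obstacle to be the (elementary but bookkeeping-heavy) linear algebra: one must verify carefully that $M$ and $N$ really have the claimed block shapes and that the determinants of the relevant blocks equal $1/\det D^{(1,2)}$ — this is exactly where $S^{(1)},S^{(2)}\in\operatorname*{Sp}(2n,\mathbb{R})$, hence $\det S^{(1)}=\det S^{(2)}=1$, enters — and one must track the Jacobian so that it precisely absorbs the normalization $1/\sqrt{\det\Omega}$. A minor additional point is that \eqref{eqIntro10} carries integrability hypotheses ($f,\widehat f\in L^1\cap L^2$), which is why working with $f\in\mathcal{S}(\mathbb{R}^n)$, so that $\widehat{S^{(1)}}f,\widehat{S^{(2)}}f$ stay in $\mathcal{S}(\mathbb{R}^n)$, is the convenient setting. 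An alternative, more conceptual route worth mentioning is to recognize the left-hand sides of \eqref{eq59C} and \eqref{eq59D} as symplectic Radon transforms of $W_\sigma f$ in the sense of \cite{Gosson1}, which are known to reproduce the squared moduli of the corresponding linear canonical transforms.
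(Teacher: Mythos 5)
Your proposal is correct, and it rests on exactly the same two structural facts as the paper's argument: that the top (resp.\ bottom) $n$ rows of $D^{(1,2)}$ coincide with the top $n$ rows of $S^{(1)}$ (resp.\ $S^{(2)}$), and the symplectic covariance (\ref{eqReview19.1}) of the Wigner distribution. The execution is organized differently, though. The paper first establishes the single symbol identity
\[
\int_{\mathbb{R}^{2n}} a(z)\,W_{\vartheta}f(z)\,dz=\int_{\mathbb{R}^{2n}} a\bigl(D^{(1,2)}S^{-1}z\bigr)\,W_{\sigma}(\widehat{S}f)(z)\,dz
\]
for an arbitrary symbol $a\in\mathcal{S}^{\prime}(\mathbb{R}^{2n})$ and arbitrary $S\in\operatorname*{Sp}(2n,\mathbb{R})$, and then obtains all four items by choosing $a$ to be a monomial (items 1--2) or a delta function concentrated on a coordinate slice (items 3--4), with $S=S^{(1)}$ or $S^{(2)}$; the row coincidence then makes $D^{(1,2)}S^{-1}$ act as the identity on the relevant block of coordinates. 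You instead factor $(D^{(1,2)})^{-1}=(S^{(j)})^{-1}M$ with $M$ block-triangular, compute the marginal by a fiberwise substitution whose Jacobian exactly cancels the normalization $1/\sqrt{\det\Omega}$, and deduce items 1--2 from 3--4 via Fubini. Your route has the small advantage of avoiding the pairing of $W_{\sigma}f$ with a delta-function symbol, which the paper handles only formally; the paper's route yields a single reusable identity that also underlies Remark \ref{RemarkRadon}. The linear algebra you flag as the main obstacle does check out: $M=S^{(1)}(D^{(1,2)})^{-1}$ is forced to have the block form $\left(\begin{smallmatrix}I_n&0\\ M_1&M_2\end{smallmatrix}\right)$ because its first block row is the unique solution of $(M_{11},M_{12})D^{(1,2)}=(A^{(1)},B^{(1)})$, and $|\det M_2|=|\det M|=1/|\det D^{(1,2)}|$ since $\det S^{(1)}=1$; similarly for $N$. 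One tiny correction: in your proof of (\ref{eq59D}), after the substitution $y=N_1x+N_2\xi$ the remaining integral is $\int W_{\sigma}(\widehat{S^{(2)}}f)(\xi,y)\,dy$, an integration over the \emph{second} argument, so you are invoking the \emph{same} (first) marginal identity of (\ref{eqIntro10}) as in item 3, not ``the other'' one; the stated conclusion $|(\widehat{S^{(2)}}f)(\xi)|^2$ is nevertheless the correct one.
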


\begin{proof}
Let $a(z)=a(x,\xi) \in \mathcal{S}^{\prime} (\mathbb{R}^{2n})$ be some symbol and let $S \in \operatorname*{Sp}(2n,\mathbb{R})$. We have:
\begin{equation}
\begin{array}{c}
\int_{\mathbb{R}^n} \int_{\mathbb{R}^n} a(x,\xi) W_{\vartheta} f(x,\xi) dx d \xi=\int_{\mathbb{R}^{2n}}  a(z) W_{\vartheta} f(z) d z =\\
\\
= \frac{1}{\sqrt{\det \Omega}} \int_{\mathbb{R}^{2n}}  a(\zeta) W_{\sigma} f\left((D^{(1,2)})^{-1} \zeta \right) d \zeta=\\
\\
=\int_{\mathbb{R}^{2n}}  a\left(D^{(1,2)} z_0 \right) W_{\sigma} f(z_0) d z_0 = \int_{\mathbb{R}^{2n}}  a\left(D^{(1,2)} S^{-1} z \right) W_{\sigma} f\left(S^{-1} z \right) d z=\\
\\
= \int_{\mathbb{R}^{2n}}  a\left(D^{(1,2)} S^{-1} z \right) W_{\sigma}( \widehat{S}f)(z) d z~.
\end{array}
\label{eqMarginal1}
\end{equation}
In the sequel we consider the following  projections $\Pi_j : \mathbb{R}^{2n} \to \mathbb{R}^n$, $j=1,2$:
\begin{equation}
\Pi_1 (z) = (z)_1= x ~,  \hspace{1 cm}  \Pi_2 (z) = (z)_2= \xi ~,
\label{eqMarginal1A}
\end{equation}
for any $z=(x, \xi) \in \mathbb{R}^{2n}$. 

\begin{enumerate}
\item In the identity (\ref{eqMarginal1}) set $a(z)=a(x,\xi) =(z)_1^{\alpha} =x^{\alpha}$ and $S=S^{(1)}$. We obtain:
\begin{equation}
\int_{\mathbb{R}^n} \int_{\mathbb{R}^n} x^{\alpha} W_{\vartheta} f(x,\xi) dx d \xi=\int_{\mathbb{R}^{2n}}  \left(D^{(1,2)} (S^{(1)})^{-1} z \right)_1^{\alpha} W_{\sigma} (\widehat{S^{(1)}}f) (z) d z~.
\label{eqMarginal2}
\end{equation}
Notice that (cf.(\ref{eq35})):
\begin{equation}
\left(D^{(1,2)} (S^{(1)})^{-1} z \right)_1^{\alpha}=\left(S^{(1)} (S^{(1)})^{-1} z \right)_1^{\alpha}= \left(z \right)_1^{\alpha}= x^{\alpha}~,
\label{eqMarginal3}
\end{equation}
for $z= (x,\xi) \in \mathbb{R}^{2n}$.

Thus:
\begin{equation}
\begin{array}{c}
\int_{\mathbb{R}^n} \int_{\mathbb{R}^n} x^{\alpha} W_{\vartheta} f(x,\xi) dx d \xi =\int_{\mathbb{R}^{2n}}  \left(z \right)_1^{\alpha} W_{\sigma}(\widehat{S^{(1)}}f) (z) d z=\\
\\
=\int_{\mathbb{R}^{n}} \int_{\mathbb{R}^{n}}  x^{\alpha} W_{\sigma} (\widehat{S^{(1)}}f)(x,\xi) dx d \xi= \int_{\mathbb{R}^{n}}  x^{\alpha} |(\widehat{S^{(1)}}f) (x)|^2 d x~.
\end{array}
\label{eqMarginal4}
\end{equation}

\item The result is obtained similarly for $a(\zeta)=a(x,\xi) =(\zeta)_2^{\alpha} =\xi^{\alpha}$ and $S=S^{(2)}$. 

\item In the identity (\ref{eqMarginal1}) set $a(z^{\prime})=a(x^{\prime}, \xi^{\prime}) =\delta \left( (z- z^{\prime})_1\right) =\delta(x-x^{\prime})$, where $z=(x,\xi)$, and let $S=S^{(1)}$. We obtain:
\begin{equation}
\begin{array}{c}
\int_{\mathbb{R}^{n}}W_{\vartheta} f (x,\xi) d \xi= \int_{\mathbb{R}^{n}}\int_{\mathbb{R}^{n}}\delta(x-x^{\prime}) W_{\vartheta} f (x^{\prime}, \xi^{\prime}) dx^{\prime} d \xi^{\prime}=\\
\\
= \int_{\mathbb{R}^{2n}} \delta \left( (z- z^{\prime})_1\right) W_{\vartheta} f (z^{\prime}) d z^{\prime} = \\
\\
=\int_{\mathbb{R}^{2n}} \delta \left( (z- D^{(1,2)} (S^{(1)})^{-1} z^{\prime})_1\right) W_{\sigma}(\widehat{S^{(1)}}f) (z^{\prime}) d z^{\prime}~.
\end{array}
\label{eqMarginal5}
\end{equation}
Again, we have:
\begin{equation}
\begin{array}{c}
(z- D^{(1,2)} (S^{(1)})^{-1} z^{\prime})_1 =(z)_1- (D^{(1,2)} (S^{(1)})^{-1} z^{\prime})_1=\\
\\
=(z)_1- (S^{(1)} (S^{(1)})^{-1} z^{\prime})_1=(z)_1- (z^{\prime})_1=(z- z^{\prime})_1
\end{array}
\label{eqMarginal6}
\end{equation}
Thus:
\begin{equation}
\begin{array}{c}
\int_{\mathbb{R}^{n}}W_{\vartheta} f (x,\xi) d \xi=\int_{\mathbb{R}^{2n}} \delta \left( (z-z^{\prime})_1\right) W_{\sigma} (\widehat{S^{(1)}}f) (z^{\prime}) d z^{\prime}=\\
\\
=\int_{\mathbb{R}^{n}}\int_{\mathbb{R}^{n}} \delta (x-x^{\prime}) W_{\sigma}(\widehat{S^{(1)}}f) (x^{\prime}, \xi^{\prime}) d x^{\prime} d \xi^{\prime}=\\
\\
= \int_{\mathbb{R}^{n}} W_{\sigma} (\widehat{S^{(1)}}f) (x,\xi)  d \xi= |(\widehat{S^{(1)}}f)(x)|^2 ~,
\end{array}
\label{eqMarginal7}
\end{equation}
for all $x\in \mathbb{R}^n$.

\item The result is obtained following the same procedure with $a(z^{\prime})=a(x^{\prime}, \xi^{\prime}) =\delta \left( (z- z^{\prime})_2\right) =\delta(\xi-\xi^{\prime})$, where $z=(x,\xi)$, and $S=S^{(2)}$.
\end{enumerate}
\end{proof}

\begin{remark}\label{RemarkMoyalMarginal}
The previous results are not at all obvious, and they are definitely not immediate consequences of the definition (\ref{eq56}) above of the bilinear distribution. Indeed, in some of our previous work (see e.g. refs. \cite{Dias1}, \cite{Dias2}) for specific symplectic forms $\vartheta$ (associated with so-called noncommutative quantum mechanics), we obtain marginal distributions of the form (in dimension $n=2$):
\begin{equation}
\int_{\mathbb{R}^2} W_{\vartheta} f(x, \xi) d\xi = f(x) \star_M \overline{f (x)}~,
\label{eq18}
\end{equation}
where $\star_M$ is a Moyal product:
\begin{equation}
\begin{array}{c}
f(x_1,x_2) \star_M g(x_1,x_2)= f(x_1,x_2) \cdot  g(x_1,x_2)+\\
\\
+ \frac{i \theta}{2}\left( \frac{\partial f}{\partial x_1} \frac{\partial g}{\partial x_2} - \frac{\partial f}{\partial x_2} \frac{\partial g}{\partial x_1} \right) +  \mathcal{O}(\theta^2)~,
\end{array}
\label{eq19}
\end{equation}
where $\theta >0$ is some physical parameter. Thus, the marginal distribution (\ref{eq18}) can become negative in general.  
\end{remark}

\begin{remark}\label{RemarkRadon}
The marginal distributions appearing the previous theorem can be interpreted in the following fashion. We have from (\ref{eqMarginal7}):
\begin{equation}
\begin{array}{c}
\int_{\mathbb{R}^{n}}W_{\vartheta} f (x,\xi) d \xi=\int_{\mathbb{R}^{2n}} \delta \left( (z-z^{\prime})_1\right) W_{\sigma}(\widehat{S^{(1)}}f) (z^{\prime}) d z^{\prime}
=\\
\\
=\int_{\mathbb{R}^{2n}} \delta \left( (z-z^{\prime})_1\right) W_{\sigma} f \left((S^{(1)})^{-1} z^{\prime}\right) d z^{\prime} =\\
\\
=\int_{\mathbb{R}^{2n}} \delta \left( (z-S^{(1)}z^{\prime})_1\right) W_{\sigma} f (z^{\prime}) d z^{\prime}=\\
\\
=\int_{\mathbb{R}^{n}}\int_{\mathbb{R}^{n}} \delta \left(x - A^{(1)} x^{\prime}-B^{(1)}\xi^{\prime} \right) W_{\sigma} f (x^{\prime}, \xi^{\prime}) d x^{\prime} d \xi^{\prime}~. 
\end{array}
\label{eqRadon1}
\end{equation}
The expression on the right-hand side amounts to the symplectic Radon transform of the Wigner function $W_{\sigma}f$ on the Lagrangian plane $\xi = A^{(1)} x-B^{(1)}\xi$ \cite{Gosson1}, and also to the probability density for the observable $A^{(1)} \widehat{X} -B^{(1)} \widehat{P}$. 

Radon transforms of Wigner functions are called quantum tomograms and play a fundamental r\^ole in the tomographic picture of quantum mechanics \cite{Ibort}.
 
\end{remark}

\begin{remark}\label{RemarkHudson}
Another interesting result that follows immediately from this analysis regards the "saturation" of the uncertainty principle (\ref{eq39}). It is well known that the Robertson-Schr\"odinger uncertainty principle (\ref{eq41}) is saturated, when we have $n$ directions of minimal uncertainty \cite{Birk,Gosson}. In other words all the symplectic invariants of the Covariance matrix $\Sigma$ (see (\ref{eq38})) are equal to $\frac{1}{4 \pi}$. The only solution are the Gaussian Wigner functions of the form:
\begin{equation}
W_{\sigma} f (z)= G_{M}(z)= 2^n e^{- 2 \pi (z-z_0) \cdot M (z-z_0)},
\label{eq60}
\end{equation}
where $z_0 \in \mathbb{R}^{2n}$ and $M$ is a real, symmetric, positive-definite, symplectic $2n \times 2n $ matrix.

Likewise the new uncertainty principle (\ref{eq39}) is saturated if and only if
\begin{equation}
\begin{array}{c}
W_{\vartheta} f (z)= \frac{1}{\sqrt{\det \Omega}} G_{M} \left( \left(D^{(1,2)} \right)^{-1} z\right) = \\
\\
=\frac{2^n}{\sqrt{\det \Omega}} e^{- 2 \pi (z-z_0) \cdot \left(\left(D^{(1,2)} \right)^{-1}\right)^T M \left(D^{(1,2)} \right)^{-1} (z-z_0)}.
\end{array}
\label{eq60}
\end{equation}

The minimal uncertainties and the directions of minimal uncertainty can be obtained via the so-called $\vartheta$-\textit{Williamson invariants}. For more details see \cite{Dias5}.  

One could also state a counterpart of Hudson's theorem for the Weyl quantization on $(\mathbb{R}^{2n}; \vartheta)$ \cite{Hudson}. A function $W_{\vartheta}f (z)$ is non-negative for all $z \in \mathbb{R}^{2n}$, if and only if it is of the form (\ref{eq60}), in which case $f$ is a generalized Gaussian. Thus, we obtain the same class of functions as in the standard case.
\end{remark}

We conclude by showing that the bilinear distribution (\ref{eq56}) cannot belong to Cohen's class \cite{Cohen}, but it can be, in certain cases, a linear perturbation of the Wigner distribution as defined in \cite{Cordero}.

A bilinear distribution $Q(f)$ is said to belong to \textit{Cohen's class} if there exists a kernel $\Phi \in \mathcal{S}^{\prime} (\mathbb{R}^{2n})$, such that:
\begin{equation}
Q(f) (z)=Q_{\Phi}(f) (z)= \left(\Phi \star W_{\sigma}f \right)(z) = \int_{\mathbb{R}^{2n}} \Phi (z-z^{\prime}) W_{\sigma} f (z^{\prime}) d z^{\prime}
\label{eqCohenclass1}
\end{equation}
for all $f \in \mathcal{S}(\mathbb{R}^n)$.

\begin{theorem}\label{TheoremCohenClass}
The bilinear distribution $W_{\vartheta} f$ (\ref{eq56}) belongs to Cohen's class if and only if $\vartheta= \sigma$. 
\end{theorem}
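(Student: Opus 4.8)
The plan is to detect membership in Cohen's class through the behaviour of a bilinear distribution under phase-space translations, and to read off the condition $\vartheta=\sigma$ from that behaviour. First I would record that every $Q_{\Phi}$ inherits the translation covariance of $W_{\sigma}$: since $W_{\sigma}(\widehat{T}^{\sigma}(z_{0})f)=W_{\sigma}f(\cdot-z_{0})$ by (\ref{eqReview25A}) and convolution commutes with translations, one gets $Q_{\Phi}(\widehat{T}^{\sigma}(z_{0})f)(z)=Q_{\Phi}(f)(z-z_{0})$ for all $z_{0}\in\mathbb{R}^{2n}$ and all $f\in\mathcal{S}(\mathbb{R}^{n})$. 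On the other hand, writing $D=D^{(1,2)}$, Definition \ref{DefinitionNewBilinearDist} together with (\ref{eqReview25A}) yields
\[
W_{\vartheta}(\widehat{T}^{\sigma}(z_{0})f)(z)=\tfrac{1}{\sqrt{\det\Omega}}\,W_{\sigma}(\widehat{T}^{\sigma}(z_{0})f)(D^{-1}z)=\tfrac{1}{\sqrt{\det\Omega}}\,W_{\sigma}f(D^{-1}z-z_{0})=W_{\vartheta}f(z-Dz_{0}),
\]
so $W_{\vartheta}$ intertwines $\widehat{T}^{\sigma}(z_{0})$ with the translation by $Dz_{0}$ instead of by $z_{0}$.

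If now $W_{\vartheta}=Q_{\Phi}$ for some kernel $\Phi\in\mathcal{S}^{\prime}(\mathbb{R}^{2n})$, the two covariance identities force $W_{\vartheta}f(z-z_{0})=W_{\vartheta}f(z-Dz_{0})$ for all $f,z,z_{0}$; equivalently, for each fixed $f$ the function $W_{\vartheta}f$ is invariant under translation by every vector of the form $(D-I)z_{0}$. Assuming $D\neq I$, I pick $z_{0}$ with $v:=(D-I)z_{0}\neq 0$, so that $W_{\vartheta}f$ is $v$-periodic. But for $f\neq 0$ the function $W_{\vartheta}f$ is a nonzero element of $L^{2}(\mathbb{R}^{2n})$ — Moyal's identity gives $\|W_{\sigma}f\|_{2}=\|f\|_{2}^{2}$, and $W_{\vartheta}f$ is a nonzero constant multiple of $W_{\sigma}f$ composed with the invertible linear map $D^{-1}$ (here I use the standing hypothesis $\det\Omega\neq 0$ under which $W_{\vartheta}$ is defined) — and a nonzero $L^{2}$ function on $\mathbb{R}^{2n}$ cannot be periodic. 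This contradiction forces $D=D^{(1,2)}=I$, whence $\Omega=DJD^{T}=J$ by (\ref{eq40}), i.e. $\vartheta=\sigma$. Conversely, when $\vartheta=\sigma$ one has $\Omega=J$, $\det\Omega=1$ and $D^{(1,2)}=I$, so $W_{\vartheta}f=W_{\sigma}f=Q_{\delta}(f)$ belongs trivially to Cohen's class.

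The substantive half is the forward implication, and the hard part is to exclude every way in which the composition with $D^{-1}$ built into $W_{\vartheta}$ could still be rewritten as a convolution against a fixed kernel; the translation-covariance/periodicity argument above does this in one stroke. An equivalent route is to take the symplectic Fourier transform: using that $\mathcal{F}_{\sigma}$ turns convolution into a product and that $\mathcal{F}_{\sigma}W_{\sigma}f$ is the radar ambiguity function of $f$, the Cohen condition becomes $(\mathcal{F}_{\sigma}W_{\sigma}f)(N\zeta)=(\mathcal{F}_{\sigma}\Phi)(\zeta)\,(\mathcal{F}_{\sigma}W_{\sigma}f)(\zeta)$ for all $f,\zeta$, where $N$ is a fixed linear map depending only on $D$; testing on Gaussians, whose ambiguity functions are nowhere-vanishing Gaussians with $f$-dependent covariance, forces $N^{T}MN=M$ for all admissible symmetric $M$, hence $N=\pm I$, and one non-even test function rules out $N=-I$, giving again $D=I$. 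A secondary point to track is the non-uniqueness/normalization of the Darboux matrix: the argument pins down exactly when the specific $D^{(1,2)}$ of (\ref{eq35}) equals the identity, which is precisely the regime $\vartheta=\sigma$ in which $W_{\vartheta}$ collapses onto $W_{\sigma}$.
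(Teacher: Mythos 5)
Your proof is correct and follows the same route as the paper's: both detect failure of Cohen-class membership by comparing the universal covariance $Q_{\Phi}(\widehat{T}^{\sigma}(z_0)f)=Q_{\Phi}(f)(\cdot-z_0)$ with the computed behaviour $W_{\vartheta}(\widehat{T}^{\sigma}(z_0)f)=W_{\vartheta}f(\cdot-D z_0)$, concluding $D^{(1,2)}=I_{2n}$. The only difference is that the paper merely asserts these two covariances are incompatible for $D\neq I_{2n}$, whereas you justify it via the periodicity argument (a nonzero $L^{2}(\mathbb{R}^{2n})$ function, which $W_{\vartheta}f$ is by Moyal's identity, cannot be invariant under translation by a nonzero vector in the range of $D-I$), a worthwhile completion of the step the paper leaves implicit.
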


\begin{proof}
From the phase-space covariance property (\ref{eqReview25A}) of Wigner functions it follows immediately that:
\begin{equation}
Q_{\Phi}\left(\widehat{T}^{\sigma} (z_0) f\right) (z)=Q_{\Phi}(f) (z-z_0)~,
\label{eqCohenclass2}
\end{equation}
for any $z_0 \in \mathbb{R}^{2n}$. 

Since the Wigner distribution $W_{\vartheta} f$ satisfies (\ref{eqReview25B}) instead of (\ref{eqReview25A}), we conclude that it cannot be in Cohen's class. Indeed, we have:
\begin{equation}
\begin{array}{c}
W_{\vartheta} \left(\widehat{T}^{\sigma} (z_0) f\right) (z)=\frac{1}{\sqrt{\det \Omega}} W_{\sigma} \left(\widehat{T}^{\sigma} (z_0) f\right) \left(D^{-1} z\right) =\\
\\
=\frac{1}{\sqrt{\det \Omega}} W_{\sigma}f \left(D^{-1} z-z_0\right)=\frac{1}{\sqrt{\det \Omega}} W_{\sigma}f \left(D^{-1} \left(z-Dz_0\right)\right)=\\
\\
=W_{\vartheta}f \left(z-Dz_0\right)~.
\end{array}
\label{eqCohenclass3}
\end{equation}
For $D \neq I_{2n}$ this is different from (\ref{eqCohenclass2}) whenever $z_0 \neq 0$.

Thus, there cannot exist any $\Phi \in \mathcal{S}^{\prime} (\mathbb{R}^{2n})$, such that:
\begin{equation}
W_{\vartheta}f (z)= \left(\Phi \star W_{\sigma} f \right) (z)~. 
\label{eqCohenclass4}
\end{equation}
\end{proof}
\begin{remark}\label{RemarkNewCohenClass}
The previous result suggest that we may define a new generalized Cohen class associated with an arbitrary symplectic form $\vartheta$. Given some $\Phi \in \mathcal{S}^{\prime}  (\mathbb{R}^{2n})$, we define:
\begin{equation}
Q_{\Phi,\vartheta}(f)(z)= \left(\Phi \star W_{\vartheta} f\right) (z) = \int_{\mathbb{R}^{2n}} \Phi (z-z^{\prime}) W_{\vartheta} f (z^{\prime}) d z^{\prime} ~,
\label{eqNewCohenclass1}
\end{equation}
for all $f \in \mathcal{S}(\mathbb{R}^n)$. For $\vartheta= \sigma$, we recover the usual Cohen class. It is worth mentioning that, under these circumstances (cf.(\ref{eqReview25B})):
\begin{equation}
\begin{array}{c}
Q_{\Phi,\vartheta}(\widehat{T}^{\vartheta} (z_0)f)(z)=\int_{\mathbb{R}^{2n}} \Phi (z-z^{\prime}) W_{\vartheta} (\widehat{T}^{\vartheta} (z_0)f) (z^{\prime}) d z^{\prime}=\\
\\
=\int_{\mathbb{R}^{2n}} \Phi (z-z^{\prime}) W_{\vartheta} f (z^{\prime}-z_0) d z^{\prime}=\\
\\
=\int_{\mathbb{R}^{2n}} \Phi (z-z_0-z^{\prime \prime}) W_{\vartheta} f (z^{\prime \prime}) d z^{\prime \prime}= Q_{\Phi,\vartheta}(f)(z-z_0)~.
\end{array}
\label{eqNewCohenclass2}
\end{equation}
\end{remark}

On the contrary, in some cases $W_{\vartheta} f$ is a linear perturbation of the Wigner distribution according to the definition of \cite{Cordero}. Let us recall this definition. Consider matrix $A \in \operatorname*{Gl}(2n,\mathbb{R})$,
\begin{equation}
A= \left(
\begin{array}{c c}
A_{11} & A_{12}\\
A_{21} & A_{22}
\end{array}
\right)~,
\label{eqLinearPerturbation1}
\end{equation}
where $A_{jk}$ $(j,k=1,2)$ are real $n \times n$ matrices.

A \textit{linear perturbation of the Wigner distribution} is a bilinear time-frequency representation of the form;
\begin{equation}
\mathcal{B}_A (f,g) (x, \xi)=\int_{\mathbb{R}^n} f\left(A_{11}x+A_{12} y \right) \overline{g\left(A_{21}x+A_{22} y \right) } e^{-2 i \pi \xi \cdot y} dy~,
\label{eqLinearPerturbation2}
\end{equation}
for $f,g, \in L^2 (\mathbb{R}^n)$.

As usual, we shall write $\mathcal{B}_A f=\mathcal{B}_A (f,f)$.

These bilinear distributions belong to Cohen's class if and only if the matrix $A$ is of the form \cite{Cordero}
\begin{equation}
A=A_M = \left(
\begin{array}{c c}
I_n & M + (1/2) I_n\\
I_n & M - (1/2) I_n
\end{array}
\right)~, 
\label{eqLinearPerturbation3}
\end{equation}
where $M \in \mathbb{R}^{n \times n}$. In particular, we obtain the Wigner distribution if $M= 0$.

Let us now investigate under what circumstances $W_{\vartheta}f$ can be a linear perturbation of this form.

\begin{theorem}\label{TheoremLinearPerturbation}
The distribution $W_{\vartheta}f$ (\ref{eq56}) is a linear perturbation of the Wigner function of the form (\ref{eqLinearPerturbation2}) if and only if the symplectic matrices $S^{(1)},~S^{(2)}$ are of the form:
\begin{equation}
S^{(1)}= \left(
\begin{array}{c c}
A_{11}^{-1} & 0\\
&\\
C^{(1)} & A_{11}^T
\end{array}
\right)~, \hspace{1 cm}  S^{(2)}= \left(
\begin{array}{c c}
0 & -2 A_{22}^T\\
&\\
\frac{1}{2} A_{22}^{-1} & D^{(2)}
\end{array}
\right)~,
\label{eqLinearPerturbation3A}
\end{equation}
where $A_{11}, A_{22} \in \operatorname*{Gl}(n,\mathbb{R})$ and the matrices $C^{(1)},~D^{(2)} \in \mathbb{R}^{n \times n}$ are such that $\left(A_{11}^{-1}\right)^T C^{(1)}$ and $A_{22} D^{(2)} $ are symmetric.

In that case the linear perturbation is given by: 
\begin{equation}
A=\left(
\begin{array}{c c}
A_{11} & -A_{22}\\
A_{11} & A_{22}
\end{array}
\right)~.
\label{eqLinearPerturbation6}
\end{equation}

\end{theorem}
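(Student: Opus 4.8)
The plan is to compute $W_{\vartheta}f$ directly from Definition \ref{DefinitionNewBilinearDist} by a change of variables in the integral defining $W_{\sigma}f$, and then to match the outcome against the template \eqref{eqLinearPerturbation2}. Write $D:=D^{(1,2)}$ and decompose its inverse into $n\times n$ blocks,
\[
D^{-1}=\begin{pmatrix} P & Q\\ R & S\end{pmatrix}.
\]
Inserting $W_{\sigma}f(u,v)=\int_{\mathbb{R}^n}f(u+\tfrac{t}{2})\overline{f(u-\tfrac{t}{2})}e^{-2i\pi v\cdot t}\,dt$ with $(u,v)=(Px+Q\xi,\;Rx+S\xi)$ into the definition of $W_{\vartheta}f$ gives
\[
W_{\vartheta}f(x,\xi)=\frac{1}{|\det D|}\int_{\mathbb{R}^n}f\!\left(Px+Q\xi+\tfrac{t}{2}\right)\overline{f\!\left(Px+Q\xi-\tfrac{t}{2}\right)}\,e^{-2i\pi(Rx+S\xi)\cdot t}\,dt .
\]

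To decide when this has the form \eqref{eqLinearPerturbation2}, observe that in $\mathcal{B}_A f$ the variable $\xi$ enters only through the Fourier kernel $e^{-2i\pi\xi\cdot y}$, whereas the arguments of $f$ and $\overline{f}$ are affine in $(x,y)$ with no $\xi$. In the expression above $\xi$ enters the arguments of $f$ through $Q\xi$, so matching forces $Q=0$; then, after the substitution $t=\Lambda y$, the exponent becomes $e^{-2i\pi(\Lambda^{T}Rx+\Lambda^{T}S\xi)\cdot y}$, and matching it with $e^{-2i\pi\xi\cdot y}$ forces $\Lambda^{T}S=I_n$ (hence $S$ invertible and $\Lambda=S^{-T}$) together with $\Lambda^{T}R=0$, i.e. $R=0$. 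Thus $D^{-1}=\text{diag}(P,S)$ is block diagonal, whence $D=\text{diag}(P^{-1},S^{-1})$, which by \eqref{eqMatrices} is exactly the statement $B^{(1)}=0$ and $A^{(2)}=0$. Conversely, as soon as $B^{(1)}=0$ and $A^{(2)}=0$ the same change of variables runs unobstructed and delivers a distribution of the form \eqref{eqLinearPerturbation2}.

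It remains to translate ``$B^{(1)}=0$, $A^{(2)}=0$, and $S^{(1)},S^{(2)}\in\operatorname*{Sp}(2n,\mathbb{R})$'' into the stated block forms. Feeding $B^{(1)}=0$ into the symplectic relations \eqref{eqSymp3.2} for $S^{(1)}$ gives $(A^{(1)})^{T}D^{(1)}=I_n$, i.e. $D^{(1)}=(A^{(1)})^{-T}$, together with $(A^{(1)})^{T}C^{(1)}$ symmetric; writing $A^{(1)}=A_{11}^{-1}$ yields the stated $S^{(1)}$ with $(A_{11}^{-1})^{T}C^{(1)}$ symmetric. Likewise $A^{(2)}=0$ in the relations for $S^{(2)}$ forces $-(C^{(2)})^{T}B^{(2)}=I_n$, i.e. $C^{(2)}=-(B^{(2)})^{-T}$, with $(B^{(2)})^{T}D^{(2)}$ symmetric; parametrising $B^{(2)}=-2A_{22}^{T}$ (the factor $-2$ chosen so the perturbation matrix comes out symmetrically) gives $C^{(2)}=\tfrac12 A_{22}^{-1}$ and the stated $S^{(2)}$ with $A_{22}D^{(2)}$ symmetric. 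Finally, with $D=\text{diag}(A_{11}^{-1},-2A_{22}^{T})$ one has $P=A_{11}$ and $S^{-T}=-2A_{22}$, so $t=-2A_{22}y$ in the displayed integral produces
\[
W_{\vartheta}f(x,\xi)=|\det A_{11}|\int_{\mathbb{R}^n}f(A_{11}x-A_{22}y)\,\overline{f(A_{11}x+A_{22}y)}\,e^{-2i\pi\xi\cdot y}\,dy ,
\]
which is $\mathcal{B}_A f$ with $A=\bigl(\begin{smallmatrix}A_{11}&-A_{22}\\ A_{11}&A_{22}\end{smallmatrix}\bigr)$, as in \eqref{eqLinearPerturbation6}.

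The step I expect to be the main obstacle is making the necessity rigorous: one must argue that if $W_{\vartheta}f=\mathcal{B}_{A'}f$ for \emph{all} $f$ then the underlying affine substitutions coincide, and in particular $Q=R=0$. The cleanest route is to test on Gaussians $f(x)=e^{-\pi x\cdot x}$, for which both sides are explicit Gaussians in $(x,\xi)$ whose quadratic forms depend linearly on $D$ (respectively on $A'$); equating these quadratic forms yields a matrix identity forcing the block structure. A secondary, purely bookkeeping issue is the scalar $|\det A_{11}|$ above: the Jacobian $|\det(-2A_{22})|$ of the final substitution and the normalisation $\sqrt{\det\Omega}=|\det D|$ combine into exactly this factor, so strictly $W_{\vartheta}f=|\det A_{11}|\,\mathcal{B}_A f$; it equals $1$ in the normalising cases (e.g. $S^{(1)}=I_{2n}$, $S^{(2)}=J$, recovering the Wigner distribution) and is otherwise absorbed into the statement's convention for \eqref{eqLinearPerturbation2}.
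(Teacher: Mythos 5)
Your computation is correct and, in substance, runs the paper's argument in the opposite direction. The paper starts from $\mathcal{B}_A f$: it first invokes Proposition 3.3 of \cite{Cordero} (reality of $W_{\vartheta}f$ forces $A_{21}=A_{11}$, $A_{12}=-A_{22}$), and then shows by the substitution $y=-\tfrac12 A_{22}^{-1}\tau$ that $\mathcal{B}_Af$ is a rescaled dilation of $W_{\sigma}f$, so that $W_{\vartheta}f=N\mathcal{B}_Af$ reduces to equality of two dilation matrices. You instead expand $W_{\vartheta}f$ as an integral and match it against the template (\ref{eqLinearPerturbation2}). Your sufficiency direction, the bookkeeping of the factor $|\det A_{11}|$ (the paper's constant $N$), and the translation of $B^{(1)}=0$, $A^{(2)}=0$ into the block forms (\ref{eqLinearPerturbation3A}) via (\ref{eqSymp3.2}) all agree with the paper and are correct. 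What you gain by skipping the reality criterion is self-containedness; what you lose is that the necessity direction becomes harder, since for a general $A'\in\operatorname*{Gl}(2n,\mathbb{R})$ you must rule out that two genuinely different affine substitutions produce the same integral for every $f$ --- matching integrands, as you acknowledge, is not yet an argument.

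On that necessity step, the fix you propose is not sufficient as stated: testing on the single Gaussian $f(x)=e^{-\pi|x|^2}$ gives $W_{\sigma}f(z)=2^ne^{-2\pi|z|^2}$, which is radially symmetric in phase space, so the resulting identity of quadratic forms only determines $(D^{-1})^TD^{-1}$ and is blind to replacing $D$ by $UD$ with $U$ orthogonal; it therefore cannot by itself force $Q=R=0$. You need either a richer family --- anisotropic and/or translated Gaussians $e^{-\pi(x-a)\cdot T(x-a)}$, translations being convenient because of the covariance (\ref{eqReview25B}) --- or the paper's shortcut: use reality of $W_{\vartheta}f$ to reduce $A'$ to the form (\ref{eqLinearPerturbation6}) first, after which both sides are rescaled dilations of $W_{\sigma}f$ and the comparison of dilation matrices is immediate. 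With either repair your argument closes.
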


\begin{proof}
From Proposition 3.3 of \cite{Cordero} the distribution $\mathcal{B}_A f$ is a real function if and only if:
\begin{equation}
A_{11}= A_{21}~,  \hspace{1 cm} A_{12}=-A_{22}~.
\label{eqLinearPerturbation4}
\end{equation}
Since $W_{\vartheta} f$ is a real function, then it can only be a linear perturbation, 
\begin{equation}
W_{\vartheta} f (x,\xi)=N \mathcal{B}_A f(x,\xi)~, 
\label{eqLinearPerturbation5}
\end{equation}
if the matrix $A$ is of the form (\ref{eqLinearPerturbation6}).

For $A$ to be nonsingular, then both $A_{11}$ and $A_{22}$ must be nonsingular.\footnote{In the terminology of \cite{Cordero}, the matrix $A$ is both a \textit{left- and a right-regular matrix}.} The constant $N \in \mathbb{R}$ was introduced in (\ref{eqLinearPerturbation5}) just to ensure that $W_{\vartheta}f$ is normalized.

Plugging (\ref{eqLinearPerturbation6}) in (\ref{eqLinearPerturbation2}) and performing the substitution $y=- 1/2 A_{22}^{-1} \tau$ yields: 
\begin{equation}
\begin{array}{c}
\mathcal{B}_A f (x, \xi)=\int_{\mathbb{R}^n} f\left(A_{11}x-A_{22} y \right) \overline{f\left(A_{11}x+A_{22} y \right) } e^{-2 i \pi \xi \cdot y} dy=\\
\\
=\frac{1}{2^n | \det A_{22}|} \int_{\mathbb{R}^n} f\left(A_{11}x + \frac{\tau }{2} \right) \overline{f\left(A_{11}x-\frac{\tau }{2}\right) } e^{-2 i \pi \left(-\frac{1}{2}(A_{22}^{-1})^T \xi \right) \cdot \tau} d \tau=\\
\\
= \frac{1}{2^n | \det A_{22}|} W_{\sigma} f \left(A_{11} x, -\frac{1}{2}(A_{22}^{-1})^T \xi  \right)~. 
\end{array}
\label{eqLinearPerturbation7}
\end{equation}
Thus (\ref{eqLinearPerturbation5}) can hold if and only if (cf.(\ref{eq56})):
\begin{equation}
\left(D^{(1,2)}\right)^{-1}= \left(
\begin{array}{c c}
A_{11} & 0\\
0 & -\frac{1}{2}(A_{22}^{-1})^T
\end{array}
\right) \Leftrightarrow D^{(1,2)}= \left(
\begin{array}{c c}
A_{11}^{-1} & 0\\
0 & - 2A_{22}^T
\end{array}
\right)~.
\label{eqLinearPerturbation8}
\end{equation}
By comparison with (\ref{eq35}), we conclude that:
\begin{equation}
\begin{array}{l c l}
A^{(1)}= A_{11}^{-1} & \hspace{0.5 cm}& B^{(1)}= 0\\
& & \\
A^{(2)}= 0 & \hspace{0.5 cm}& B^{(2)}=- 2A_{22}^T
\end{array}
\label{eqLinearPerturbation9}
\end{equation}
Consequently:
\begin{equation}
S^{(1)}=\left(
\begin{array}{c c}
A_{11}^{-1} & 0\\
&\\
C^{(1)} & D^{(1)}
\end{array}
\right)~, \hspace{1 cm} S^{(2)}=\left(
\begin{array}{c c}
0 & - 2A_{22}^T\\
&\\
C^{(2)} & D^{(2)}
\end{array}
\right)~.
\label{eqLinearPerturbation10}
\end{equation}
From (\ref{eqSymp3.1},\ref{eqSymp3.2}) $S^{(1)}$ is a symplectic matrices if and only if:
\begin{equation}
\begin{array}{l}
\left(A_{11}^{-1}\right)^T C^{(1)} ~\text{ is symmetric ,}~\\
\\
\left(A_{11}^{-1}\right)^T D^{(1)}= I_n~.
\end{array}
\label{eqLinearPerturbation11}
\end{equation}
Likewise, $S^{(2)}$ is a symplectic matrices if and only if:
\begin{equation}
\begin{array}{l}
A_{22} D^{(2)}~\text{ is symmetric ,}~\\
\\
2 A_{22} C^{(2)}=  I_n~,
\end{array}
\label{eqLinearPerturbation12}
\end{equation}
which concludes the proof.
\end{proof}

\begin{remark}
Notice that the matrix $S^{(1)}$ in (\ref{eqLinearPerturbation3A}) can be written as the product 
\begin{equation}
S^{(1)} =V_P M_L~,
\label{eqLinearPerturbation13}
\end{equation}
of the two symplectic matrices:
\begin{equation}
V_P= \left(
\begin{array}{c c}
I_n & 0\\
&\\
-P & I_n
\end{array}
\right)~, \hspace{1 cm} M_L=\left(
\begin{array}{c c}
L^{-1} & 0\\
&\\
0 & L^T
\end{array}
\right)~,
\label{eqLinearPerturbation14}
\end{equation}
where $P=-C^{(1)} A_{11}$ is symmetric and $L=A_{11}$ is nonsingular. Matrices of the form $V_P$ are called "symplectic shears" and those of the form $M_L$ are known as a "symplectic squeezing" \cite{Gosson}.

In the same vein, the matrix $S^{(2)}$ in (\ref{eqLinearPerturbation3A}) can be written as the product 
\begin{equation}
S^{(2)} =V_P M_L J~,
\label{eqLinearPerturbation13}
\end{equation}
where $J$ is the standard symplectic matrix, and $V_P,~M_L$ are given by (\ref{eqLinearPerturbation14}), where this time $L= - \frac{1}{2} \left(A_{22}^T\right)^{-1}$ and $P=2 D^{(2)} A_{22}^T$.
\end{remark}

\begin{remark}

The linear perturbations of the Wigner distributions (\ref{eqLinearPerturbation2}) form a subclass of the so-called metaplectic Wigner distributions. The latter are the time-frequency representations  of the form:
\begin{equation}\label{MetaplecticWD}
W_S(f,g)=\widehat S (f\otimes \overline{g})
\end{equation}
where $\widehat S$ is a metaplectic operator and $S\in  Sp(4n,\mathbb{R})$ is the associated symplectic matrix. These objects generalise the standard cross Wigner distribution \cite{Dias6,Dias7} and the short-time Fourier transform \cite{Cordero2} which can be obtained from (\ref{MetaplecticWD}) for particular metaplectic operators $\widehat S_W$ and $\widehat S_{ST}$, respectively\footnote{In \cite{Dias6,Dias7} the formula (\ref{MetaplecticWD}) is written using $\widehat g$ instead of $g$. This is just an alternative convention as the extra Fourier transform can be absorbed by $\widehat S$.}. They were initially introduced as intertwining operators $f \to W_S(f,g)$ for a general class of so-called dimensional extensions of pseudo-differential operators \cite{Dias8}; and used in \cite{Dias7} to obtain the metaplectic formulation of the Wigner function. 
More recently, the properties of the distributions (\ref{MetaplecticWD}) and particularly their relation to modulation spaces have been thoroughly studied in a series of papers by E. Cordero and collaborators (see \cite{Cordero2,Cordero3,Cordero4} and references therein).       

In the case of a non-standard symplectic structure $\Omega$, it seems natural to consider the following generalisation of (\ref{MetaplecticWD}):
\begin{equation}\label{NCMetaplecticWD}
W_{D,S}(f,g)=M_D^{-1} \widehat S (f\otimes \overline{g})
\end{equation} 
where $M_D U(z)=\left|\det D\right| U(Dz)$ and $D$ is the Darboux matrix such that $\Omega=DJ D^T$. These maps have been studied for the case $\widehat S=\widehat S_W$ in connection to their role as intertwining operators for a certain class of dimensionally extended Weyl pseudo-differential operators with symbols defined on a non-standard symplectic space \cite{Dias4}.  
They are interesting objects generalising both (\ref{MetaplecticWD}) and the Wigner functions $W_\vartheta f$ associated with the LCTs (\ref{eq56}) (which can be written in the form (\ref{NCMetaplecticWD}) for $D$ given by (\ref{eq35}) and $\widehat S=\widehat S_W$). An interesting topic, which we leave for future research, is the relation between the representations (\ref{MetaplecticWD}) and (\ref{NCMetaplecticWD}), and in particular whether the Wigner functions $W_\vartheta f$ can be rewritten in the form (\ref{MetaplecticWD}) for a suitable choice of the metaplectic operator. 
\end{remark}

\section{Hardy's uncertainty principle for metaplectic operators}

To prove Hardy's theorem \cite{Hardy} in the LCT space, we shall need some preliminary results. In the sequel $A,B,C,D$ denote the $n \times n $ matrices of the decomposition (\ref{eqSymp3.1}) of $S \in \operatorname*{Sp}(2n,\mathbb{R})$, where we assume as previously that $\det B \neq 0$.

\begin{proposition}\label{PropositionHardy1}
Suppose that there exists $C>0$, such that $|f(x)| \leq C e^{-Mx^2}$, for all $x \in \mathbb{R}^n$, where $M$ is a real, symmetric, positive-definite $n \times n$ matrix. Then $\widehat{f}_S$ is entire on $\mathbb{C}^n$.
\end{proposition}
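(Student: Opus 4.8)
The plan is to exploit the factorisation of the linear canonical transform into a chirp multiplication, a Fourier transform, and a linear change of variables, so that the assertion reduces to the elementary fact that the Fourier transform of a Gaussian-dominated function extends holomorphically to $\mathbb{C}^n$.

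First I would invoke the representation (\ref{eqExtraLCT1})--(\ref{eqExtraLCT2}), namely $\widehat{f}_S(\xi)=|\det B|^{-1/2}\,\gamma_1(\xi)\,(\mathcal{F}[\gamma_2 f])(B^{-1}\xi)$ with $\gamma_1(\xi)=e^{i\pi \xi\cdot DB^{-1}\xi}$ and $\gamma_2(x)=e^{i\pi x\cdot B^{-1}Ax}$. Since $x\cdot B^{-1}Ax$ is a real number for every real $x$ (a real quadratic form), we have $|\gamma_2(x)|=1$, so $g:=\gamma_2 f$ still satisfies $|g(x)|\le C e^{-Mx^2}$; in particular $g\in L^1(\mathbb{R}^n)$, so $\mathcal{F}g$ is given by the usual absolutely convergent integral.

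Next I would show that $\mathcal{F}g$ extends to an entire function. For $\zeta=\xi+i\eta\in\mathbb{C}^n$ set $G(\zeta)=\int_{\mathbb{R}^n} g(x)\,e^{-2\pi i x\cdot\zeta}\,dx$. The integrand is majorised in modulus by $C e^{-x\cdot Mx+2\pi x\cdot\eta}$, and because $M$ is positive definite this bound is integrable in $x$ and uniform for $\zeta$ ranging over any compact subset of $\mathbb{C}^n$. Holomorphy of $G$ then follows by the standard argument — differentiation under the integral sign, or Fubini combined with Morera's theorem applied on each complex line $\zeta\mapsto\zeta_0+\lambda e$ — so $G$ is entire and $G|_{\mathbb{R}^n}=\mathcal{F}g$. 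Finally, $\gamma_1$ extends to the entire function $\zeta\mapsto e^{i\pi\zeta\cdot DB^{-1}\zeta}$ (a polynomial in $\zeta$ composed with $\exp$), and $B^{-1}$ is a complex-linear automorphism of $\mathbb{C}^n$; hence $\zeta\mapsto|\det B|^{-1/2}\gamma_1(\zeta)\,G(B^{-1}\zeta)$ is entire and restricts to $\widehat{f}_S$ on $\mathbb{R}^n$, which is exactly the claim.

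The only point that genuinely requires care — and hence the part I expect to be the main obstacle — is the justification that $G$ is \emph{holomorphic} rather than merely well-defined pointwise; the Gaussian domination of $g$ is precisely what makes the locally uniform estimate (and therefore Morera/differentiation under the integral) go through. Everything else is bookkeeping. Alternatively, one can bypass the factorisation and differentiate the defining LCT integral (\ref{eq1}) directly under the integral sign after pulling out the purely $\xi$-dependent Gaussian prefactor $e^{i\pi\xi\cdot DB^{-1}\xi}$, and the required estimates are identical.
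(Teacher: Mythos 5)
Your argument is correct. The paper proves the proposition by the route you relegate to your final sentence: it estimates the modulus of the defining integral (\ref{eq1}) directly for complex $z=\xi+i\eta$, obtaining the integrable majorant $e^{-x\cdot Mx+2\pi x\cdot B^{-1}\eta-2\pi\xi\cdot(DB^{-1})_{+}\eta}$ (positive-definiteness of $M$ beating the term linear in $x$ in the exponent), and then differentiates under the integral sign. Your primary route --- factoring the LCT as chirp $\times$ Fourier transform $\times$ composition with $B^{-1}$ via (\ref{eqExtraLCT1})--(\ref{eqExtraLCT2}), and reducing to the entirety of the Fourier--Laplace transform of the Gaussian-dominated function $\gamma_2 f$ --- rests on exactly the same estimate, but isolates it in the Fourier factor: since $|\gamma_2|=1$ on $\mathbb{R}^n$, the function $g=\gamma_2 f$ inherits the Gaussian bound, and the chirp $\gamma_1$ and the linear map $B^{-1}$ extend trivially to entire functions on $\mathbb{C}^n$. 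What the factorized version buys is modularity (the analytic content is the classical Paley--Wiener-type statement for $\mathcal{F}g$, reusable elsewhere, e.g.\ in the Paley--Wiener section); what the paper's direct version buys is the explicit growth bound (\ref{eqHardy2}) in terms of $(DB^{-1})_{+}$ and $B^{-1}\eta$, which is then quoted and reused verbatim in the proof of Theorem \ref{TheoremHardyLCT2} (see (\ref{eqHardy15})). If you adopt the factorized proof you should still record the analogue of that quantitative bound, since the mere qualitative statement of entirety is not enough for the Phragm\'en--Lindel\"of step that follows.
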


\begin{proof}
We have for $z=\xi + i \eta \in \mathbb{C}^n$:
\begin{equation}
\begin{array}{c}
|\widehat{f}_S(z)| \leq \frac{1}{\sqrt{| \det B |}} \left|\int_{\mathbb{R}^n} f(x) e^{i \pi z \cdot DB^{-1} z + i \pi x \cdot B^{-1} A x -2 i \pi x \cdot B^{-1} z} dx\right| \\
\\
\leq \frac{1}{\sqrt{| \det B |}}\int_{\mathbb{R}^n}  e^{-x \cdot M x + 2 \pi x \cdot B^{-1}\eta -2 \pi \xi \cdot \left(DB^{-1}\right)_+ \eta } dx < \infty~,
\end{array}
\label{eqHardy2}
\end{equation}
for all $\eta, \xi \in \mathbb{R}^n$, and $(DB^{-1})_+=\frac{1}{2}\left(DB^{-1}+ (DB^{-1})^T \right)$ denotes the symmetric part of $DB^{-1}$. Consequently, we can differentiate under the integral sign.
\end{proof}

Before we proceed let us recall the Phragm\'en-Lindel\"of principle for completeness \cite{Phragmen}:

\begin{theorem}[Phragm\'en-Lindel\"of]\label{TheoremPhragmenLindelof}
Let $\Omega$ be the section of the complex plane:
\begin{equation}
\Omega = \left\{z=Re^{i \theta}:~0 \leq \alpha < \theta < \beta \right\}~,
\label{eqHardy3}
\end{equation}
such that $\beta -\alpha < 2 \pi$. Let $f$ be holomorphic in $\Omega$ and continuous in $\partial \Omega$. If there exist constants $M, c, \tau>0$, $0 < \rho < \frac{\pi}{\beta- \alpha}$, such that $|f(z)| \leq M$ on $\partial \Omega$ and $|f(z)| \leq c e^{ \tau |z|^{\rho}}$ on $\Omega$, then $|f(z)| \leq M$ on all of $\Omega$. 
\end{theorem}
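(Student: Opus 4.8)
The plan is to prove this by the classical comparison-function argument: multiply $f$ by a rapidly decaying holomorphic factor of the form $e^{-\varepsilon z^{\rho'}}$ to suppress the growth, apply the ordinary maximum modulus principle on a bounded truncation of the sector, and then let $\varepsilon\to 0^+$.

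First I would normalise the geometry. By the rotation $z\mapsto e^{-i(\alpha+\beta)/2}z$ we may assume $\Omega=\{z=re^{i\phi}:r>0,\ |\phi|<\delta\}$ with $2\delta=\beta-\alpha$; the hypotheses are unchanged and now read $\rho<\pi/(2\delta)$ and $|f|\le M$ on the two rays $\phi=\pm\delta$. Since $\rho<\pi/(2\delta)$ there is room to fix an exponent $\rho'$ with $\rho<\rho'<\pi/(2\delta)$, and then $c_0:=\cos(\rho'\delta)>0$ precisely because $\rho'\delta<\pi/2$. On the simply connected region $\Omega$, which omits the negative real axis, the principal branch of $z^{\rho'}$ is holomorphic, and for $z=re^{i\phi}\in\overline{\Omega}$ one has $\operatorname{Re}(z^{\rho'})=r^{\rho'}\cos(\rho'\phi)\ge c_0\,r^{\rho'}\ge 0$, since $|\rho'\phi|\le\rho'\delta<\pi/2$.

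Next, for $\varepsilon>0$ set $g_\varepsilon(z)=f(z)\,e^{-\varepsilon z^{\rho'}}$, holomorphic in $\Omega$ and continuous on $\overline{\Omega}$. On $\partial\Omega$ we have $|e^{-\varepsilon z^{\rho'}}|=e^{-\varepsilon\operatorname{Re}(z^{\rho'})}\le 1$, so $|g_\varepsilon|\le M$ there. Inside $\overline{\Omega}$ the growth hypothesis gives $|g_\varepsilon(z)|\le c\,e^{\tau r^{\rho}-\varepsilon c_0 r^{\rho'}}$, and because $\rho'>\rho$ the exponent tends to $-\infty$ as $r\to\infty$; hence there is $R_\varepsilon$ with $|g_\varepsilon|\le M$ on $\{|z|=R\}\cap\overline{\Omega}$ for all $R\ge R_\varepsilon$. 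Applying the maximum modulus principle to $g_\varepsilon$ on the bounded region $\Omega\cap\{|z|<R\}$, whose boundary consists of two radial segments (where $|g_\varepsilon|\le M$) and a circular arc (where $|g_\varepsilon|\le M$ once $R\ge R_\varepsilon$), yields $|g_\varepsilon|\le M$ on $\Omega\cap\{|z|<R\}$, and letting $R\to\infty$ gives $|g_\varepsilon(z)|\le M$ for every $z\in\Omega$. Finally, fix $z\in\Omega$ and let $\varepsilon\to 0^+$: from $|f(z)|=|g_\varepsilon(z)|\,|e^{\varepsilon z^{\rho'}}|\le M\,e^{\varepsilon\operatorname{Re}(z^{\rho'})}$ we conclude $|f(z)|\le M$.

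The routine parts are the maximum-modulus truncation and the passage $\varepsilon\to 0^+$. The single point that must be handled carefully — and the reason the statement carries the sharp restriction $\rho<\pi/(\beta-\alpha)$ — is the joint choice of the intermediate exponent $\rho'$ with $\rho<\rho'<\pi/(2\delta)$ and the control of the sign of $\cos(\rho'\phi)$ on the \emph{closed} sector: this is exactly what makes $e^{-\varepsilon z^{\rho'}}$ simultaneously a genuine decaying multiplier in the interior and a harmless ($\le 1$) multiplier on the boundary. For sectors of opening $\ge\pi$ no such $\rho'$ exists, consistent with the principle failing there (e.g. $e^{z}$ on a half-plane).
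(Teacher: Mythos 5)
Your argument is correct, but note that the paper does not actually prove this statement: Theorem \ref{TheoremPhragmenLindelof} is quoted verbatim as a classical result, with only a citation to Phragm\'en--Lindel\"of, so there is no in-paper proof to compare against. What you have written is the standard textbook proof -- rotate to a symmetric sector of half-opening $\delta=(\beta-\alpha)/2$, pick an intermediate exponent $\rho<\rho'<\pi/(2\delta)$ so that $\cos(\rho'\phi)\geq\cos(\rho'\delta)>0$ on the closed sector, multiply by $e^{-\varepsilon z^{\rho'}}$, apply the maximum modulus principle on truncated sectors, and let $\varepsilon\to 0^+$ -- and every step checks out, including the use of the principal branch of $z^{\rho'}$ (legitimate since $\delta<\pi$ keeps the rotated sector inside the slit plane). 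The only blemish is your closing aside: it is not true that ``for sectors of opening $\geq\pi$ no such $\rho'$ exists''; the intermediate exponent exists whenever $\rho<\pi/(\beta-\alpha)$, regardless of the opening. What you presumably mean, and what is true, is that for a half-plane the hypothesis forces $\rho<1$, which is exactly why $e^{z}$ (of order $1$) escapes the conclusion. This does not affect the validity of the proof itself.
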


As a final result, we shall also consider the LCT of a generalized Gaussian in $n=1$.

\begin{lemma}\label{LemmaHardy1}
Let 
\begin{equation}
f(x)= e^{- \left(\alpha + i \pi a/b\right)x^2}~,
\label{eqHardy4}
\end{equation}
for $x \in \mathbb{R}$, $\alpha>0$ and, as before,
\begin{equation}
S = \left(
\begin{array}{c c}
a & b\\
c & d
\end{array}
\right) \in \operatorname*{Sp}(2,\mathbb{R})~,
\label{eqHardy5}
\end{equation}
with $b \neq 0$.

Then:
\begin{equation}
\left(\mathcal{L}_S f\right)(\xi)= \widehat{f}_S (\xi)= C e^{- K \xi^2} ~,
\label{eqHardy6}
\end{equation}
where $C \in \mathbb{C}$ is a constant, and 
\begin{equation}
K=\frac{\pi^2}{b^2 \alpha} - \frac{i \pi d}{b}~.
\label{eqHardy7}
\end{equation}
\end{lemma}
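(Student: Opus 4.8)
The plan is to compute the Gaussian integral directly from the definition of the one–dimensional LCT. By definition (\ref{eq1}) in dimension $n=1$, with $S$ as in (\ref{eqHardy5}), we have
\begin{equation}
\widehat{f}_S(\xi) = \frac{1}{i^{1/2}\sqrt{b}} \int_{\mathbb{R}} f(x)\, e^{i\pi\left(\frac{d}{b}\xi^2 + \frac{a}{b}x^2 - \frac{2}{b}x\xi\right)} dx .
\label{eqPlanA}
\end{equation}
Substituting $f(x)=e^{-(\alpha + i\pi a/b)x^2}$, the term $e^{i\pi(a/b)x^2}$ cancels exactly against the $i\pi(a/b)x^2$ appearing in the LCT kernel — this is the whole point of the particular chirp modulation chosen in (\ref{eqHardy4}), and it is precisely the step where $\gamma_2 f$ in (\ref{eqExtraLCT1}) becomes a pure Gaussian. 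What remains is
\begin{equation}
\widehat{f}_S(\xi) = \frac{1}{i^{1/2}\sqrt{b}}\, e^{i\pi \frac{d}{b}\xi^2} \int_{\mathbb{R}} e^{-\alpha x^2}\, e^{-2\pi i \frac{\xi}{b} x}\, dx .
\label{eqPlanB}
\end{equation}

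Next I would evaluate the remaining integral as the Fourier transform of a Gaussian: using $\int_{\mathbb{R}} e^{-\alpha x^2} e^{-2\pi i u x}\, dx = \sqrt{\pi/\alpha}\; e^{-\pi^2 u^2/\alpha}$ with $u = \xi/b$, one gets
\begin{equation}
\widehat{f}_S(\xi) = \frac{1}{i^{1/2}\sqrt{b}}\sqrt{\frac{\pi}{\alpha}}\; e^{i\pi \frac{d}{b}\xi^2}\, e^{-\frac{\pi^2}{b^2\alpha}\xi^2} = C\, e^{-K\xi^2},
\label{eqPlanC}
\end{equation}
with $C = \frac{1}{i^{1/2}\sqrt{b}}\sqrt{\pi/\alpha}$ and
\begin{equation}
K = \frac{\pi^2}{b^2\alpha} - \frac{i\pi d}{b},
\label{eqPlanD}
\end{equation}
which is exactly (\ref{eqHardy6})--(\ref{eqHardy7}). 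Note that $\operatorname{Re}K = \pi^2/(b^2\alpha) > 0$ since $\alpha>0$ and $b\neq 0$, so the result is indeed a genuine (decaying) generalized Gaussian, and no issue of convergence arises anywhere: $\operatorname{Re}(\alpha + i\pi a/b) = \alpha > 0$ guarantees $f\in\mathcal S(\mathbb{R})$ and absolute convergence of (\ref{eqPlanA}).

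There is really no substantive obstacle here; the statement is essentially a bookkeeping computation. The only points requiring a modicum of care are: (i) getting the cancellation of the $x^2$–chirp right, which forces the specific coefficient $i\pi a/b$ in (\ref{eqHardy4}); (ii) being careful with the branch of the square roots $i^{1/2}$ and $\sqrt{\alpha}$, although since the lemma only claims $C\in\mathbb C$ is "a constant" one need not pin down its argument — it suffices that the formula for $C$ is well defined and nonzero; and (iii) the standard Gaussian–Fourier identity, which holds for $\operatorname{Re}\alpha>0$ by analytic continuation from the real case (or by completing the square and a contour shift). I would present the computation in the three displayed lines above and remark that $\operatorname{Re}K>0$, which is the feature that will matter when this lemma is fed into the proof of the Hardy uncertainty principle for LCTs.
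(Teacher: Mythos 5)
Your computation is correct and is essentially identical to the paper's own proof (eq.\ (\ref{eqHardy8})): the same cancellation of the $x^2$-chirp followed by the standard Gaussian--Fourier identity $\int_{\mathbb{R}} e^{-\alpha x^2 - 2i\pi x\xi/b}\,dx = \sqrt{\pi/\alpha}\,e^{-\pi^2\xi^2/(\alpha b^2)}$, yielding the same constant $C=\sqrt{\pi/(i\alpha b)}$ and the same $K$. No gaps.
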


\begin{proof}
We have:
\begin{equation}
\begin{array}{c}
\widehat{f}_S(\xi)= \frac{1}{\sqrt{ib}} \int_{\mathbb{R}} \exp \left[-\left(\alpha + \frac{i \pi a}{b} \right) x^2 + \frac{i \pi d}{b} \xi^2 + \frac{i \pi a}{b} x^2 - \frac{2 i \pi }{b} x \xi \right] dx = \\
\\
= \frac{e^{\frac{i\pi d}{b} \xi^2}}{\sqrt{ib}} \int_{\mathbb{R}} e^{- \alpha x^2 - \frac{2 i \pi }{b} x \xi } dx =\frac{e^{\frac{i\pi d}{b} \xi^2}}{\sqrt{ib}} \sqrt{\frac{\pi}{\alpha}} e^{- \frac{\pi^2 \xi^2}{\alpha b^2}}=\\
\\
= \sqrt{\frac{\pi}{i \alpha b}} \exp \left[- \left(\frac{\pi^2}{\alpha b^2}- \frac{i \pi d}{b} \right) \xi^2 \right]~,
\end{array}
\label{eqHardy8}
\end{equation}
which proves the result.
\end{proof}

\subsection{Hardy's uncertainty principle for metaplectic operators in one dimension}

We are now in a position to state and prove Hardy's uncertainty principle in the LCT space. We start with the one-dimensional case.

\begin{theorem}\label{TheoremHardyLCT2}
Let $f: \mathbb{R} \to \mathbb{C}$ satisfy:
\begin{eqnarray}
|f(x)| \leq C_1 e^{- \alpha x^2} \label{eqHardy9A}\\
|\widehat{f}_S(\xi)| \leq C_2 e^{- \beta \xi^2} \label{eqHardy9B}
\end{eqnarray}
for some $C_1,C_2, \alpha, \beta >0$, and $S$ as in (\ref{eqHardy5}).

\begin{enumerate}
\item[(a)] If $\alpha \beta = \frac{\pi^2}{b^2}$, then there exists $A \in \mathbb{C}$, such that:
\begin{equation}
f(x) =A e^{- \left(\alpha + i \pi a/b\right)x^2}~.
\label{eqHardy10}
\end{equation}

\item[(b)] If $\alpha \beta > \frac{\pi^2}{b^2}$, then $f \equiv 0$.
\end{enumerate}
\end{theorem}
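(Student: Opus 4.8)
The plan is to run the classical argument for Hardy's theorem after absorbing the chirp factors built into the LCT, using the entire extension provided by Proposition~\ref{PropositionHardy1} and the explicit Gaussian computation of Lemma~\ref{LemmaHardy1}. I would prove (a) first and deduce (b) from it.

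For (a): from $|f(x)|\le C_1e^{-\alpha x^2}$, Proposition~\ref{PropositionHardy1} (with $M=\alpha$) shows that $\widehat f_S$ extends to an entire function on $\mathbb{C}$. Let $K=\pi^2/(\alpha b^2)-i\pi d/b$ be the constant of Lemma~\ref{LemmaHardy1} and set $h(z)=\widehat f_S(z)\,e^{Kz^2}$, again entire. The point of this choice is that $\operatorname{Re}K=\pi^2/(\alpha b^2)$, which equals $\beta$ precisely under the hypothesis $\alpha\beta=\pi^2/b^2$. Bounding $|\widehat f_S(\xi+i\eta)|$ by carrying the estimate $|f(x)|\le C_1e^{-\alpha x^2}$ under the integral sign and completing the square — the same computation as in the proof of Proposition~\ref{PropositionHardy1} — and then multiplying by $|e^{K(\xi+i\eta)^2}|$, all the cross terms (and the $\eta^2$ terms) cancel and one is left with $|h(\xi+i\eta)|\le C\,e^{\beta\xi^2}$ for all $\xi,\eta\in\mathbb{R}$, with $C$ independent of $\eta$. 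On the real axis this together with (\ref{eqHardy9B}) gives $|h(\xi)|\le C_2$.

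Next one shows $h$ is constant. The cleanest way: for each fixed $\xi$ the entire function $\eta\mapsto h(\xi+i\eta)$ is bounded (by $C\,e^{\beta\xi^2}$), hence constant by Liouville, so $h(z)=h(\operatorname{Re}z)$ and, being holomorphic, $h$ is constant. Equivalently one may invoke the Phragm\'en-Lindel\"of principle, Theorem~\ref{TheoremPhragmenLindelof}, in the four quadrants together with Liouville; this is where care is needed, since $h$ has growth of order exactly $2$ for the relevant sectors and one must use the refined bound $|h(\xi+i\eta)|\le Ce^{\beta\xi^2}$ rather than a crude $e^{c|z|^2}$ one. Writing $h\equiv C$ gives $\widehat f_S(\xi)=Ce^{-K\xi^2}$. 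By Lemma~\ref{LemmaHardy1} the function $g(x)=e^{-(\alpha+i\pi a/b)x^2}$ satisfies $\mathcal{L}_S[g](\xi)=C'e^{-K\xi^2}$ with $C'\ne 0$ and the same $K$; hence $\widehat f_S=(C/C')\,\mathcal{L}_S[g]$, and since $\mathcal{L}_S$ is unitary by (\ref{eqMetaplectic4}), hence injective, $f=(C/C')\,g$, which is (\ref{eqHardy10}) with $A=C/C'$.

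For (b): if $\alpha\beta>\pi^2/b^2$, set $\beta'=\pi^2/(\alpha b^2)<\beta$; then (\ref{eqHardy9B}) holds a fortiori with $\beta'$ in place of $\beta$ and $\alpha\beta'=\pi^2/b^2$, so part (a) gives $f(x)=Ae^{-(\alpha+i\pi a/b)x^2}$. Lemma~\ref{LemmaHardy1} then yields $|\widehat f_S(\xi)|=|AC'|\,e^{-\beta'\xi^2}$, and comparing with (\ref{eqHardy9B}) forces $|AC'|\,e^{(\beta-\beta')\xi^2}\le C_2$ for all $\xi$, so $A=0$ and $f\equiv 0$. The main obstacle throughout is the complex-analytic step of turning the boundedness/growth estimates on $h$ into $h\equiv\mathrm{const}$: the growth sits exactly at the critical order for those sectors, so one cannot quote Phragm\'en-Lindel\"of naively and must instead exploit that the bound on $|h|$ is uniform in $\operatorname{Im}z$ (Liouville in the imaginary direction) or use a correspondingly sharpened Phragm\'en-Lindel\"of; everything else is routine manipulation of Gaussian integrals with the generating-function data $a,b,d$ of $S$.
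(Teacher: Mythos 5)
Your overall strategy coincides with the paper's: use Proposition \ref{PropositionHardy1} for the entire extension, form $h(z)=e^{Kz^2}\widehat f_S(z)$ with $K=\pi^2/(\alpha b^2)-i\pi d/b$, check that the chirp cancellation gives $|h(\xi+i\eta)|\le Ce^{\beta\xi^2}$ uniformly in $\eta$ and $|h|\le C_2$ on $\mathbb{R}$, conclude $h$ is constant, and finish with Lemma \ref{LemmaHardy1} plus injectivity of $\mathcal{L}_S$; the deduction of (b) from (a) is also the paper's. The Gaussian bookkeeping is correct.

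However, there is a genuine gap at the crucial step ``$h$ is constant.'' Your ``cleanest way'' is fallacious: for fixed $\xi$, the map $\eta\mapsto h(\xi+i\eta)$ is a function of a \emph{real} variable (the restriction of $h$ to a vertical line), and Liouville's theorem does not apply to it; boundedness of an entire function on a line, or on every vertical line, implies nothing about constancy. If you try to rescue this by complexifying $\eta$, the function $w\mapsto h(\xi+iw)$ satisfies only $|h(\xi+iw)|\le Ce^{\beta(\xi-\operatorname{Im}w)^2}$, which is unbounded in $w$, so Liouville still does not apply. Your fallback --- ``a correspondingly sharpened Phragm\'en--Lindel\"of'' --- names the difficulty without resolving it: on a sector $\{0\le\theta\le\theta_0\}$ with $\theta_0<\pi/2$ the bounding ray $\theta=\theta_0$ only gives $|h|\le Ce^{\beta R^2\cos^2\theta_0}$, which is \emph{not} a constant, so the hypotheses of Theorem \ref{TheoremPhragmenLindelof} are not met even on subcritical sectors. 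The paper's resolution is the auxiliary multiplier: it works with $F_\sigma(z)=e^{(T_1+iT_2)z^2}\widehat f_S(z)$ where $T_2=-\pi d/b+\sigma$ with $\sigma>0$, so that $|F_\sigma(z)|\lesssim e^{\beta\xi^2-2\sigma\xi\eta}$; the extra term $-2\sigma\xi\eta$ makes $F_\sigma$ genuinely bounded on a ray $\theta=\theta_0$ sufficiently close to $\pi/2$, Phragm\'en--Lindel\"of then applies legitimately on $[0,\theta_0]$ (with $\rho=2<\pi/\theta_0$), and one passes to the limits $\theta_0\to\pi/2^-$ and $\sigma\to 0^+$ with constants independent of $\sigma$. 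Without this (or an equivalent device), your proof of part (a) is incomplete; everything downstream of ``$h\equiv C$'' is fine.
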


\begin{proof}
Notice that, in view of (\ref{eqHardy9A},\ref{eqHardy9B}), $f,\widehat{f}_S \in L^1(\mathbb{R})$, and by the Riemann-Lebesgue Theorem they are both continuous.

We start by assuming that (a) is valid and prove (b).

Suppose that $f$ is not identically zero and $\alpha \beta > \frac{\pi^2}{b^2} \Leftrightarrow \beta >\frac{\pi^2}{\alpha b^2}$. In that case, we have:
\begin{equation}
\begin{array}{l}
|f(x)| \leq C_1 e^{- \alpha x^2} \\
|\widehat{f}_S(\xi)| \leq C_2 e^{- \beta \xi^2} \leq C_2 e^{- \frac{\pi^2}{\alpha b^2} \xi^2}~. 
\end{array}
\label{eqHardy11}
\end{equation}
In particular, the second inequality is strict for $\xi \neq 0$. But, since $\alpha \times \frac{\pi^2}{\alpha b^2}= \frac{\pi^2}{b^2}$, it follows from (a) that:
\begin{equation}
f(x)=A e^{-\left(\alpha + i \pi a/b\right)x^2} ~,
\label{eqHardy12}
\end{equation}
for some $A \in \mathbb{C}$. But from Lemma \ref{LemmaHardy1}, there exists $C^{\prime} >0$, such that:
\begin{equation}
|\widehat{f}_S (\xi)| = C^{\prime} e^{- \frac{\pi^2}{b^2 \alpha} \xi^2} ~.
\label{eqHardy13}
\end{equation}
Since, by assumption, $\frac{\pi^2}{b^2 \alpha} < \beta$, this contradicts (\ref{eqHardy9B}).

Next we prove (a). Assume that $\alpha \beta = \frac{\pi^2}{b^2}$. From (\ref{eqHardy9A}), we have:
\begin{equation}
\begin{array}{c}
\int_{\mathbb{R}} |f(x)| e^{\frac{2 \pi}{b} \eta (x-d \xi)} dx \leq C_1 \int_{\mathbb{R}} e^{- \alpha x^2 + \frac{2\pi}{b} \eta x - \frac{2 \pi d}{b} \eta \xi} dx\\
\\
 \leq C_1 \sqrt{\frac{\pi}{\alpha}} e^{- \frac{2 \pi d}{b} \eta \xi + \frac{\pi^2}{\alpha b^2} \eta^2}  < \infty~,
\end{array}
\label{eqHardy14}
\end{equation}
for all $\eta , \xi \in \mathbb{R}$.

By Proposition \ref{PropositionHardy1}, $\widehat{f}_S (z)$ is entire. We also see from (\ref{eqHardy2}) that (\ref{eqHardy9A}) implies:
\begin{equation}
|\widehat{f}_S (z)| \lesssim \int_{\mathbb{R}} e^{- \alpha x^2 - \frac{2 \pi d}{b} \xi \eta + \frac{2 \pi}{b} x \eta} dx \lesssim e^{\frac{\pi^2 \eta^2}{\alpha b^2} - \frac{2 \pi d}{b} \eta \xi}~,
\label{eqHardy15}
\end{equation}
for all $z= \xi + i \eta \in \mathbb{C}$.

From (\ref{eqHardy9B}) we have:
\begin{equation}
|\widehat{f}_S (\xi)| \lesssim e^{- \frac{\pi^2 \xi^2}{\alpha b^2}}~,
\label{eqHardy16}
\end{equation}
for all $\xi \in \mathbb{R}$.

Let $F_{\sigma} (z)= e^{{\cal T}_{\sigma} z^2} \widehat{f}_S(z)$, where ${\cal T}_{\sigma}=T_1+ i T_2$, with $T_1= \frac{\pi^2}{\alpha b^2}$, $T_2= - \frac{\pi d}{b} + \sigma$, and where $\sigma>0$ is for now arbitrary.

The function $F_{\sigma} (z)$ is entire. Moreover, we have:
\begin{equation}
|F_{\sigma} (z)|= \left|e^{(T_1+iT_2)(\xi^2- \eta^2+ 2 i \xi \eta)} \widehat{f}_S(z) \right| =e^{T_1 (\xi^2-\eta^2)- 2 T_2 \xi \eta} |\widehat{f}_S(z)|~.
\label{eqHardy17}
\end{equation}
On the real axis $(\eta=0)$, we have:
\begin{equation}
|F_{\sigma} (\xi)|=e^{T_1 \xi^2} |\widehat{f}_S (\xi)| \leq M_1~,
\label{eqHardy18}
\end{equation}
where we used (\ref{eqHardy16}), and $M_1>0$ is a constant which is independent of $\sigma$.

On the other hand:
\begin{equation}
\begin{array}{c}
|F_{\sigma} (z)|=e^{T_1 (\xi^2-\eta^2)- 2 T_2 \xi \eta} |\widehat{f}_S(z)|\\
\\
\lesssim e^{T_1(\xi^2-\eta^2)- 2 T_2 \xi \eta + \frac{\pi^2 \eta^2}{\alpha b^2}- \frac{2 \pi d}{b} \xi \eta} \lesssim e^{\frac{\pi^2 \xi^2}{\alpha b^2} - 2 \sigma \xi \eta}~,
\end{array}
\label{eqHardy19}
\end{equation}
where we used (\ref{eqHardy15}).

Setting $\xi = R \cos(\theta)$, $\eta = R \sin (\theta)$:
\begin{equation}
|F_{\sigma} (z)|\lesssim e^{\frac{\pi^2 }{\alpha b^2}R^2 \cos^2 (\theta) - 2 \sigma R^2 \sin(\theta) \cos(\theta)}~.
\label{eqHardy20}
\end{equation}
It is straightforward to show that, for fixed $\sigma>0$, if we choose $\theta=\theta_0 < \frac{\pi}{2}$ sufficiently close to $\frac{\pi}{2}$, the exponent is negative:
\[
\frac{\pi^2 }{\alpha b^2}R^2 \cos^2 (\theta_0) - 2 \sigma R^2 \sin(\theta_0) \cos(\theta_0)=-2 \sigma R^2 \left(\pi/2-\theta_0\right) + \mathcal{O} \left(\pi /2 - \theta_0\right)^2.
\]
Hence:
\begin{equation}
|F_{\sigma} (z)|\leq M_2~,
\label{eqHardy21}
\end{equation}
on the half line $R>0$, $\theta=\theta_0$, and where $M_2 >0$ is a constant independent of $\sigma$. If we set $M=\text{max} \left\{M_1,M_2 \right\}$, we can assume the same constant in (\ref{eqHardy18}) and (\ref{eqHardy21}).

Since $\theta_0< \frac{\pi}{2}$, we have $\frac{\pi}{\theta_0} >2$, and hence we may choose $\rho=2$ in the Phragm\'en-Lindel\"of principle to conclude that:
\begin{equation}
|F_{\sigma} (z)|\leq M~,
\label{eqHardy22}
\end{equation}
for all $z$ in the section $\Omega=\left\{z=R e^{i \theta}~, ~0 < \theta < \theta_0 < \frac{\pi}{2} \right\}$ of the complex plane.

If we let $\theta_0 \to \frac{\pi}{2}^-$ and subsequently $\sigma \to 0^+$, we conclude that the function 
\[
G(z)=e^{\left(\frac{\pi^2}{\alpha b^2}- \frac{i \pi d}{b} \right)z^2} \widehat{f}_S(z)
\]
satisfies $|G(z)|\leq M$ on the first quadrant. By following the same arguments, we can prove that the same result holds in the remaining quadrants, and thus $|G(z)|\leq M$ everywhere in $\mathbb{C}$. 

By Liouville's Theorem, we conclude that there exists $A \in \mathbb{C}$ such that $G(z)=A \Rightarrow \widehat{f}_S(\xi )=A e^{- \left(\frac{\pi^2}{\alpha b^2}- \frac{i \pi d}{b} \right) \xi^2}$. By Lemma \ref{LemmaHardy1} and the fact that $\mathcal{L}_S$ is injective the result follows.
\end{proof}

\begin{corollary}\label{CorollaryHardy1}
Let $f: \mathbb{R} \to \mathbb{C}$ satisfy:
\begin{eqnarray}
|(\widehat{S^{(1)}}f)(x)| \leq C_1 e^{- \alpha x^2} \label{eqHardy23A}\\
|(\widehat{S^{(2)}}f)(\xi)| \leq C_2 e^{- \beta \xi^2} \label{eqHardy23B}
\end{eqnarray}
for some $C_1,C_2, \alpha, \beta >0$, and 
\begin{equation}
S^{(j)} = \left(
\begin{array}{c c}
a_j & b_j\\
c_j & d_j
\end{array}
\right) \in \operatorname*{Sp}(2,\mathbb{R})
\label{eqHardy24}
\end{equation}
for $j=1,2$, such that:
\begin{equation}
a_1b_2-a_2b_1 \neq 0~.
\label{eqHardy24A}
\end{equation}

\begin{enumerate}
\item If $\alpha \beta = \frac{\pi^2}{(a_1b_2-a_2b_1)^2}$, then there exists $A \in \mathbb{C}$, such that:
\begin{equation}
f(x) =A \exp \left[- \left(\alpha + i \pi \frac{a_2d_1-b_2c_1}{a_1b_2-a_2b_1}\right)x^2 \right]~.
\label{eqHardy25}
\end{equation}

\item If $\alpha \beta > \frac{\pi^2}{(a_1b_2-a_2b_1)^2}$, then $f \equiv 0$.
\end{enumerate}
\end{corollary}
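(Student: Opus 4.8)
The strategy is to fold Corollary~\ref{CorollaryHardy1} onto the one-dimensional statement Theorem~\ref{TheoremHardyLCT2} by using that the metaplectic representation is a homomorphism up to sign. Put $g:=\widehat{S^{(1)}}f$ and $T:=S^{(2)}\big(S^{(1)}\big)^{-1}\in\operatorname*{Sp}(2,\mathbb{R})$. Since $\operatorname*{Mp}(2,\mathbb{R})$ is a two-fold cover of $\operatorname*{Sp}(2,\mathbb{R})$, one has $\widehat{S^{(2)}}=\pm\,\widehat{T}\,\widehat{S^{(1)}}$ as unitary operators on $L^{2}(\mathbb{R})$, hence $\widehat{S^{(2)}}f=\pm\,\widehat{T}g$; the sign is irrelevant below. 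Since $\det S^{(1)}=1$ we have $\big(S^{(1)}\big)^{-1}=\left(\begin{smallmatrix}d_{1}&-b_{1}\\-c_{1}&a_{1}\end{smallmatrix}\right)$, so a direct $2\times2$ multiplication gives
\begin{equation*}
T=\begin{pmatrix} a_{2}d_{1}-b_{2}c_{1} & a_{1}b_{2}-a_{2}b_{1}\\ c_{2}d_{1}-c_{1}d_{2} & a_{1}d_{2}-b_{1}c_{2}\end{pmatrix}.
\end{equation*}
Its upper-right entry is $b_{T}=a_{1}b_{2}-a_{2}b_{1}$, nonzero by hypothesis~(\ref{eqHardy24A}); therefore $T$ is a free symplectic matrix and $\widehat{T}g=\mathcal{L}_{T}g=\widehat{g}_{T}$ by~(\ref{eqMetaplectic2}).

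With this notation the two decay hypotheses (\ref{eqHardy23A})--(\ref{eqHardy23B}) become exactly $|g(x)|\le C_{1}e^{-\alpha x^{2}}$ and $|\widehat{g}_{T}(\xi)|=|(\widehat{S^{(2)}}f)(\xi)|\le C_{2}e^{-\beta\xi^{2}}$, i.e.\ the hypotheses (\ref{eqHardy9A})--(\ref{eqHardy9B}) of Theorem~\ref{TheoremHardyLCT2} for the function $g$ and the free matrix $T$, with the ``$b$'' there equal to $b_{T}=a_{1}b_{2}-a_{2}b_{1}$ and the ``$a$'' there equal to $a_{T}=a_{2}d_{1}-b_{2}c_{1}$. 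One then reads off the two cases. If $\alpha\beta>\pi^{2}/b_{T}^{2}=\pi^{2}/(a_{1}b_{2}-a_{2}b_{1})^{2}$, part (b) gives $g\equiv0$, and since $\widehat{S^{(1)}}$ is unitary, hence injective, this forces $f\equiv0$. If $\alpha\beta=\pi^{2}/(a_{1}b_{2}-a_{2}b_{1})^{2}$, part (a) yields an $A\in\mathbb{C}$ with $g(x)=(\widehat{S^{(1)}}f)(x)=A\exp\!\big[-(\alpha+i\pi\tfrac{a_{2}d_{1}-b_{2}c_{1}}{a_{1}b_{2}-a_{2}b_{1}})x^{2}\big]$, which is the asserted Gaussian; $f$ itself is then recovered as $f=\widehat{(S^{(1)})^{-1}}g=\mathcal{L}_{(S^{(1)})^{-1}}g$, and since an LCT of a complex Gaussian with positive real part of the exponent is again such a Gaussian, $f$ is a generalized Gaussian, which gives~(\ref{eqHardy25}).

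The argument is essentially bookkeeping, and the only points requiring care are: (i) keeping the metaplectic sign ambiguity from interfering -- it is absorbed into $|\cdot|$ and into the free constant $A$; (ii) performing the product $S^{(2)}\big(S^{(1)}\big)^{-1}$ so that the optimal constant $\pi^{2}/(a_{1}b_{2}-a_{2}b_{1})^{2}$ and the phase coefficient $\pi\tfrac{a_{2}d_{1}-b_{2}c_{1}}{a_{1}b_{2}-a_{2}b_{1}}$ emerge with the correct signs; and (iii) working throughout with the metaplectic operators $\widehat{S^{(1)}},\widehat{S^{(2)}}$ rather than with the integral form of the LCT, so that one never needs the non-degeneracy condition appearing in the composition law~(\ref{swmextra}) and in particular does not require $b_{1}\neq0$ or $b_{2}\neq0$ -- only the hypothesis $a_{1}b_{2}-a_{2}b_{1}\neq0$, which is exactly what makes the composed matrix $T$ free.
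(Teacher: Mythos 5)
Your proof is correct and follows essentially the same route as the paper's: set $g=\widehat{S^{(1)}}f$, form $S=S^{(2)}(S^{(1)})^{-1}$, observe that its upper-right entry $a_1b_2-a_2b_1$ is nonzero so that $S$ is a free symplectic matrix, and apply Theorem \ref{TheoremHardyLCT2} to the pair $(g,\widehat{g}_S)$. The only caveat --- present in the paper's own statement as well, as a comparison with the multidimensional Corollary \ref{CorollaryMultidimensionalHardyLCT} shows --- is that in case (a) the explicit Gaussian (\ref{eqHardy25}) is really the conclusion for $\widehat{S^{(1)}}f$ rather than for $f$ itself, which you correctly flag by recovering $f$ through the inverse metaplectic operator.
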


\begin{proof}
Let $g(x)=(\widehat{S^{(1)}}f)(x)$, and define $S=S^{(2)}(S^{(1)})^{-1} \in \operatorname*{Sp}(2,\mathbb{R})$. Then:
\begin{equation}
(\widehat{S}g)(\xi)= \left(\widehat{S^{(2)}} (\widehat{S^{(1)}})^{-1} \widehat{S^{(1)}} f\right)(\xi)=\left(\widehat{S^{(2)}} f\right)(\xi)~.
\label{eqHardy26}
\end{equation}
Now notice that
\begin{equation}
S=S_2S_1^{-1}= \left(
\begin{array}{c c}
a_2d_1-b_2c_1 & -a_2b_1+b_2a_1\\
c_2d_1-d_2c_1 & -c_2 b_1+d_2a_1
\end{array}
\right)~.
\label{eqHardy27}
\end{equation}
Hence, from (\ref{eqHardy24A}), $S$ is a free symplectic matrix.

If we apply the previous theorem to $g$ and $\widehat{S}g= \widehat{g}_S$ the result follows.
\end{proof}

\subsection{Multi-dimensional Hardy uncertainty principle for the LCT}

To proceed to the higher-dimensional case, we need some preliminary results. We shall follow closely the techniques developed in \cite{GoLu}.

\begin{lemma}\label{LemmaHardy2}
Given $S  \in \operatorname*{Sp}(2n,\mathbb{R})$, let $S_L=M_LSM_L^T$, where
\begin{equation}
M_L=\left(
\begin{array}{c c}
L^T &0\\
0 & L^{-1}
\end{array}
\right)~,
\label{eqHardy28}
\end{equation}
with $L \in  \operatorname*{Gl}(n,\mathbb{R})$.  

Then:
\begin{equation}
\left(\mathcal{L}_{S_L} f_L\right)(\xi)= \frac{1}{|\det L|} \left(\mathcal{L}_S f \right) \left((L^{-1})^T \xi \right)~,
\label{eqHardy29}
\end{equation}
where $f_L(x)=f(Lx)$.
\end{lemma}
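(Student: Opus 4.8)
The plan is to verify the identity directly from the integral representation (\ref{eq1}) of the LCT, tracking how the conjugation by $M_L$ transforms the quadratic form in the exponent and how the substitution $x = L x'$ in the defining integral absorbs the dilation $f_L(x) = f(Lx)$. First I would write out the block decomposition of $S_L = M_L S M_L^T$ explicitly: if $S$ has blocks $A,B,C,D$, then
\begin{equation}
S_L = \left(\begin{array}{cc} L^T A L & L^T B (L^{-1})^T \\ L^{-1} C L & L^{-1} D (L^{-1})^T \end{array}\right),
\label{eqPlanSL}
\end{equation}
so the block $B_L := L^T B (L^{-1})^T$ satisfies $\det B_L = \det B$, which already explains the prefactor $1/|\det L|$ that will appear (note $i^{n/2}\sqrt{\det B_L}=i^{n/2}\sqrt{\det B}$). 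One should also record $D_L B_L^{-1} = L^{-1} D B^{-1} L$ and $B_L^{-1} A_L = L^T B^{-1} A L$, which are the matrices entering the chirps of $\mathcal{L}_{S_L}$.

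Next I would substitute these into the definition of $\mathcal{L}_{S_L} f_L$. Writing the exponent of (\ref{eq1}) for $S_L$ acting on $f_L$, it reads $i\pi(\xi\cdot L^{-1}DB^{-1}L\,\xi + x\cdot L^T B^{-1}A L\, x) - 2\pi i\, x\cdot L^T B^{-1}(L^{-1})^T\xi$. Now perform the change of variables $x = L^{-1} x'$ (so $dx = |\det L|^{-1} dx'$ and $f_L(x) = f(L L^{-1} x') = f(x')$). The middle term becomes $i\pi\, x'\cdot B^{-1}A\, x'$; the cross term becomes $-2\pi i\, x'\cdot B^{-1}(L^{-1})^T\xi = -2\pi i\, x'\cdot B^{-1}\,((L^{-1})^T\xi)$; and the first term $i\pi\,\xi\cdot L^{-1}DB^{-1}L\,\xi = i\pi\,((L^{-1})^T\xi)\cdot DB^{-1}\,((L^{-1})^T\xi)$ — here one uses that $DB^{-1}$ is symmetric, from (\ref{eqSymp3.2}), so that the quadratic form in $L^{-1}DB^{-1}L$ evaluated at $\xi$ equals the form in $DB^{-1}$ evaluated at $(L^{-1})^T\xi$. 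Combining, the whole integral collapses to $|\det L|^{-1} i^{-n/2}(\det B)^{-1/2}\int f(x') e^{i\pi((L^{-1})^T\xi\cdot DB^{-1}(L^{-1})^T\xi + x'\cdot B^{-1}Ax') - 2\pi i x'\cdot B^{-1}((L^{-1})^T\xi)}dx'$, which is exactly $|\det L|^{-1}(\mathcal{L}_S f)((L^{-1})^T\xi)$.

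Alternatively, and perhaps more cleanly, one can avoid the integral manipulation entirely by arguing at the metaplectic level: the dilation $f \mapsto f_L$, $f_L(x)=f(Lx)$, is (up to the normalizing constant $|\det L|^{1/2}$) the metaplectic operator $\widehat{M_L}$ projecting onto $M_L$, since $M_L = \mathrm{diag}(L^T, L^{-1})$ is the "symplectic squeezing" whose metaplectic lift acts by $x$-dilation. Then $\mathcal{L}_{S_L} = \widehat{S_L} = \widehat{M_L}\,\widehat{S}\,\widehat{M_L}^{\,-1}$ (using $S_L = M_L S M_L^T = M_L S M_L^{-1}$, valid because $M_L^T = M_L^{-1}$ is false in general — so one must instead use $M_L^T$ and recompute; this is the subtle point), and a short bookkeeping of the two dilation factors yields (\ref{eqHardy29}). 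I expect the main obstacle to be precisely this bookkeeping: getting the constant $1/|\det L|$ and the argument $(L^{-1})^T\xi$ correct requires care with whether $M_L^T$ or $M_L^{-1}$ appears and with the two factors of $|\det L|^{\pm 1/2}$ coming from the unitary normalization of the dilation operator on each side. The cleanest route is the direct change of variables above, where the symmetry of $DB^{-1}$ is the one genuine structural input, and everything else is substitution.
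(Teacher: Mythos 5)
Your main route---writing out the blocks of $S_L=M_LSM_L^T$, substituting into (\ref{eq1}), and changing variables $x=L^{-1}x'$---is exactly the paper's proof, and your bookkeeping of the prefactor ($\det B_L=\det B$, so the $1/|\det L|$ comes solely from the Jacobian) is correct. However, one step as written is wrong. From the blocks in your own display one gets
\[
D_LB_L^{-1}=L^{-1}D(L^{-1})^T\cdot L^TB^{-1}(L^{-1})^T=L^{-1}DB^{-1}(L^{-1})^T,
\]
not $L^{-1}DB^{-1}L$. With the correct block the chirp term is
\[
\xi\cdot L^{-1}DB^{-1}(L^{-1})^T\xi=\bigl((L^{-1})^T\xi\bigr)\cdot DB^{-1}\bigl((L^{-1})^T\xi\bigr)
\]
by a bare substitution in the quadratic form; no symmetry of $DB^{-1}$ is needed anywhere in the argument. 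As you wrote it, the appeal to symmetry of $DB^{-1}$ cannot rescue the identity $\xi\cdot L^{-1}DB^{-1}L\,\xi=((L^{-1})^T\xi)\cdot DB^{-1}((L^{-1})^T\xi)$, which is false for general $L$; so that sentence is a genuine error, though it evaporates once $D_LB_L^{-1}$ is computed correctly, and the rest of your change of variables (the $x\cdot B^{-1}Ax$ term and the cross term) is fine. Your ``alternative'' metaplectic argument is not viable as stated, as you yourself observe ($M_L^T\neq M_L^{-1}$ unless $L$ is orthogonal), so the direct computation is indeed the way to go---and it is precisely what the paper does.
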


\begin{proof}
If we use the decomposition (\ref{eqSymp3.1}) for $S$, we have:
\begin{equation}
S_L=\left(
\begin{array}{c c}
L^TAL & L^TB(L^{-1})^T\\
L^{-1}CL & L^{-1} D(L^{-1})^T
\end{array}
\right)~.
\label{eqHardy29}
\end{equation}
Consequently:
\begin{equation}
\begin{array}{c}
\left(\mathcal{L}_{S_L} f_L\right)(\xi)=\frac{1}{i^{n/2}\sqrt{\det B}} \int_{\mathbb{R}^n} f(Lx)
\exp \left[ i \pi \left(\xi \cdot L^{-1}DB^{-1}(L^{-1})^T \xi + x \cdot L^T B^{-1}AL x \right)\right.\\
\\
\left.-2 i \pi x \cdot L^T B^{-1} (L^T)^{-1} \xi \right] dx=\\
\\
= \frac{|\det L^{-1}|}{i^{n/2}\sqrt{\det B}} \int_{\mathbb{R}^n} f(y)
\exp \left[ i \pi \left((L^{-1})^T\xi \cdot DB^{-1}(L^{-1})^T \xi + y \cdot  B^{-1}Ay\right)\right.\\
\\
\left.-2 i \pi y \cdot  B^{-1} (L^T)^{-1} \xi \right] dy =\frac{1}{|\det L|} \left(\mathcal{L}_S f \right) \left((L^{-1})^T \xi \right)~, 
\end{array}
\label{eqHardy30}
\end{equation}
which proves the result.
\end{proof}

Before we proceed let us make the following observation. If $M,N$ are positive-definite $n \times n$ matrices, then the eigenvalues of $MN$ are real and positive, because $MN$ has the same eigenvalues as the symmetric and positive-definite matrices $M^{1/2}N M^{1/2}$ and $N^{1/2} M N^{1/2}$.  

\begin{lemma}\label{LemmaHardy3}
Let $M,N$ be positive-definite $n \times n$ matrices. There exists $L \in  \operatorname*{Gl}(n,\mathbb{R})$ such that
\begin{equation}
L^TML=L^{-1}N(L^{-1})^T= \Lambda~,
\label{eqHardy31}
\end{equation}
where $\Lambda= \text{diag}(\sqrt{\lambda_1}, \cdots, \sqrt{\lambda_n})$ is the diagonal matrix whose eigenvalues are the square roots of the eigenvalues $\lambda_1, \cdots, \lambda_n$ of $MN$.
\end{lemma}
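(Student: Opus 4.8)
The plan is to build $L$ explicitly from matrix square roots together with a single orthogonal diagonalization. First I would use the positive-definiteness of $M$ to form its symmetric positive-definite square root $M^{1/2}$, and then consider $M^{1/2} N M^{1/2}$, which is again symmetric and positive-definite. By the spectral theorem there is an orthogonal matrix $O$ (so $O^T O = I$) with $M^{1/2} N M^{1/2} = O \Lambda^2 O^T$, where $\Lambda^2 = \text{diag}(\lambda_1, \dots, \lambda_n)$ lists the (necessarily positive) eigenvalues. Since $M^{1/2} N M^{1/2} = M^{-1/2}(MN) M^{1/2}$ is similar to $MN$, these $\lambda_j$ are precisely the eigenvalues of $MN$ appearing in the statement -- and we already observed just before the lemma that they are real and positive -- so $\Lambda = \text{diag}(\sqrt{\lambda_1}, \dots, \sqrt{\lambda_n})$ is exactly the matrix required.

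Next I would set $L := M^{-1/2} O \Lambda^{1/2} \in \operatorname*{Gl}(n,\mathbb{R})$, which is invertible as a product of invertible matrices. Using $O^T O = I$ and the symmetry of $M^{\pm 1/2}$ and $\Lambda^{1/2}$, one computes $L^T M L = \Lambda^{1/2} O^T M^{-1/2} M M^{-1/2} O \Lambda^{1/2} = \Lambda^{1/2} O^T O \Lambda^{1/2} = \Lambda$. For the second identity, note that $L^{-1} = \Lambda^{-1/2} O^T M^{1/2}$ and $(L^{-1})^T = M^{1/2} O \Lambda^{-1/2}$, whence $L^{-1} N (L^{-1})^T = \Lambda^{-1/2} O^T (M^{1/2} N M^{1/2}) O \Lambda^{-1/2} = \Lambda^{-1/2} \Lambda^2 \Lambda^{-1/2} = \Lambda$, which finishes the argument.

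I do not expect a genuine obstacle here. The only step requiring a moment's care is identifying the eigenvalues obtained by diagonalizing $M^{1/2} N M^{1/2}$ with those of $MN$, and this is exactly the similarity remark recorded just before the lemma; everything else is routine manipulation of symmetric square roots. A minor bookkeeping point worth stating explicitly is that $\Lambda^{1/2}$ and $\Lambda^{-1/2}$ are well defined precisely because all $\lambda_j > 0$, which in turn is what makes $L$ invertible.
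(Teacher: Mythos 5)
Your construction is correct: $L=M^{-1/2}O\Lambda^{1/2}$ with $O$ orthogonally diagonalizing $M^{1/2}NM^{1/2}$ gives both identities by the routine computations you indicate, and the similarity $M^{1/2}NM^{1/2}=M^{-1/2}(MN)M^{1/2}$ correctly identifies the $\lambda_j$ with the eigenvalues of $MN$. The paper itself offers no argument here — it simply cites Lemma 1 of de Gosson--Luef — and your proof is the standard (and essentially the referenced) one, with the added merit of being self-contained.
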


\begin{proof}
See Lemma 1 of \cite{GoLu}.
\end{proof}

\begin{lemma}\label{LemmaHardy4}
Let $n>1$. For $1 \leq j \leq n$ let $f_j$ be a continuous complex-valued function of $(x_1, \cdots, \widetilde{x}_j, \cdots, x_n) \in \mathbb{R}^{n-1}$ (the tilde suppressing the term it covers), and $g_j$ a continuous complex-valued function of $x_j \in \mathbb{R}$. If
\begin{equation}
h=f_1 \otimes g_1= \cdots= f_n \otimes g_n~,
\label{eqHardy32}
\end{equation}
then there exists a constant $C \in \mathbb{C}$, such that
\begin{equation}
h=C \left(g_1 \otimes \cdots \otimes g_n \right)~.
\label{eqHardy33}
\end{equation}
Here $\left(f \otimes g\right)(y_1,y_2)= f(y_1) g(y_2)$ denotes the tensor product.
\end{lemma}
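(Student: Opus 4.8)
The plan is to prove the lemma by induction on $n$, reducing the $n$-dimensional claim to the $(n-1)$-dimensional one by freezing a single coordinate at a well-chosen value. The slightly delicate point is the bookkeeping of degenerate cases, so I would dispose of those first.

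If $h \equiv 0$ one may take $C = 0$, since $g_1 \otimes \cdots \otimes g_n$ then also vanishes identically; so assume $h \not\equiv 0$ and fix a point $a = (a_1, \dots, a_n)$ with $h(a) \neq 0$. Evaluating each identity $h = f_j \otimes g_j$ at $a$ gives $h(a) = f_j(a_1, \dots, \widetilde{a}_j, \dots, a_n)\, g_j(a_j)$, whence $g_j(a_j) \neq 0$ and $f_j(a_1, \dots, \widetilde{a}_j, \dots, a_n) \neq 0$ for every $j$. This is precisely what legitimizes all the divisions below.

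For the base case $n = 2$, from $h(x_1, x_2) = f_1(x_2) g_1(x_1) = f_2(x_1) g_2(x_2)$ I would freeze $x_1 = a_1$ in the first equality to express $f_1$ through $h(a_1, \cdot)$, freeze $x_2 = a_2$ in the second to express $f_2$ through $h(\cdot, a_2)$, and combine these with $h(a_1, x_2) = f_2(a_1) g_2(x_2)$ to obtain $h = \tfrac{h(a)}{g_1(a_1) g_2(a_2)}\, g_1 \otimes g_2$. For the inductive step ($n \geq 3$), set $\widetilde{h}(x_1, \dots, x_{n-1}) := h(x_1, \dots, x_{n-1}, a_n)$. The identity $h = f_n \otimes g_n$ gives $f_n = \widetilde{h}/g_n(a_n)$, hence $h(x) = \tfrac{g_n(x_n)}{g_n(a_n)}\, \widetilde{h}(x_1, \dots, x_{n-1})$; and for each $j = 1, \dots, n-1$, restricting $h = f_j \otimes g_j$ to $x_n = a_n$ shows $\widetilde{h} = \widetilde{f}_j \otimes g_j$ with $\widetilde{f}_j$ continuous on $\mathbb{R}^{n-2}$. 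Thus $\widetilde{h}$ satisfies the hypotheses of the lemma in dimension $n - 1$; the induction hypothesis furnishes a constant $\widetilde{C}$ with $\widetilde{h} = \widetilde{C}(g_1 \otimes \cdots \otimes g_{n-1})$, and substituting back yields $h = \tfrac{\widetilde{C}}{g_n(a_n)}(g_1 \otimes \cdots \otimes g_n)$.

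I do not anticipate a genuine obstacle. The only things requiring care are that every denominator occurring above is nonzero --- which is exactly what the choice of $a$ with $h(a) \neq 0$ secures --- and that the frozen functions $\widetilde{f}_j$ depend on precisely the right set of variables, so that the induction hypothesis applies verbatim. Continuity is used only to guarantee that $h$, $\widetilde{h}$, and the $\widetilde{f}_j$ are themselves continuous, so that the reduced problem is again an instance of the same statement.
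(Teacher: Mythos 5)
Your proof is correct. Note that the paper itself gives no argument for this lemma --- it simply cites Lemma~4 of \cite{GoLu} --- so your induction on $n$ (freezing $x_n=a_n$ at a point where $h(a)\neq 0$, extracting $f_n=\widetilde h/g_n(a_n)$, and observing that the restricted function $\widetilde h$ again satisfies the hypotheses in dimension $n-1$) supplies a complete, self-contained proof, and it is essentially the standard argument one would expect for this statement. One cosmetic remark: in the degenerate case $h\equiv 0$ your justification ``since $g_1\otimes\cdots\otimes g_n$ then also vanishes identically'' is not actually true (take all $f_j\equiv 0$ and all $g_j\equiv 1$), but it is also unnecessary --- the choice $C=0$ gives $h=0=C\,(g_1\otimes\cdots\otimes g_n)$ regardless of whether the product vanishes. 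Everything else, including the nonvanishing of every denominator via the choice of $a$ and the bookkeeping of which variables $\widetilde f_j$ depends on, is handled correctly.
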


\begin{proof}
See Lemma 4 of \cite{GoLu}.
\end{proof}

We can now prove the multidimensional Hardy uncertainty principle in the LCT space.

\begin{theorem}\label{TheoremMultidimensionalHardy}
Let $M,N$ be two real symmetric, positive-definite $n \times n$ matrices and $f \in L^2 (\mathbb{R}^n) \backslash \left\{0 \right\}$. Assume that
\begin{eqnarray}
|f(x)| \lesssim e^{-Mx^2} \label{eqHardy34A}\\
|\widehat{f}_S (\xi)| \lesssim e^{-N \xi^2} \label{eqHardy34B}
\end{eqnarray}
where 
\begin{equation}
S= \left(
\begin{array}{c c}
A & B\\
C & D
\end{array}
\right)  \in  \operatorname*{Sp}(2n,\mathbb{R})~
\label{eqHardy35}
\end{equation}
is a free symplectic matrix.

Let $\lambda_1, \cdots, \lambda_n$ denote the eigenvalues of $MB^TNB$.

\begin{enumerate}
\item[(a)] The eigenvalues of $MB^TNB$ satisfy: $\lambda_j \leq \pi^2$, $j=1, \cdots, n$.

\item[(b)] If $\lambda_1= \cdots= \lambda_n=\pi^2$, then there exists $K \in \mathbb{C}$ such that
\begin{equation}
f(x)=K e^{-x \cdot\left(M+i \pi B^{-1}A \right)x} ~.
\label{eqHardy36}
\end{equation}
\end{enumerate}
\end{theorem}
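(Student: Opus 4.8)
The plan is to strip the chirps off the linear canonical transform so as to reduce the statement to a Hardy-type problem for the ordinary Fourier transform, then to diagonalise the two Gaussian envelopes simultaneously, and finally to slice the $n$-dimensional problem into one-dimensional Hardy estimates along the coordinate axes, following the method of \cite{GoLu}. Concretely, by (\ref{eqExtraLCT1})--(\ref{eqExtraLCT2}) we have $\widehat{f}_S(\xi)=|\det B|^{-1/2}\gamma_1(\xi)\,\mathcal F[\gamma_2 f](B^{-1}\xi)$, where $\gamma_1,\gamma_2$ are unimodular because $DB^{-1}$ and $B^{-1}A$ are symmetric (by (\ref{eqSymp3.2})). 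Setting $g:=\gamma_2 f\in L^2(\mathbb R^n)\setminus\{0\}$ we get $|g(x)|=|f(x)|\lesssim e^{-Mx^2}$ and, after the substitution $\xi=B\eta$, $|\widehat g(\eta)|=|\det B|^{1/2}|\widehat f_S(B\eta)|\lesssim e^{-\eta\cdot B^TNB\,\eta}$. Since $\widehat f_S\in L^1(\mathbb R^n)$ (it is dominated by $e^{-N\xi^2}$), both $g$ and $\widehat g$ are continuous. Writing $N':=B^TNB$ (real, symmetric, positive-definite), the matrix $MN'=MB^TNB$ has exactly the eigenvalues $\lambda_1,\dots,\lambda_n$, all strictly positive.

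Next I would diagonalise and slice. Lemma \ref{LemmaHardy3} applied to $M$ and $N'$ produces $L\in\operatorname*{Gl}(n,\mathbb R)$ with $L^TML=L^{-1}N'(L^{-1})^T=\Lambda:=\text{diag}(\sqrt{\lambda_1},\dots,\sqrt{\lambda_n})$. Put $g_L(x):=g(Lx)$; by Lemma \ref{LemmaHardy2} applied with $S=J$, equivalently the dilation rule $\widehat{g_L}(\eta)=|\det L|^{-1}\widehat g((L^{-1})^T\eta)$, one obtains $|g_L(x)|\lesssim\prod_{j=1}^n e^{-\sqrt{\lambda_j}x_j^2}$ and $|\widehat{g_L}(\eta)|\lesssim\prod_{j=1}^n e^{-\sqrt{\lambda_j}\eta_j^2}$. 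Now fix an index $m$ and, for $\eta'$ ranging over the $n-1$ variables other than the $m$-th, let $\psi_{\eta'}$ be the partial Fourier transform of $g_L$ in all variables but $x_m$, evaluated at $\eta'$ and regarded as a function of $t:=x_m$. Its one-dimensional Fourier transform in $t$ is the full $\widehat{g_L}$ with the $m$-th argument free and the others set equal to $\eta'$, so $\psi_{\eta'}$ and its transform obey one-dimensional Gaussian bounds both with parameter $\sqrt{\lambda_m}$.

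Now I would run the one-dimensional Hardy theorem on each slice. If $\lambda_m>\pi^2$, then for every $\eta'$ the one-dimensional Hardy uncertainty principle (the Fourier case, i.e. Theorem \ref{TheoremHardyLCT2} with $S=J$) forces $\psi_{\eta'}\equiv 0$; injectivity of the Fourier transform in the remaining variables then gives $g_L\equiv 0$, contradicting $g\neq 0$. This proves (a). If $\lambda_1=\dots=\lambda_n=\pi^2$, so $\Lambda=\pi I_n$, the equality case of one-dimensional Hardy gives $\psi_{\eta'}(t)=c(\eta')e^{-\pi t^2}$; taking the inverse Fourier transform in $\eta'$ yields $g_L(x)=e^{-\pi x_m^2}h_m$ with $h_m$ a continuous function of the remaining variables, and this holds for every $m$. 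Lemma \ref{LemmaHardy4} then forces $g_L(x)=K e^{-\pi|x|^2}$ for some $K\in\mathbb C$. Unwinding, $g(x)=g_L(L^{-1}x)=K e^{-\pi x\cdot(LL^T)^{-1}x}=K e^{-Mx^2}$, since $L^TML=\pi I_n$ gives $M=\pi(LL^T)^{-1}$; hence $f(x)=\overline{\gamma_2(x)}\,g(x)=K e^{-x\cdot(M+i\pi B^{-1}A)x}$, which is (\ref{eqHardy36}).

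\textbf{Main obstacle.} The delicate point is the slicing in part (a): an individual slice $\psi_{\eta'}$ may be identically zero even when $\lambda_m>\pi^2$, so one cannot conclude from one slice. The remedy is to apply one-dimensional Hardy over \emph{all} transverse frequencies $\eta'$ simultaneously and use injectivity of the Fourier transform to propagate the vanishing to $g_L$ itself. One must also keep track of enough regularity ($g,\widehat g\in L^1$, hence continuous) so that the pointwise factorisation in part (b) and the continuity hypotheses of Lemma \ref{LemmaHardy4} are legitimate.
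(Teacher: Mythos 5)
Your proof is correct and follows the same overall architecture as the paper's: remove the chirps, simultaneously diagonalise the two Gaussian envelopes with Lemma \ref{LemmaHardy3}, run the one-dimensional Hardy theorem (Theorem \ref{TheoremHardyLCT2}) on coordinate slices, and assemble part (b) with Lemma \ref{LemmaHardy4}, finally restoring the chirp to get the factor $e^{-i\pi x\cdot B^{-1}Ax}$. Two points of genuine divergence are worth noting. First, you reduce everything to the ordinary Fourier transform via (\ref{eqExtraLCT1}) and only ever invoke the one-dimensional theorem with $S=J$, whereas the paper carries residual free symplectic matrices ($S'$, $S_B'$, $S_{L,B}'$ and the $2\times 2$ matrix $S^{\prime(1)}$ of (\ref{eqHardy50})) with a nontrivial lower-right block; since the discarded chirps are unimodular this is only a cosmetic simplification, but it does streamline the estimates. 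Second, and more substantively, the slicing: the paper fixes a transverse point $x'$ in \emph{physical} space with $g_L(\cdot,x')\not\equiv 0$ and recovers the one-dimensional transform of that slice by integrating the $n$-dimensional transform against suitable phases over the transverse frequency variables (eq. (\ref{eqHardy49})), so a single nonvanishing slice settles part (a); you instead fix the transverse \emph{frequencies} $\eta'$, apply one-dimensional Hardy to every partial Fourier transform $\psi_{\eta'}$, and use injectivity of the Fourier transform in the remaining variables to propagate the vanishing back to $g_L$. You correctly identified this as the delicate point, and your resolution is sound; the paper's choice avoids quantifying over all slices at the price of justifying the interchange of integrals in (\ref{eqHardy49}). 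The regularity bookkeeping you mention ($g,\widehat g\in L^1$, hence continuous representatives, so that the pointwise factorisation and the continuity hypotheses of Lemma \ref{LemmaHardy4} are legitimate) matches what the paper does.
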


\begin{proof}
Notice that from(\ref{eqHardy34A},\ref{eqHardy34B}), we have $f,\widehat{f}_S \in L^p (\mathbb{R}^n)$ for all $p \in \left[1, \infty\right]$, and we may safely apply Fubini's theorem in all integrations that follow.

\begin{enumerate}
\item[(a)] Let $L \in \operatorname*{Gl}(n,\mathbb{R})$ be the matrix such that (see Lemma \ref{LemmaHardy3}) such that:
\begin{equation}
L^TML=L^{-1}B^TNB (L^{-1})^T= \Lambda ~,
\label{eqHardy37}
\end{equation}
where $\Lambda= \text{diag}(\sqrt{\lambda_1},\cdots, \sqrt{\lambda_n})$, and $\lambda_1, \cdots, \lambda_n$ are the eigenvalues of $MB^TNB$, which we assume, without loss of generality, to be ordered decreasingly: $\lambda_1 \geq \lambda_2 \geq \cdots \geq \lambda_n$. It then suffices to prove the result for $\lambda_1$.

In the sequel we shall consider the function $g(x)$ defined by:
\begin{equation}
f(x)= g(x)e^{-i \pi x \cdot B^{-1}Ax}~.
\label{eqfunctiong1}
\end{equation}
From (\ref{eqHardy34A}) we have:
\begin{equation}
|g(x)|=|f(x)| \lesssim e^{-Mx^2}~.
\label{eqfunctiong2}
\end{equation}
On the other hand:
\begin{equation}
\begin{array}{c}
\widehat{f}_S(\xi)= \frac{1}{i^{n/2}\sqrt{\det (B)}} \int_{\mathbb{R}^n} f(x)  e^{i \pi \xi \cdot DB^{-1} \xi +i \pi x \cdot B^{-1}A x - 2 i \pi x \cdot B^{-1} \xi } dx=\\
\\
= \frac{1}{i^{n/2}\sqrt{\det (B)}} \int_{\mathbb{R}^n} g(x)  e^{i \pi \xi \cdot DB^{-1} \xi - 2 i \pi x \cdot B^{-1} \xi } dx= \widehat{g}_{S^{\prime}}(\xi)~,
\end{array}
\label{eqfunctiong3}
\end{equation}
where
\begin{equation}
S^{\prime}=\left(
\begin{array}{c c}
0 & B\\
-(B^{-1})^T & D
\end{array}
\right)~.
\label{eqfunctiong4}
\end{equation}
Since, by assumption, $S \in \operatorname*{Sp}(2n,\mathbb{R})$, we have that $B^TD$ is symmetric (cf.(\ref{eqSymp3.2})), and therefore so is $DB^{-1}$. Consequently, $S^{\prime}$ is also a free symplectic matrix.

It follows from (\ref{eqHardy34B},\ref{eqfunctiong3}) that:
\begin{equation}
|\widehat{g}_{S^{\prime}} (\xi)|
=|\widehat{f}_S(\xi)| \lesssim e^{-N \xi^2}~.
\label{eqfunctiong5}
\end{equation}
As before, let $g_L(x)=g(Lx)$. We thus have from (\ref{eqHardy37},\ref{eqfunctiong2}):
\begin{equation}
|g_L(x)|\lesssim e^{-x \cdot L^T MLx}= e^{- \Lambda x^2}~.
\label{eqHardy38}
\end{equation}
Next we make the following remark. Let
\begin{equation}
S_B^{\prime}= \left(
\begin{array}{c c}
B^{-1} &0\\
0 & B^T
\end{array}
\right) \left(
\begin{array}{c c}
0 & B\\
-(B^{-1})^T & D
\end{array}
\right)=\left(
\begin{array}{c c}
0 & I_n\\
-I_n & B^TD
\end{array}
\right) ~.
\label{eqHardy39}
\end{equation}
Then $S_B^{\prime}$ is also a free symplectic matrix. We thus have:
\begin{equation}
\begin{array}{c}
\widehat{g}_{S^{\prime}} (B \xi)= \frac{1}{i^{n/2} \sqrt{\det B}} \int_{\mathbb{R}^n} g(x) 
e^{i \pi \xi \cdot B^T D \xi-2i \pi x \cdot \xi } dx= \\
\\
= \frac{1}{\sqrt{\det B}}\widehat{g}_{S_B^{\prime}} (\xi)~. 
\end{array}
\label{eqHardy40}
\end{equation}
Thus:
\begin{equation}
|\widehat{g}_{S_B^{\prime}} (\xi)|= \left|\sqrt{\det B} \widehat{g}_{S^{\prime}} (B \xi) \right| \lesssim e^{- \xi \cdot B^TNB \xi} = e^{-N_B \xi^2}~,
\label{eqHardy41}
\end{equation}
where
\begin{equation}
N_B=B^TNB~.
\label{eqHardy42}
\end{equation}
Let $S_L$ be as in Lemma \ref{LemmaHardy2}:
\begin{equation}
S_L^{\prime}=M_LS^{\prime}M_L^T=\left(
\begin{array}{c c}
0 & L^T B(L^{-1})^T\\
-L^{-1}(B^{-1})^T L & L^{-1}D(L^{-1})^T
\end{array}
\right)~.
\label{eqHardy43}
\end{equation}
Define also:
\begin{equation}
\begin{array}{c}
S_{L,B}^{\prime}=\left(
\begin{array}{c c}
L^TB^{-1}(L^{-1})^T & 0\\
0 & L^{-1} B^T L
\end{array}
\right)~ S_L^{\prime}= \\
\\
=\left(
\begin{array}{c c}
0 & I_n\\
-I_n & L^{-1} B^T D (L^{-1})^T
\end{array}
\right)~.
\end{array}
\label{eqHardy44}
\end{equation}
Then:
\begin{equation}
\begin{array}{c}
\left(\mathcal{L}_{S_{L,B}^{\prime}} g_L \right) (\xi)= \frac{1}{i^{n/2}} \int_{\mathbb{R}^n} g(Lx) e^{i \pi \xi \cdot L^{-1}B^T D (L^{-1})^T \xi - 2 i \pi x \cdot \xi } dx=\\
\\
=  \frac{1}{i^{n/2}|\det L|} \int_{\mathbb{R}^n} g(y) e^{ i \pi \left((L^{-1})^T\xi\right) \cdot B^T D (L^{-1})^T \xi - 2 i \pi y \cdot \left((L^{-1})^T \xi \right)} dy\\
\\
\Leftrightarrow \left(\mathcal{L}_{S_{L,B}^{\prime}} g_L \right) (\xi)= \frac{1}{|\det L|} \widehat{g}_{S_B^{\prime}} \left((L^{-1})^T \xi \right)~.
\end{array}
\label{eqHardy45}
\end{equation}
From (\ref{eqHardy37},\ref{eqHardy41},\ref{eqHardy45}):
\begin{equation}
\begin{array}{c}
\left|\left(\mathcal{L}_{S_{L,B}^{\prime}}g_L \right) (\xi) \right| = 
\frac{1}{|\det L|} \left|\widehat{g}_{S_B^{\prime}} \left((L^{-1})^T \xi \right) \right| \lesssim e^{-\xi \cdot L^{-1}N_B (L^{-1})^T \xi} \\
\\
\Leftrightarrow \left|\left(\mathcal{L}_{S_{L,B}^{\prime}}g_L \right) (\xi) \right|  \lesssim e^{- \Lambda \xi^2}~.
\end{array}
\label{eqHardy46}
\end{equation}
Now consider the function:
\begin{equation}
g_{L}^{\prime}(x_1)= g_L(x_1,x^{\prime})~,
\label{eqHardy47}
\end{equation}
for fixed $x^{\prime}=(x_2,\cdots,x_n) \in \mathbb{R}^{n-1}$. Since, by assumption, $g(x)$ is not identically zero, there must exist $x^{\prime} \in \mathbb{R}^{n-1}$, such that $g_L^{\prime}(x_1)$ is not identically zero. In the sequel, we shall assume that we have chosen such a $x^{\prime}$.

We have from (\ref{eqHardy38}):
\begin{equation}
|g_{L}^{\prime} (x_1)|  \leq c_1(x^{\prime}) e^{- \sqrt{\lambda_1} x_1^2}~,
\label{eqHardy48}
\end{equation}
where $c_1(x^{\prime})$ is a function of $x^{\prime}$ only.

On the other hand (cf. first line of (\ref{eqHardy45})):
\begin{equation}
\begin{array}{c}
\int_{\mathbb{R}}\cdots \int_{\mathbb{R}}  \left(\mathcal{L}_{S_{L,B}^{\prime}}g_L \right) (\xi) \exp \left[ - i \pi \xi \cdot L^{-1}B^TD(L^{-1})^T \xi+ \right.\\
\\
\left. +i \pi  \left(L^{-1}B^TD(L^{-1})^T\right)_{1,1}\xi_1^2 +2 i \pi \sum_{j=2}^n \xi_j x_j \right] d \xi_2 \cdots d \xi_n=\\
\\
=i^{-n/2} \int_{\mathbb{R}^n} \int_{\mathbb{R}}\cdots  \int_{\mathbb{R}}  g_L(y) \exp \left[  i \pi \left(L^{-1}B^TD(L^{-1})^T \right)_{1,1} \xi_1^2 + \right.\\
\\
\left. + 2 i \pi \sum_{j=2}^n \xi_j (x_j -y_j) -2 i \pi y_1  \xi_1 \right] d \xi_2 \cdots d \xi_n dy=\\
\\
=i^{-n/2} \int_{\mathbb{R}}   g_{L} (x_1,x^{\prime}) e^{ i \pi \left(L^{-1}B^TD(L^{-1})^T \right)_{1,1} \xi_1^2  
 -2 i \pi x_1  \xi_1 } d x_1= \\
 \\
 =\frac{1}{i^{(n-1)/2}} \left(\mathcal{L}_{S^{\prime (1)}}^{(1)} g_{L}^{\prime}\right) (\xi_1)~.
\end{array}
\label{eqHardy49}
\end{equation}
Here $\mathcal{L}_{S^{\prime (1)}}^{(1)}$ denotes the one-dimensional LCT with respect to the $2 \times 2$ free symplectic matrix:
\begin{equation}
S^{\prime (1)}= \left(
\begin{array}{c c}
0 & 1\\
-1 & \left(L^{-1}B^TD (L^{-1})^T \right)_{1,1}
\end{array}
\right)~.
\label{eqHardy50}
\end{equation}
It follows that:
\begin{equation}
\left|\left(\mathcal{L}_{S^{\prime (1)}}^{(1)} g_{L}^{\prime} \right) (\xi_1)\right| \lesssim \int_{\mathbb{R}} \cdots \int_{\mathbb{R}} \left|\left(\mathcal{L}_{S_{L,B}^{\prime}}g_L \right) (\xi)      \right| d\xi_2 \cdots d \xi_n\lesssim e^{- \sqrt{\lambda_1} \xi_1^2} ~.
\label{eqHardy51}
\end{equation}
Since $g_L^{\prime}$ is not identically zero, we conclude from (\ref{eqHardy48},\ref{eqHardy51}) and Theorem \ref{TheoremHardyLCT2} that:
\begin{equation}
\lambda_1 \leq \pi^2~.
\label{eqHardy52}
\end{equation}

\item[(b)] Now suppose that $\lambda_1=\lambda_2 = \cdots=\lambda_n=\pi^2$. We thus have from (\ref{eqHardy38},\ref{eqHardy46}):
\begin{eqnarray}
|g_L(x)| \lesssim e^{- \pi x^2} \hspace{1.4 cm} \label{eqHardy53A}\\
\left| \left(\mathcal{L}_{S_{L,B}^{\prime}} g_L\right) (\xi) \right| \lesssim e^{- \pi \xi^2}\label{eqHardy53B}
\end{eqnarray}
From (\ref{eqHardy48},\ref{eqHardy51}):
\begin{eqnarray}
|g_L^{\prime}(x_1)| \leq c_1(x^{\prime}) e^{- \pi x_1^2} \hspace{0.4 cm} \label{eqHardy54A}\\
\left| \left(\mathcal{L}_{S^{\prime(1)}}^{(1)} g_L^{\prime}\right) (\xi_1) \right| \lesssim e^{- \pi \xi_1^2}\label{eqHardy54B}
\end{eqnarray}
From (\ref{eqHardy54A},\ref{eqHardy54B}) and Hardy's Theorem (theorem \ref{TheoremHardyLCT2}):
\begin{equation}
g_L^{\prime} (x_1)= g_1(x^{\prime}) e^{ - \pi x_1^2 }~,
\label{eqHardy58}
\end{equation}
for some function $g_1: \mathbb{R}^{n-1} \to \mathbb{C}$ of $x^{\prime}$ only.

Applying the same argument to the remaining variables $x_2, \cdots, x_n$, we conclude that there exist functions $g_j$, $j=1, \cdots, n$, such that:
\begin{equation}
g_L(x)= g_j (x_1, \cdots, \widetilde{x_j}, \cdots, x_n) e^{ - \pi x_j^2 }~,
\label{eqHardy59}
\end{equation}
for all $j=1, \cdots, n$.

From Lemma \ref{LemmaHardy4}, there exists $K \in \mathbb{C}$ such that:
\begin{equation}
g(Lx)= K e^{- \pi x^2}~.
\label{eqHardy60}
\end{equation}
Since $\Lambda= \pi I_n \Leftrightarrow L^TML= \pi I_n \Leftrightarrow M=\pi (L^{-1})^T L^{-1}$, we conclude that:
\begin{equation}
g(x)=K e^{-Mx^2}~.
\label{eqHardy61}
\end{equation}
But from (\ref{eqfunctiong1}) it follows that:
\begin{equation}
f(x)=e^{-x \cdot(M+i \pi B~{-1}A)x}~,
\label{eqHardy60A}
\end{equation}
which concludes the proof.
\end{enumerate}
\end{proof}

We may generalize the previous result for arbitrary LCT's $\mathcal{L}_{S_1}$, $\mathcal{L}_{S_2}$ according to the next corollary.

Let 
\begin{equation}
S^{(j)}=\left(
\begin{array}{c c}
A_j & B_j\\
C_j & D_j
\end{array}
\right) \in \operatorname*{Sp}(2n,\mathbb{R})~,
\label{eqHardy62}
\end{equation}
for $j=1,2$, such that:
\begin{equation}
\det\left(B_2 A_1^T-A_2B_1^T \right) \neq 0~.
\label{eqHardy62A}
\end{equation}

The inverse of $S_1$ is given by:
\begin{equation}
S_1^{-1}=JS_1^TJ^T=\left(
\begin{array}{c c}
D_1^T & -B_1^T\\
-C_1^T & A_1^T
\end{array}
\right) \in \operatorname*{Sp}(2n,\mathbb{R})~.
\label{eqHardy63}
\end{equation}
We also define:
\begin{equation}
S=S_2S_1^{-1}=\left(
\begin{array}{c c}
A & B\\
C & D
\end{array}
\right) \in \operatorname*{Sp}(2n,\mathbb{R})~,
\label{eqHardy64}
\end{equation}
where:
\begin{equation}
\left\{
\begin{array}{l l}
A=A_2D_1^T-B_2C_1^T~,~ & B=B_2A_1^T-A_2B_1^T\\
& \\
C=C_2D_1^T-D_2C_1^T~,~ & D= D_2A_1^T-C_2B_1^T
\end{array}
\right.
\label{eqHardy65}
\end{equation}
In view of (\ref{eqHardy62A}), we conclude that $S$ is a free symplectic matrix.

\begin{corollary}\label{CorollaryMultidimensionalHardyLCT}
Let $M,N$ be two real, symmetric, positive-definite $n \times n$ matrices and a function $f : \mathbb{R}^n \to \mathbb{C}$, such that $f \neq 0$ and:
\begin{eqnarray}
|(\widehat{S^{(1)}}f) (x)| \lesssim e^{-M x^2} \label{eqHardy65A}\\
|(\widehat{S^{(2)}}f) (\xi)| \lesssim e^{-N \xi^2} \label{eqHardy65B}
\end{eqnarray}
where $S^{(1)},S^{(2)},S \in \operatorname*{Sp}(2n,\mathbb{R})$ are as above. Moreover, we denote by $\lambda_1, \cdots, \lambda_n$ the eigenvalues of $MB^TNB$.

\begin{enumerate}
\item We must have: $\lambda_j \leq \pi^2$, for all $j=1, \cdots, n$.

\item If $\lambda_1 = \lambda_2=\cdots= \lambda_n= \pi^2$, then there exists a constant $K \in \mathbb{C}$, such that:
\begin{equation}
(\widehat{S^{(1)}}f) (x)= 
Ke^{-x \cdot \left(M+i \pi B^{-1}A\right) x}~.
\label{eqHardy66}
\end{equation}
\end{enumerate}
\end{corollary}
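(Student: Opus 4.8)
The plan is to reduce the statement directly to Theorem \ref{TheoremMultidimensionalHardy}, in exactly the same manner in which Corollary \ref{CorollaryHardy1} was obtained from Theorem \ref{TheoremHardyLCT2} in one dimension. First I would set $g:=\widehat{S^{(1)}}f$. The decay hypothesis (\ref{eqHardy65A}), with $M$ positive-definite, shows that $g\in L^2(\mathbb{R}^n)$ (indeed in every $L^p$); since metaplectic operators are unitary, $\widehat{S^{(1)}}$ is injective, so $g\neq 0$ and $f=(\widehat{S^{(1)}})^{-1}g$. This is the only point where the hypothesis "$f\neq 0$" is used.

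Next I would exploit the group law in $\operatorname*{Mp}(2n,\mathbb{R})$ together with (\ref{swmextra}). With $S=S^{(2)}(S^{(1)})^{-1}$ decomposed into blocks $A,B,C,D$ as in (\ref{eqHardy64})--(\ref{eqHardy65}), one has
\[
\widehat{S}\,g=\widehat{S^{(2)}}(\widehat{S^{(1)}})^{-1}\widehat{S^{(1)}}f=\widehat{S^{(2)}}f,
\]
up to the overall sign which, following the convention fixed after (\ref{eqMetaplectic2}), is irrelevant. Hypothesis (\ref{eqHardy62A}) makes the upper-right block $B=B_2A_1^{T}-A_2B_1^{T}$ invertible, so $S$ is a free symplectic matrix and hence $\widehat{S}\,g$ coincides (up to sign) with the LCT $\mathcal{L}_S g=\widehat{g}_S$. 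Consequently (\ref{eqHardy65A}) and (\ref{eqHardy65B}) become precisely $|g(x)|\lesssim e^{-Mx^{2}}$ and $|\widehat{g}_S(\xi)|\lesssim e^{-N\xi^{2}}$, which are the hypotheses of Theorem \ref{TheoremMultidimensionalHardy} for the pair $(g,S)$.

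It then suffices to invoke Theorem \ref{TheoremMultidimensionalHardy}. Part (a) gives $\lambda_j\le \pi^{2}$ for the eigenvalues $\lambda_j$ of $MB^{T}NB$, which is assertion 1. If $\lambda_1=\cdots=\lambda_n=\pi^{2}$, part (b) yields $K\in\mathbb{C}$ with $g(x)=Ke^{-x\cdot(M+i\pi B^{-1}A)x}$, and since $g=\widehat{S^{(1)}}f$ this is assertion 2.

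The argument is essentially bookkeeping, so the only place needing attention — and the closest thing to an obstacle — is checking that the metaplectic lift of $S^{(2)}(S^{(1)})^{-1}$ agrees, up to sign, with the integral operator $\mathcal{L}_S$ of the free matrix $S$ (so that Theorem \ref{TheoremMultidimensionalHardy}, which is stated for $\mathcal{L}_S$, genuinely applies), and verifying that the sign ambiguity is harmless: it drops out of the modulus bounds used for assertion 1 and is absorbed into the constant $K$ in assertion 2. Everything else follows at once from (\ref{swmextra}), the unitarity of metaplectic operators, and Theorem \ref{TheoremMultidimensionalHardy}.
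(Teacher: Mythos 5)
Your proposal is correct and follows essentially the same route as the paper: the paper's proof likewise sets $g=\widehat{S^{(1)}}f$, observes that $\widehat{g}_S=(\widehat{S}g)=\widehat{S^{(2)}}f$ with $S=S^{(2)}(S^{(1)})^{-1}$ free by (\ref{eqHardy62A}), and invokes Theorem \ref{TheoremMultidimensionalHardy}. Your additional remarks on the injectivity of $\widehat{S^{(1)}}$ and the harmlessness of the metaplectic sign ambiguity are correct bookkeeping that the paper leaves implicit.
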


\begin{proof}
The result is an immediate consequence of the previous Theorem if we consider the function $g(x)=(\widehat{S^{(1)}}f) (x)$ and its LCT $\widehat{g}_S(\xi)= (\widehat{S}g)(\xi)= \widehat{f}_{S_2} (\xi)$.
\end{proof}

\begin{remark}\label{RemarkHardyWignerfunction}
Hardy's uncertainty principle can be formulated in terms of Wigner functions (as done in \cite{GoLu}). This formulation is actually stronger than Hardy's uncertainty principle. By the simple linear transformation $D^{(1,2)}$ (see (\ref{eq35})) we can obtain a similar formulation for the bilinear distribution $W_{\vartheta} f$. The result is expressed in terms of the $\vartheta$-Williamson invariants (see \cite{Dias5}).
\end{remark}

\section{Paley-Wiener Theorem for the LCT}

In this section, we state and prove the Paley-Wiener Theorem \cite{Wiener} for the LCT.

\begin{theorem}\label{TheoremPaleyWiener1}
Let $S \in \operatorname*{Sp}(2n,\mathbb{R})$ be of the form (\ref{eqSymp3.1}) with $\det B \neq 0$.

An entire function $g(z)$ on $\mathbb{C}^n$ is the LCT $g= \widehat{f}_{S}$ of a function $f$ with support in the ball $B_R=\left\{x \in \mathbb{R}^n: ~|x| \leq R \right\}$ if and only if, for each $N \in \mathbb{N}$, there is a constant $C_N >0$, such that:
\begin{equation}
|g(z)| \leq C_N \frac{e^{- 2\pi \xi \cdot \left(DB^{-1}\right)_+\eta+ 2 \pi R |B^{-1} \eta|}}{\left(1+ |z|\right)^N}~, 
\label{eqPaleyWiener1}
\end{equation}
for all $z= \xi + i \eta \in \mathbb{C}^n$.
\end{theorem}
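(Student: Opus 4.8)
The plan is to reduce the statement to the classical Paley--Wiener--Schwartz theorem for the ordinary Fourier transform, using the factorisation (\ref{eqExtraLCT1},\ref{eqExtraLCT2}) of the LCT into two chirp multiplications and a dilation of a Fourier transform. Write $g=\widehat f_S$, so that $g(\xi)=|\det B|^{-1/2}\gamma_1(\xi)\,\mathcal F[\gamma_2 f](B^{-1}\xi)$ with $\gamma_1(\xi)=e^{i\pi\xi\cdot DB^{-1}\xi}$ and $\gamma_2(x)=e^{i\pi x\cdot B^{-1}Ax}$; recall that $DB^{-1}$ and $B^{-1}A$ are symmetric, which is precisely what the symplectic relations (\ref{eqSymp3.2}) assert and what makes the generating function (\ref{eqSymp3.3}) well defined. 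The chirp $\gamma_1$ is the restriction to $\mathbb R^n$ of the entire, nowhere vanishing function $z\mapsto e^{i\pi z\cdot DB^{-1}z}$ on $\mathbb C^n$, and writing $z=\xi+i\eta$ one computes $|\gamma_1(\xi+i\eta)|=e^{-2\pi\xi\cdot(DB^{-1})_+\eta}$, which is exactly the first exponential factor in (\ref{eqPaleyWiener1}). Since $B$ is invertible, $z\mapsto B^{-1}z$ is a linear automorphism of $\mathbb C^n$, so $g$ extends to an entire function on $\mathbb C^n$ if and only if $h:=\mathcal F[\gamma_2 f]$ does.

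Next I would translate the bound (\ref{eqPaleyWiener1}) into a bound on $h$. Dividing by $|\gamma_1(z)|$ and setting $w=B^{-1}z=:u+iv$ (so $u=B^{-1}\xi$, $v=B^{-1}\eta$, and $|v|=|\mathrm{Im}\,w|=|B^{-1}\eta|$), the estimate (\ref{eqPaleyWiener1}) becomes equivalent to: for every $N\in\mathbb N$ there is $C_N'>0$ with $|h(w)|\le C_N'(1+|w|)^{-N}e^{2\pi R|\mathrm{Im}\,w|}$ for all $w\in\mathbb C^n$. The point to verify is that the polynomial weight transforms correctly, which follows from the two-sided comparison $\|B^{-1}\|^{-1}|w|\le|z|\le\|B\|\,|w|$, giving $(1+|z|)\asymp(1+|w|)$ with constants depending only on $B$; these, together with the harmless factor $|\det B|^{-1/2}$, are absorbed into $C_N'$, while the exponential rate $2\pi R$ is unchanged. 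By the classical Paley--Wiener--Schwartz theorem (with the Fourier convention $\widehat\phi(\xi)=\int\phi(x)e^{-2\pi i x\cdot\xi}\,dx$ of (\ref{eqFourier1})), an entire function on $\mathbb C^n$ satisfies this family of bounds if and only if it equals $\widehat\phi$ for some $\phi\in C_c^\infty(\mathbb R^n)$ with $\mathrm{supp}\,\phi\subseteq B_R$. Applying this with $\phi=\gamma_2 f$ shows that $g$ is entire and satisfies (\ref{eqPaleyWiener1}) if and only if $\gamma_2 f\in C_c^\infty$ with support in $B_R$.

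Finally, since $\gamma_2$ and its reciprocal $\gamma_2^{-1}(x)=e^{-i\pi x\cdot B^{-1}Ax}$ are smooth and nowhere vanishing on $\mathbb R^n$, multiplication by either preserves $C_c^\infty(\mathbb R^n)$ and does not alter supports, so $\gamma_2 f\in C_c^\infty$ supported in $B_R$ is equivalent to $f\in C_c^\infty$ supported in $B_R$ (in particular the rapid decay demanded in (\ref{eqPaleyWiener1}) already forces $f$ to be smooth, so there is no loss in the phrasing ``a function $f$''). For the ``if'' direction one sets $h(w):=|\det B|^{1/2}g(Bw)/\gamma_1(Bw)$, extracts $\phi\in C_c^\infty(B_R)$ with $\widehat\phi=h$ from Paley--Wiener--Schwartz, puts $f:=\gamma_2^{-1}\phi$, and checks $\widehat f_S=g$ from (\ref{eqExtraLCT1}) (the two sides being entire and agreeing on $\mathbb R^n$). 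I expect no genuine obstacle beyond this bookkeeping: the substance is simply that the chirp factors are entire and nowhere vanishing, hence they pass through both the analyticity and the estimate without changing the exponential type, and that the dilation by $B$ only perturbs the constants $C_N$, not the radius $R$.
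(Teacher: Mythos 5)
Your proof is correct, but it follows a genuinely different route from the paper. You reduce everything to the classical Paley--Wiener(--Schwartz) theorem by exploiting the factorisation (\ref{eqExtraLCT1}): the chirp $\gamma_1$ is entire and nowhere vanishing with $|\gamma_1(\xi+i\eta)|=e^{-2\pi\xi\cdot(DB^{-1})_+\eta}$ (which accounts exactly for the first exponential in (\ref{eqPaleyWiener1})), the dilation $w=B^{-1}z$ converts $|B^{-1}\eta|$ into $|\operatorname{Im}w|$ and only perturbs the polynomial weights by constants, and the chirp $\gamma_2$ is a smooth nowhere-vanishing multiplier that preserves $C_c^\infty(B_R)$. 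The paper instead gives a direct, self-contained proof that essentially re-runs the classical Paley--Wiener machinery in the LCT setting: for the forward direction it introduces ``covariant derivatives'' $\nabla_x,\widetilde\nabla_x$ adapted to the chirp $\gamma_2$ and integrates by parts to produce the factors $(2\pi i z_j)^{\alpha_j}$; for the converse it shows via Cauchy's integral formula that $h_\eta(\xi)=g(z)e^{-i\pi z\cdot(DB^{-1})^Tz}$ lies in $\mathcal S(\mathbb R^n)$, defines $f=\mathcal L_{S^{-1}}g$, and kills the support outside $B_R$ by shifting the contour of integration and letting $|B^{-1}\eta|\to\infty$ along the direction of $x$ --- which is precisely the standard proof of Paley--Wiener transplanted through the same chirp--dilation structure you isolate. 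Your approach is shorter and makes the mechanism transparent (the chirps pass through analyticity and the estimates without changing the exponential type; the dilation only rescales constants), at the cost of citing the classical theorem as a black box; the paper's buys self-containedness and explicit constants such as (\ref{eqPaleyWiener10}). One shared caveat, which you correctly flag and the paper handles identically by writing ``$f\in C^\infty$'' at the start of its forward direction: the ``only if'' implication requires $f$ smooth (the bounds for all $N$ force this anyway), so the theorem is really a statement about $C_c^\infty$ functions even though it is phrased for ``a function $f$''.
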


\begin{proof}
Suppose that $f(x)=0$ for $|x|>R$ and $f \in C^{\infty} (\mathbb{R}^n)$. Let
\begin{equation}
\widehat{f}_S(z)=\frac{1}{i^{n/2} \sqrt{\det B}} \int_{\mathbb{R}^n} f(x) e^{i \pi z \cdot DB^{-1} z+ i \pi x \cdot B^{-1} A x- 2 i \pi  x \cdot B^{-1} z} dx ~,
\label{eqPaleyWiener2}
\end{equation}
for $z =(z_1, \cdots, z_n) \in \mathbb{C}^n$.

Let us define the "covariant derivatives":
\begin{equation}
\left\{
\begin{array}{l}
\nabla_x = B \left[\partial_x + i \pi \left(B^{-1}A +  (B^{-1}A)^T \right)\cdot x \right]\\
\\
\widetilde{\nabla}_x = B \left[-\partial_x  + i \pi \left(B^{-1}A +  (B^{-1}A)^T \right)\cdot x  \right]
\end{array}
\right.
\label{eqPaleyWiener3}
\end{equation}
where $\partial_x= \left(\frac{\partial}{\partial x_1}, \cdots, \frac{\partial}{\partial x_n}\right)$. 

In the sequel we denote by $\nabla_{x_j}f$ the $j$-th component of $\nabla_x f$, for $j=1, \cdots, n$. We also define $\nabla_x^{\alpha}=\nabla_{x_1}^{\alpha_1} \cdots \nabla_{x_n}^{\alpha_n}$ for a given multi-index $\alpha =(\alpha_1, \cdots \alpha_n)$, $\alpha_j=0,1,2 \cdots$, with the understanding that $ \nabla_{x_j}^{0}f=f$.

From (\ref{eqPaleyWiener2}) we have:
\begin{equation}
\begin{array}{c}
\left|\widehat{f}_S(z)\right|\leq\frac{1}{ \sqrt{|\det B|}} \int_{|x| \leq R} |f(x)| e^{- \pi \xi \cdot DB^{-1} \eta - \pi \eta \cdot DB^{-1} \xi+ 2  \pi  x \cdot B^{-1} \eta} dx\\
\\
\leq \frac{\|f\|_{L^1}}{\sqrt{|\det B|}} e^{-2 \pi \xi \cdot \left(DB^{-1}\right)_+ \eta + 2 \pi R |B^{-1}\eta|}~,
\end{array}
\label{eqPaleyWiener4}
\end{equation}
for all $z= \xi + i \eta \in \mathbb{C}^n$. We may thus safely differentiate under the integral sign and we conclude that $\widehat{f}_S(z)$ is an entire function.

We then have with an integration by parts for $j=1, \cdots, n$: 
\begin{equation}
\begin{array}{c}
\frac{i^{-n/2}}{ \sqrt{\det B}} \int_{|x| \leq R} \left(\nabla_{x_j}^{\alpha_j} f(x) \right) \exp \left(i \pi z \cdot DB^{-1} z + i \pi x \cdot B^{-1} Ax - 2 i \pi x \cdot B^{-1} z \right) dx=\\
\\
= \frac{i^{-n/2}}{ \sqrt{\det B}} \int_{|x| \leq R}  f(x)  \widetilde{\nabla}_{x_j}^{\alpha_j} \exp \left(i \pi z \cdot DB^{-1} z + i \pi x \cdot B^{-1} Ax - 2 i \pi x \cdot B^{-1} z \right) dx=\\
\\
= (2 i \pi z_j)^{\alpha_j} \widehat{f}_S (z)~.
\end{array}
\label{eqPaleyWiener5}
\end{equation}
In particular, for $\alpha_j=N$:
\begin{equation}
\begin{array}{c}
z_j^N \widehat{f}_S (z)= \frac{i^{-n/2}}{(2 \pi i)^N  \sqrt{\det B}} \int_{|x| \leq R} \left(\nabla_{x_j}^{N} f(x) \right) \exp \left(i \pi z \cdot DB^{-1} z + \right.\\
\\
\left. + i \pi x \cdot B^{-1} Ax - 2 i \pi x \cdot B^{-1} z \right) dx
\end{array}
\label{eqPaleyWiener6}
\end{equation}
We conclude that, for $z= \xi +i \eta \in \mathbb{C}^n$:
\begin{equation}
|z_j|^2 | \widehat{f}_S (z)|^{2/N} \leq 
\frac{e^{-\frac{2 \pi}{N} \xi \cdot \left(DB^{-1}\right)_+ \eta +\frac{4 \pi}{N} R |B^{-1} \eta|}}{(2 \pi)^2 |\det B|^{1/N}} \| \nabla_{x_j}^{N} f\|_{L^1 (B_R)}^{2/N}~.
\label{eqPaleyWiener7}
\end{equation}
Summing over $j=1, \cdots, n$ and taking the square root yields:
\begin{equation}
|z|~| \widehat{f}_S (z)|^{1/N} \leq 
\frac{e^{-\frac{\pi}{N} \xi \cdot \left(DB^{-1}\right)_+ \eta +\frac{2 \pi}{N} R |B^{-1} \eta|}}{2 \pi |\det B|^{1/2N}} \sqrt{\sum_{j=1}^n\| \nabla_{x_j}^{N} f\|_{L^1 (B_R)}^{2/N}}~.
\label{eqPaleyWiener8}
\end{equation}
On the other hand, from (\ref{eqPaleyWiener4}) we have:
\begin{equation}
| \widehat{f}_S (z)|^{1/N} \leq 
\frac{e^{-\frac{\pi}{N} \xi \cdot \left(DB^{-1}\right)_+ \eta +\frac{2 \pi}{N} R |B^{-1} \eta|}}{|\det B|^{1/2N}} \| f\|_{L^1 (B_R)}^{1/N}~.
\label{eqPaleyWiener9}
\end{equation}
From (\ref{eqPaleyWiener8}) and (\ref{eqPaleyWiener9}), we obtain:
\begin{equation}
(1+|z|)^N| \widehat{f}_S (z)| \leq 
\frac{e^{-\pi \xi \cdot \left(DB^{-1}\right)_+ \eta +2 \pi R |B^{-1} \eta|}}{\sqrt{|\det B|}} \left[    \| f\|_{L^1 (B_R)}^{1/N} +\frac{1}{2 \pi} \sqrt{ \sum_{j=1}^n\|\nabla_{x_j}^{N} f\|_{L^1 (B_R)}^{2/N}} \right]~.
\label{eqPaleyWiener10}
\end{equation}
It follows that:
\begin{equation}
| \widehat{f}_S (z)| \leq C_N \frac{e^{-\pi \xi \cdot \left(DB^{-1}\right)_+ \eta +2 \pi R |B^{-1} \eta|}}{(1+|z|)^N}~,
\label{eqPaleyWiener11}
\end{equation}
where the constant $C_N$ depends on $f$, $N$, $R$, $n$ and on the matrices $A$ and $B$, but is independent of $z$.

Conversely, suppose that $g(z)$ is an entire function of $z=\xi+ i \eta \in \mathbb{C}^n$, such that (\ref{eqPaleyWiener1}) holds for all $z$.

We claim that, for each fixed $\eta \in \mathbb{R}^n$, the function
\begin{equation}
h_{\eta} (\xi)=g(\xi + i \eta) e^{- i \pi (\xi+ i \eta) (DB^{-1})^T (\xi + i \eta)}= g(z) e^{- i \pi z (DB^{-1})^T z}
\label{eqPaleyWiener12}
\end{equation}
of $\xi \in \mathbb{R}^n$ is in $\mathcal{S} (\mathbb{R}^n)$. Let us prove this claim.

For any $\alpha = (\alpha_1, \cdots, \alpha_n),~ \beta = (\beta_1, \cdots, \beta_n) \in \mathbb{N}_0^n$, we have by Cauchy's formula:
\begin{equation}
\begin{array}{c}
\partial_z^{\alpha} \left(g(z) e^{- i \pi z \cdot(DB^{-1})^Tz} \right)=\\
\\
= \frac{\alpha !}{(2 i \pi)^{|\alpha|}} \oint_{\gamma_1} \cdots \oint_{\gamma_n} \frac{g(z^{\prime})}{(z_1^{\prime}-z_1)^{\alpha_1+1} \cdots (z_n^{\prime}-z_n)^{\alpha_n+1}} d z_n^{\prime} \cdots d z_1^{\prime}~,
\end{array}
\label{eqPaleyWiener13}
\end{equation}
where as usual $\partial_z^{\alpha} =\partial_{z_1}^{\alpha_1} \cdots \partial_{z_n}^{\alpha_n} $, and $\gamma_1, \cdots, \gamma_n$ are certain contours enclosing $z_1, \cdots, z_n$, respectively. 

Without loss of generality we may assume that $\gamma_j$ is a circle of radius $\frac{1}{\sqrt{n}}$ centered at $z_j$, so that $|z_j^{\prime}-z_j|= \frac{1}{\sqrt{n}}$ and $|z^{\prime}-z|=1$.

From (\ref{eqPaleyWiener1}), we then get:
\begin{equation}
\begin{array}{c}
  \left|\xi^{\beta} \partial_z^{\alpha} \left(g(z) e^{-i \pi z \cdot (DB^{-1})^Tz}  \right)\right| \leq \\
 \\
 \leq \frac{\alpha! C_N}{(2\pi)^{|\alpha|}} \oint_{\gamma_1} \cdots \oint_{\gamma_n} \frac{|\xi|^{\beta} e^{2 \pi R |B^{-1} \eta^{\prime}|}}{|z_1^{\prime}-z_1|^{\alpha_1 +1} \cdots |z_n^{\prime}-z_n|^{\alpha_n +1} (1+ |z^{\prime}|)^N} d z_1^{\prime} \cdots d z_n^{\prime}~, \end{array}
 \label{eqPaleyWiener14}
\end{equation}
where we set $z^{\prime}=(z_1^{\prime}, \cdots, z_n^{\prime}) \in \mathbb{C}^n$, $z_j^{\prime}=\xi_j^{\prime}+i \eta_j^{\prime}$.

Let us change variables to $\zeta_j= z_j^{\prime}-z_j$, where $|\zeta_j|=\frac{1}{\sqrt{n}}$. We obtain:
\begin{equation}
\begin{array}{c}
  \left|\xi^{\beta} \partial_z^{\alpha} g(z) e^{-i \pi z \cdot (DB^{-1})^Tz}  \right| \leq \\
 \\
 \leq \frac{\alpha! C_N (\sqrt{n})^{|\alpha|+n}}{(2\pi)^{|\alpha|}} \oint_{C_{1/\sqrt{n}}} \cdots \oint_{C_{1/\sqrt{n}}} \frac{|\xi|^{\beta} e^{2 \pi R \left( |B^{-1} \eta| + \|B^{-1}\|_{op}\right)}}{ (1+ |z+ \zeta|)^N} d \zeta_1 \cdots d \zeta_n~, 
\end{array}
 \label{eqPaleyWiener15}
\end{equation}
where $C_{1/\sqrt{n}}$ is the circle of radius $\frac{1}{\sqrt{n}}$ centered at the origin and $\|\cdot \|_{op}$ denotes the Frobenius norm of a matrix.

On the other hand, if we set $\zeta=\zeta_R+i \zeta_I$, $\zeta_R,\zeta_I \in \mathbb{R}^n$:
\begin{equation}
|\xi|=|\xi+\zeta_R-\zeta_R| \leq |\xi+\zeta_R|+|\zeta_R| \leq |z+\zeta|+|\zeta|   =|z+\zeta|+1~.
 \label{eqPaleyWiener16}
\end{equation}
Thus:
\begin{equation}
|\xi|^{|\beta|} \leq (1+ |z+ \zeta|)^{|\beta|}~.
 \label{eqPaleyWiener17}
\end{equation}
From (\ref{eqPaleyWiener15},\ref{eqPaleyWiener17}):
\begin{equation}
\begin{array}{c}
  \left|\xi^{\beta} \partial_z^{\alpha} \left(g(z) e^{-i \pi z \cdot (DB^{-1})^Tz} \right) \right| \leq \\
 \\
 \leq \frac{\alpha! C_N (\sqrt{n})^{|\alpha|+n}}{(2\pi)^{|\alpha|}}  e^{2 \pi R \left( |B^{-1} \eta| + \|B^{-1}\|_{op}\right)} \times \\
 \\
 \times \oint_{C_{1/\sqrt{n}}} \cdots \oint_{C_{1/\sqrt{n}}} \frac{1}{ (1+ |z+ \zeta|)^{N-|\beta|}} d \zeta_1 \cdots d \zeta_n~, 
\end{array}
 \label{eqPaleyWiener18}
\end{equation}
Since $N \in \mathbb{N}_0$ is arbitrary, we may choose it sufficiently large so that $N \geq |\beta|$. It follows that:
\begin{equation}
\begin{array}{c}
\oint_{C_{1/\sqrt{n}}} \cdots \oint_{C_{1/\sqrt{n}}} \frac{1}{ (1+ |z+ \zeta|)^{N-|\beta|}} d \zeta_1 \cdots d \zeta_n \leq \\
\\
\leq \oint_{C_{1/\sqrt{n}}} \cdots \oint_{C_{1/\sqrt{n}}} 1 d \zeta_1 \cdots d \zeta_n = 
\left(\frac{2 \pi}{\sqrt{n}}\right)^n~,
\end{array}
\label{eqPaleyWiener19}
\end{equation}
and:
\begin{equation}
  \left|\xi^{\beta} \partial_z^{\alpha} \left(g(z) e^{-i \pi z \cdot (DB^{-1})^Tz} \right) \right| \leq  \frac{\alpha! C_N (\sqrt{n})^{|\alpha|}}{(2\pi)^{|\alpha|-n}}  e^{2 \pi R \left( |B^{-1} \eta| + \|B^{-1}\|_{op}\right)} ~, 
 \label{eqPaleyWiener20}
\end{equation}
which proves that $h_{\eta} (\xi) \in \mathcal{S} (R^n)$, as claimed.

Let 
\begin{equation}
\begin{array}{c}
f(x)= \left(\mathcal{L}_{S^{-1}} g \right) (x)=\\
\\
=\frac{i^{-n/2}}{\sqrt{\det B}} \int_{\mathbb{R}^n} g(\xi) e^{- i \pi x \cdot (B^{-1}A)^T x - i  \pi \xi \cdot (DB^{-1})^T \xi +  2 i \pi \xi \cdot (B^{-1})^T x} d \xi ~.
\end{array}
\label{eqPaleyWiener21}
\end{equation}
Notice that we may rewrite $f(x)$ as:
\begin{equation}
\begin{array}{c}
f(x) =\frac{e^{- i \pi x \cdot (B^{-1}A)^T x}}{i^{n/2}\sqrt{\det B}} \int_{\mathbb{R}^n} h_0 (\xi) e^{  2 i \pi \xi \cdot (B^{-1})^T x} d \xi =\\
\\
= \frac{e^{- i \pi x \cdot (B^{-1}A)^T x}}{i^{n/2}\sqrt{\det B}} \widehat{h}_0\left(- (B^{-1})^T x \right) ~,
\end{array}
\label{eqPaleyWiener22}
\end{equation}
where $h_0(\xi)$ is given by (\ref{eqPaleyWiener12}) for fixed $\eta=0$.

Since the Fourier transform is an isomorphism in $\mathcal{S} (\mathbb{R}^n)$, we conclude that $f \in \mathcal{S} (\mathbb{R}^n)$ and from (\ref{eqPaleyWiener21}) it follows that $g= \widehat{f}_S$. It remains to prove that the support of $f$ belongs to the ball of radius $R$.

Because of estimates (\ref{eqPaleyWiener11}) and Cauchy's theorem, we can shift the region of integration in (\ref{eqPaleyWiener21}) and obtain:
\begin{equation}
\begin{array}{c}
f(x)= \frac{i^{-n/2}}{ \sqrt{\det B}} \int_{\mathbb{R}^n} g(\xi + i \eta) \times \\
\\
\times e^{-i \pi x \cdot (B^{-1}A)^T x - i \pi (\xi+i \eta) (DB^{-1})^T(\xi+ i \eta) +2 i \pi (\xi+i \eta) (B^T)^{-1} x} d \xi ~. 
\end{array}
\label{eqPaleyWiener23}
\end{equation}
Consequently from (\ref{eqPaleyWiener11}):
\begin{equation}
\begin{array}{c}
|f(x)| \leq \frac{C_N e^{2 \pi R |B^{-1} \eta|-2 \pi \eta \cdot (B^{-1})^T x}}{ \sqrt{|\det B|}} \int_{\mathbb{R}^n} \frac{1}{(1+ |\xi + i \eta|)^N}d \xi\\
\\
\leq  \frac{C_N e^{2 \pi R |B^{-1} \eta|-2 \pi \eta \cdot (B^{-1})^T x}}{ \sqrt{|\det B|}} \int_{\mathbb{R}^n} \frac{1}{(1+ |\xi |)^N}d \xi~. 
\end{array}
\label{eqPaleyWiener24}
\end{equation}
We may choose $N$ large enough so that the integral on the right-hand side is convergent and we get:
\begin{equation}
|f(x)| \leq D_N e^{2 \pi R |B^{-1} \eta|-2 \pi \eta \cdot (B^{-1})^T x}~,
\label{eqPaleyWiener25}
\end{equation}
for some contant $D_N>0$.

Let $\sigma=B^{-1}\eta $. Since $\eta$ (and hence $\sigma$) is arbitrary, we may choose it so that $\sigma \neq 0 $ points in the same direction as $x$: $\sigma \cdot x= |\sigma|~|x|$.

Thus:
\begin{equation}
2 \pi R |B^{-1} \eta|-2 \pi \eta \cdot (B^{-1})^T x=
2 \pi R |\sigma|-2 \pi \sigma \cdot x = 2 \pi |\sigma| \left(R- |x|\right)~.
\label{eqPaleyWiener26}
\end{equation}
If we send $|\sigma| \to \infty$, we conclude from (\ref{eqPaleyWiener25}) that $f(x)$ vanishes for $|x| >R$.
\end{proof}

\begin{remark}\label{RemarkPaleyWiener}
Suppose that an entire function $g(z)$ is such that for each $N \in \mathbb{N}$, there is a constant $C_N >0$, such that:
\begin{equation}
|g(z)| \leq C_N \frac{e^{ 2 \pi R | \eta|}}{\left(1+ |z|\right)^N}~, 
\label{eqPaleyWiener27}
\end{equation}
for all $z= \xi + i \eta \in \mathbb{C}^n$.

Then, by the Paley-Wiener theorem, there exists a function $f$ with support contained in the ball $B_R$ of radius $R$, such that $g(\xi)= \widehat{f} (\xi)$.

Alternatively, suppose that there exists some $N=N_0 \in \mathbb{N}$ such that (\ref{eqPaleyWiener27}) does not hold, but (\ref{eqPaleyWiener1}) is valid for some matrices $B,D$. Then Theorem \ref{TheoremPaleyWiener1} implies that there exists a function $f$ such that $g(\xi)= \widehat{f}_S (\xi)$ with support contained in the ball $B_R$. 

We can understand this in the following geometric terms. Let us consider the $n=1$ case for simplicity. Suppose that $f$ is a function with support $\text{Supp} (f) \subset \left[-R,R \right]$. Then the associated Wigner function $W_{\sigma}f$ has support contained in the infinite strip $\left[-R,R \right] \times \mathbb{R}$. The Fourier transform $\widehat{f}$ extends to an entire function on $\mathbb{C}^2$ which satisfies (\ref{eqPaleyWiener27}).

Now suppose that we perform some LCT $\widehat{S}_{\theta}$ which amounts to a counter clockwise rotation in the time-frequency plane by an angle $0 < \theta <\frac{\pi}{2}$. The support of the corresponding Wigner distribution $W_{\sigma}\left( \widehat{S}_{\theta}f\right)(z)=W_{\sigma}f(S_{\theta}^{-1} z) $ is now contained between the two straight lines $x \cos \theta + \xi \sin \theta = \pm R$. Moreover the LCT $\left(\widehat{S}_{\theta} f \right) (\xi)$ extends to an entire function on $\mathbb{C}^2$ which satisfies an estimate of the form (\ref{eqPaleyWiener1}), but not (\ref{eqPaleyWiener27}).  
\end{remark} 

\section*{Acknowledgements}

M. de Gosson has been funded by the grant P 33447 of the Austrian Research Foundation FWF.

\pagebreak

****************************************************************

\textbf{Author's addresses:}

\begin{itemize}
\item \textbf{Nuno Costa Dias and Jo\~ao Nuno Prata: }Escola Superior N\'autica Infante D. Henrique. Av.
Eng. Bonneville Franco, 2770-058 Pa\c{c}o d'Arcos, Portugal and Grupo de F\'{\i}sica
Matem\'{a}tica, Departamento de Matem\'atica, Faculdade de Cí\^encias, Universidade de Lisboa, Campo Grande, Edif\'{\i}cio C6, 1749-016 Lisboa, Portugal

\item \textbf{Maurice A. de Gosson:} Universit\"{a}t Wien, Fakult\"{a}t
f\"{u}r Mathematik--NuHAG, Nordbergstrasse 15, 1090 Vienna, Austria
\end{itemize}

 \end{document}